\numberwithin{equation}{section}
\numberwithin{figure}{section}
\theoremstyle{definition}
\theoremstyle{remark}
\numberwithin{equation}{section}
\begin{document}

\title[Supersingular curves on Picard modular surfaces]{Supersingular curves on Picard modular surfaces modulo an inert
prime}

\author{Ehud de Shalit}

\address{Hebrew University, Jerusalem, Israel}

\email{deshalit@math.huji.ac.il.}

\author{Eyal Z. Goren}

\address{McGill University, Montreal, Canada}

\email{eyal.goren@mcgill.ca}

\date{February 22, 2015}
\begin{abstract}
We study the supersingular curves on Picard modular surfaces modulo
a prime $p$ which is inert in the underlying quadratic imaginary
field. We analyze the automorphic vector bundles in characteristic
$p,$ and as an application derive a formula relating the number of
irreducible components in the supersingular locus to the second Chern
class of the surface. 
\end{abstract}

\maketitle
\bigskip{}
\,

The characteristic $p$ fibers of Shimura varieties of PEL type admit
special subvarieties over which the abelian varieties which they parametrize
become supersingular. These special subvarieties and the automorphic
vector bundles over them have become the subject of extensive research
in recent years. The purpose of this note is to analyze one of the
simplest examples beyond Shimura curves, namely Picard modular surfaces,
at a prime $p$ which is inert in the underlying quadratic imaginary
field. The supersingular locus on these surfaces has been studied
in depth by Bültel, Vollaard and Wedhorn ({[}6{]},{[}31{]},{[}32{]})
and our debt to these authors should be obvious. It consists of a
collection of Fermat curves of degree $p+1,$ intersecting transversally
at the \emph{superspecial} points. We complement their work by analyzing
the automorphic vector bundles in characteristic $p,$ in relation
to the supersingular strata. Although some of our results (e.g. the
construction of a Hasse invariant, see {[}12{]}) have been recently
generalized to much larger classes of Shimura varieties, focusing
on this simple example allows us to give more details. As an example,
we derive the following theorem (\ref{ss curves}).

\begin{theorem} Let $\mathcal{K}$ be a quadratic imaginary field
and let $\bar{S}$ be the Picard modular surface of level $N\ge3$
associated with $\mathcal{K}$ (for a precise definition see the text
below). Let $p$ be a prime which is inert in $\mathcal{K}$ and relatively
prime to $2N.$ Then the number of irreducible components in the supersingular
locus of $\bar{S}\func{mod}p$ is $c_{2}(\bar{S})/3,$ where $c_{2}(\bar{S})$
is the second Chern class of $\bar{S}.$ \end{theorem}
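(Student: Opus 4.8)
\emph{The plan.} Since $c_2(\bar S)$ is the degree of the second Chern class of the tangent bundle, hence the $\ell$-adic Euler characteristic of any geometric fibre of the (smooth proper) integral model of $\bar S$, I would compute it on the characteristic $p$ fibre, stratified as $\bar S\bmod p=\bar S^{\mathrm{ord}}\sqcup\bar S_{\mathrm{ss}}$ into the $\mu$-ordinary locus and its complement, the supersingular locus. Then $c_2(\bar S)=e(\bar S^{\mathrm{ord}})+e(\bar S_{\mathrm{ss}})$, where the supersingular term is combinatorial and the ordinary term --- equivalently, the asserted identity $c_2(\bar S)=3N$ --- is to be extracted from the intersection theory of the automorphic bundles on $\bar S\bmod p$ worked out in the earlier sections; morally the theorem is a proportionality between two characteristic $p$ invariants.

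\emph{The supersingular term.} By B\"ultel--Vollaard--Wedhorn and the earlier sections, $\bar S_{\mathrm{ss}}=C_1\cup\cdots\cup C_N$ with each $C_i$ a smooth Fermat (Hermitian) curve of degree $p+1$, hence of genus $g_0=p(p-1)/2$; the $C_i$ meet one another transversally, exactly along the superspecial points, and lie entirely in the interior $\bar S\setminus\partial\bar S$. The incidence is governed by the Bruhat--Tits tree of $\mathrm{PU}_3(\mathbb{Q}_p)$, whose two classes of vertices have valences $p^3+1$ and $p+1$: each $C_i$ contains precisely $p^3+1$ superspecial points (its $\mathbb{F}_{p^2}$-rational points), and each superspecial point lies on precisely $p+1$ of the $C_i$. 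Thus the number of superspecial points is $N(p^3+1)/(p+1)=N(p^2-p+1)$, and inclusion--exclusion over $\bigcup_i C_i$ yields $e(\bar S_{\mathrm{ss}})=N(2-p^2+p)-p\,N(p^2-p+1)=N(2-p^3)$. Hence the theorem is equivalent to $e(\bar S^{\mathrm{ord}})=N(p^3+1)$, i.e.\ to $c_2(\bar S)=3N$.

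\emph{Extracting $c_2$.} Kodaira--Spencer identifies $\Omega^1_{\bar S}(\log\partial\bar S)$ with $\mathcal{P}\otimes\mathcal{L}$, where $\mathcal{P}$ (rank $2$) and $\mathcal{L}$ (rank $1$) are the two pieces of the Hodge bundle; as $\partial\bar S$ is a disjoint union of elliptic curves, $e(\partial\bar S)=0$, so $c_2(\bar S)=c_2(\Omega^1_{\bar S}(\log\partial\bar S))=c_2(\mathcal{P})+c_1(\mathcal{P})\cdot c_1(\mathcal{L})+c_1(\mathcal{L})^2$ by the Whitney formula, while $K_{\bar S}+\partial\bar S=\det\mathcal{P}\otimes\mathcal{L}^{\otimes 2}$. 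The decisive point in characteristic $p$ is that $\mathcal{P}$ acquires a canonical sub-line-bundle: since $p$ is inert, Frobenius interchanges the two pieces and the Verschiebung induces a map $\mathcal{P}\to\mathcal{L}^{(p)}$ whose kernel $\mathcal{K}$ sits in $0\to\mathcal{K}\to\mathcal{P}\to\mathcal{L}^{(p)}\otimes\mathcal{O}_{\bar S}(-Z)\to 0$ for an effective divisor $Z$ supported on $\bar S_{\mathrm{ss}}$; as $c_1(\mathcal{L}^{(p)})=p\,c_1(\mathcal{L})$, this turns $c_2(\mathcal{P})$, and with it all of $c_2(\bar S)$, into a polynomial in $c_1(\mathcal{L})$, $c_1(\det\mathcal{P})$, $[Z]$ and $\sum_i[C_i]$. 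The (partial) Hasse invariants constructed in the body are sections of explicit automorphic line bundles whose divisors are supported on $\bar S_{\mathrm{ss}}$ with prescribed multiplicities, giving relations $c_1(\text{automorphic line bundle})\equiv(\text{integer})\cdot\sum_i[C_i]$ in $\mathrm{NS}(\bar S)$. Substituting these, evaluating $\sum_{i<j}C_i\cdot C_j=N(p^2-p+1)\binom{p+1}{2}$ combinatorially and each $C_i^2$ by adjunction $2g_0-2=(K_{\bar S}+C_i)\cdot C_i$ (with $K_{\bar S}$ given through $\mathcal{P}$, $\mathcal{L}$ and the degrees of $\mathcal{P}|_{C_i}$, $\mathcal{L}|_{C_i}$ computed earlier), reduces $c_2(\bar S)$ to an explicit rational multiple of $N$, which must come out to be $3$. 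Independently, Hirzebruch--Mumford proportionality for the complex $2$-ball (compact dual $\mathbb{P}^2$, Chern ratio $c_1^2:c_2=3:1$) gives $c_2(\bar S)=3\,\mathrm{vol}(\Gamma)$, and the same bookkeeping --- or, more conceptually, the mass formula together with the neatness of level $N\ge 3$ --- identifies $\mathrm{vol}(\Gamma)$ with the number $N$ of hyperspecial vertices in the $\Gamma$-quotient of the tree.

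\emph{The main obstacle.} The topological and combinatorial reductions above are routine given the cited structure theory; the substance lies in the characteristic $p$ data consumed in the last step --- the exact Verschiebung (conjugate) filtration on $\mathcal{P}$, the precise automorphic line bundles of which the (partial) Hasse invariants are sections and their multiplicities along $\bar S_{\mathrm{ss}}$, and the degrees of $\mathcal{P}$ and $\mathcal{L}$ restricted to a supersingular Fermat curve. These are exactly the facts about automorphic vector bundles in characteristic $p$ established in the earlier sections; once they are in hand the only remaining work is numerical, namely to check that the factors collapse to the clean ratio $3$ (equivalently, that $\mathrm{vol}(\Gamma)=N$). One should also record the two auxiliary facts used above --- that the $C_i$ avoid $\partial\bar S$, and that $\partial\bar S$ is a union of elliptic curves --- both standard for toroidal compactifications of Picard modular surfaces.
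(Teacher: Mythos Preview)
Your approach diverges from the paper's in an interesting way, but the key exact sequence is wrong. The paper first proves $n=c_1(\mathcal{L})^2$ by intersection theory on $\bar S$: the secondary Hasse invariant $h_{ssp}\in H^0(S_{ss},\mathcal{L}^{p^3+1})$ gives $\deg(\mathcal{L}|_{Z_i})=p^2-1$; adjunction together with $\mathcal{O}(K_{\bar S}+C)\simeq\mathcal{L}^3$ then yields $Z_i^2=-2p^2-p+1$; and the incidence combinatorics plus $\mathcal{O}(S_{ss})\simeq\mathcal{L}^{p^2-1}$ give $(S_{ss}.S_{ss})=n(p^2-1)^2=(p^2-1)^2c_1(\mathcal{L})^2$. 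It then invokes Holzapfel's proportionality $(K_{\bar S}+C)^2=3c_2$---a transcendental fact about ball quotients, essentially your Hirzebruch--Mumford step---to conclude $3n=c_2$, and finally descends from large $N$ to $N\ge 3$ via the \'etale cover. Your idea of computing $c_2$ directly in characteristic $p$ through $\Omega^1_{\bar S}(\log C)\simeq\mathcal{P}\otimes\mathcal{L}$ and a Verschiebung filtration of $\mathcal{P}$ would, if it worked, replace Holzapfel's input by a purely algebraic one. (Your preliminary Euler-characteristic stratification is correct but only restates the goal as $c_2=3n$.)

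The error is that $V_{\mathcal{P}}:\mathcal{P}\to\mathcal{L}^{(p)}$ is surjective on all of $\bar S\setminus S_{ssp}$ and drops to rank $0$ only at the superspecial \emph{points}; its cokernel is a skyscraper on the $0$-dimensional scheme $S_{ssp}$, not $\mathcal{L}^{(p)}/\mathcal{L}^{(p)}(-Z)$ for a divisor $Z$ supported on $S_{ss}$. (Indeed the paper proves that the filtration on $\mathcal{P}$ \emph{cannot} be extended across $S_{ssp}$.) With the correct four-term sequence $0\to\mathcal{K}\to\mathcal{P}\to\mathcal{L}^{(p)}\to\mathcal{O}_{S_{ssp}}\to0$ (length $1$ at each point, by the local model $V(e_1)=uf_3^{(p)}$, $V(e_2)=vf_3^{(p)}$) one gets $c_2(\mathcal{P})=p(1-p)\,c_1(\mathcal{L})^2+\#S_{ssp}$, hence $c_2(\bar S)=(2+p-p^2)\,c_1(\mathcal{L})^2+\#S_{ssp}$. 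Since $\#S_{ssp}=n(p^2-p+1)$, this does yield $c_2=3n$ \emph{once} you know $c_1(\mathcal{L})^2=n$---but that is exactly the paper's intersection computation, which you have not bypassed. So, repaired, your route is a genuine alternative to the Holzapfel step only, not to the rest of the argument. As for your fallback, identifying $\mathrm{vol}(\Gamma)$ directly with $n$ via a mass formula is precisely what the introduction warns cannot be extracted from the Rapoport--Zink double-coset parametrization alone.
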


Thanks to results of Holzapfel, this number is expressed in terms
of the value of the $L$-function $L(s,(D_{\mathcal{K}}/\cdot))$
at $s=3,$ see Theorem \ref{Holz}.

Although the uniformization of the supersingular locus by Rapoport-Zink
spaces yields a group-theoretic parametrization of the irreducible
components by a certain double coset space, this parametrization in
itself is not sufficient to yield the theorem. Our proof goes along
different lines, invoking intersection theory on $\bar{S}.$

Note that the number of irreducible components comes out to be independent
of $p.$ A similar result was obtained in {[}2{]} for Hilbert modular
surfaces. This is different from the situation with the Siegel modular
variety $A_{g}.$ We are still lacking a conceptual understanding
of this independence of $p$.

\medskip{}

The first section introduces notation and background. The second contains
most of the analysis modulo $p.$ The Picard surface modulo $p$ has
3 strata: the $\mu$-ordinary (generic) stratum $\bar{S}_{\mu}$,
the general supersingular locus $S_{gss}$ and the superspecial points
$S_{ssp}$. There are two basic automorphic vector bundles to consider,
a rank 2 bundle $\mathcal{P}$ and a line bundle $\mathcal{L}.$ Of
particular importance are the Verschiebung homomorphisms $V_{\mathcal{P}}:\mathcal{P}\rightarrow\mathcal{L}^{(p)}$
and $V_{\mathcal{L}}:\mathcal{L}\rightarrow\mathcal{P}^{(p)}.$ It
turns out that outside the superspecial points $V_{\mathcal{P}}$
and $V_{\mathcal{L}}$ are both of rank 1, but that the supersingular
locus is \emph{defined} by the equation $V_{\mathcal{P}}^{(p)}\circ V_{\mathcal{L}}=0.$
The Hasse invariant $h_{\bar{\Sigma}}=$ $V_{\mathcal{P}}^{(p)}\circ V_{\mathcal{L}}$
is a canonical global section of $\mathcal{L}^{p^{2}-1}$ whose divisor
is $S_{ss}=S_{gss}\cup S_{ssp}.$ On $S_{ss}$, in turn, we construct
a canonical section $h_{ssp}$ of $\mathcal{L}^{p^{3}+1},$ which
vanishes precisely at the superspecial points $S_{ssp}$ (to a high
order). This secondary Hasse invariant is related to more general
work of Boxer {[}5{]}.

In the last section we use the information carried by $h_{ssp},$
together with some intersection theoretic computations, to prove the
above theorem.

We would like to thank the referee for a thorough reading of the manuscript.

\section{Background}

\subsection{Notation}

We recall some classical facts about unitary groups in three variables
and set up some notation.

\subsubsection{The quadratic imaginary field}

Let $\mathcal{K}$ be an imaginary quadratic field, contained in $\Bbb{C}.$
We denote by $\Sigma:\mathcal{K}\hookrightarrow\Bbb{C}$ the inclusion
and by $\bar{\Sigma}:\mathcal{K}\hookrightarrow\Bbb{C}$ its complex
conjugate. We use the following notation:
\begin{itemize}
\item $d_{\mathcal{K}}$ - the square free integer such that $\mathcal{K}=\Bbb{Q}(\sqrt{d_{\mathcal{K}}}).$
\item $D_{\mathcal{K}}$ - the discriminant of $\mathcal{K}$, equal to
$d_{\mathcal{K}}$ if $d_{\mathcal{K}}\equiv1\func{mod}4$ and $4d_{\mathcal{K}}$
if $d_{\mathcal{K}}\equiv2,3\func{mod}4.$
\item $\delta_{\mathcal{K}}=\sqrt{\mathcal{D}_{\mathcal{K}}}$ - the square
root with positive imaginary part, a generator of the different of
$\mathcal{K},$ sometimes simply denoted $\delta.$
\item $\omega_{\mathcal{K}}=(1+\sqrt{d_{\mathcal{K}}})/2$ if $d_{\mathcal{K}}\equiv1\func{mod}4,$
otherwise $\omega_{\mathcal{K}}=\sqrt{d_{\mathcal{K}}},$ so that
$\mathcal{O}_{\mathcal{K}}=\Bbb{Z}+\Bbb{Z}\omega_{\mathcal{K}}.$
\item $\bar{a}$ - the complex conjugate of $a\in\mathcal{K}.$
\item $\func{Im}_{\delta}(a)=(a-\bar{a})/\delta$, for $a\in\mathcal{K}.$ 
\end{itemize}
We fix an integer $N\ge3$ (the ``tame level'') and let $R_{0}=\mathcal{O}_{\mathcal{K}}[1/2d_{\mathcal{K}}N].$
This is our \emph{base ring}. If $R$ is any $R_{0}$-algebra and
$M$ is any $R$-module with $\mathcal{O}_{\mathcal{K}}$-action,
then $M$ becomes an $\mathcal{O}_{\mathcal{K}}\otimes R$-module
and we have a canonical \emph{type decomposition} 
\begin{equation}
M=M(\Sigma)\oplus M(\bar{\Sigma})\label{type}
\end{equation}
where $M(\Sigma)=e_{\Sigma}M$ and $M(\bar{\Sigma})=e_{\bar{\Sigma}}M,$
and where the idempotents $e_{\Sigma}$ and $e_{\bar{\Sigma}}$ are
defined by 
\begin{equation}
e_{\Sigma}=\frac{1\otimes1}{2}+\frac{\delta\otimes\delta^{-1}}{2},\,\,\,\,e_{\bar{\Sigma}}=\frac{1\otimes1}{2}-\frac{\delta\otimes\delta^{-1}}{2}.
\end{equation}
Then $M(\Sigma)$ (resp. $M(\bar{\Sigma})$) is the part of $M$ on
which $\mathcal{O}_{\mathcal{K}}$ acts via $\Sigma$ (resp. $\bar{\Sigma}$).
The same notation will be used for sheaves of modules on $R$-schemes,
endowed with an $\mathcal{O}_{\mathcal{K}}$ action. If $M$ is locally
free, we say that it has \emph{type }$(p,q)$ if $M(\Sigma)$ is of
rank $p$ and $M(\bar{\Sigma})$ is of rank $q.$

\subsubsection{The unitary group}

Let $V=\mathcal{K}^{3}$ and endow it with the hermitian pairing 
\begin{equation}
(u,v)=\,^{t}\bar{u}\left(\begin{array}{lll}
 &  & \delta^{-1}\\
 & 1\\
-\delta^{-1}
\end{array}\right)v.
\end{equation}
We identify $V_{\Bbb{R}}$ with $\Bbb{C}^{3}$ ($\mathcal{K}$ acting
via the natural inclusion $\Sigma$). It then becomes a hermitian
space of signature $(2,1).$ Conversely, any $3$-dimensional hermitian
space over $\mathcal{K}$ whose signature at the infinite place is
$(2,1)$ is isomorphic to $V$ after rescaling the hermitian form
by a positive rational number.

Let 
\begin{equation}
\mathbf{G=U}(V,(,))
\end{equation}
be the unitary group of $V,$ regarded as an algebraic group over
$\Bbb{Q}.$ For any $\Bbb{Q}$-algebra $A$ we have 
\begin{equation}
\mathbf{G}(A)=\left\{ g\in GL_{3}(A\otimes\mathcal{K})|\,\,(gu,gv)=(u,v)\,\,\forall u,v\in V_{A}\right\} .
\end{equation}

We write $G=\mathbf{G}(\Bbb{Q}),\,G_{\infty}=\mathbf{G}(\Bbb{R})$
and $G_{p}=\mathbf{G}(\Bbb{Q}_{p}).$

We also introduce an alternating $\Bbb{Q}$-linear pairing $\left\langle ,\right\rangle :V\times V\rightarrow\Bbb{Q}$
defined by $\left\langle u,v\right\rangle =\func{Im}_{\delta}(u,v).$
We then have the formulae 
\begin{equation}
\left\langle au,v\right\rangle =\left\langle u,\bar{a}v\right\rangle ,\,\,\,\,\,\,2(u,v)=\left\langle u,\delta v\right\rangle +\delta\left\langle u,v\right\rangle .
\end{equation}
We call $\left\langle ,\right\rangle $ the \emph{polarization form}.

\subsubsection{The hermitian symmetric domain}

The group $G_{\infty}$ acts on $\Bbb{P}_{\Bbb{C}}^{2}=\Bbb{P}(V_{\Bbb{R}})$
by projective linear transformations, and preserves the open subdomain
$\frak{X}$ of negative definite lines (in the metric $(,)$). This
$\frak{X}$ is biholomorphic to the open unit ball in $\Bbb{C}^{2},$
and $G_{\infty}$ acts on it transitively. Every negative definite
line is represented by a unique vector $^{t}(z,u,1),$ and such a
vector represents a negative definite line if and only if 
\begin{equation}
\lambda(z,u)\overset{\text{def}}{=}\vspace{0.01in}\mathrm{Im}_{\delta}(z)-u\bar{u}>0.
\end{equation}
One refers to the realization of $\frak{X}$ as the set of points
$(z,u)\in\Bbb{C}^{2}$ satisfying this inequality as a \emph{Siegel
domain of the second kind}. It is convenient to think of the point
$x_{0}=$ $(\delta/2,0)$ as the ``center'' of $\frak{X.}$

If we let $K_{\infty}$ be the stabilizer of $x_{0}$ in $G_{\infty}$,
then $K_{\infty}$ is compact and isomorphic to $U(2)\times U(1)$.
Since $G_{\infty}$ acts transitively on $\frak{X},$ we may identify
$\frak{X}$ with $G_{\infty}/K_{\infty}.$\\

\subsection{Picard modular surfaces over $\Bbb{C}$}

We next recall some classical facts about Picard modular surfaces.

\subsubsection{Lattices and their arithmetic groups}

Let $L\subset V$ be the lattice 
\begin{equation}
L=Span_{\mathcal{O}_{\mathcal{K}}}\{\delta e_{1},e_{2},e_{3}\}.
\end{equation}
This $L$ is \emph{self-dual} in the sense that 
\begin{equation}
L=\left\{ u\in V|\,\left\langle u,v\right\rangle \in\Bbb{Z\,\,\forall}v\in L\right\} .
\end{equation}
Equivalently, $L$ is its own $\mathcal{O}_{\mathcal{K}}$-dual with
respect to the hermitian pairing $(,).$

With $N\ge3$ as before let 
\begin{equation}
\Gamma=\Gamma(N)=\left\{ \gamma\in G|\,\gamma L=L\,\text{and }\gamma(u)\equiv u\func{mod}NL\,\,\forall u\in L\right\} .
\end{equation}
Then $\Gamma$ is a discrete torsion-free subgroup of $G$ which acts
freely and faithfully on $\frak{X}.$ As $N\ge3,$ it can be seen
that $\Gamma\subset SU(V,(,)),$ i.e. $\det\gamma=1$ for all $\gamma\in\Gamma.$

\subsubsection{The cusps}

Let $\mathcal{C}$ be the set of isotropic lines in $V.$ Equivalently,
$\mathcal{C}$ may be described as the set of vectors $^{t}(z,u,1)$
with $z,u\in\mathcal{K}$ and $\lambda(z,u)=0,$ together with the
unique cusp at infinity $c_{\infty}=\,^{t}(1,0,0),$ or simply as
the $\mathcal{K}$-rational points on $\partial\frak{X}.$ The group
$\Gamma$ acts on $\mathcal{C}$ and its orbits $\mathcal{C}_{\Gamma}=\Gamma\backslash\mathcal{C}$
are called the \emph{cusps} of $\Gamma.$ The set $\mathcal{C}_{\Gamma}$
is finite.

\subsubsection{Picard modular surfaces and their smooth compactifications}

We denote by $X_{\Gamma}$ the complex surface $\Gamma\backslash\frak{X.}$
Since the action of $\Gamma$ on $\frak{X}$ is free, $X_{\Gamma}$
is smooth. For the following theorem see, for example, {[}7{]}.

\begin{theorem} There exists a smooth projective complex surface
$\bar{X}_{\Gamma}$ containing $X_{\Gamma}$ as a dense open subset.
The irreducible components of $\bar{X}_{\Gamma}-X_{\Gamma}$ are elliptic
curves with complex multiplication by $\mathcal{O}_{\mathcal{K}},$
and are in a one-to-one correspondence with $\mathcal{C}_{\Gamma}.$
These conditions determine $\bar{X}_{\Gamma}$ uniquely. \end{theorem}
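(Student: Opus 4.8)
The plan is to construct $\bar{X}_{\Gamma}$ from the Baily--Borel compactification of $X_{\Gamma}$ by a canonical resolution of the cusp singularities, and then to prove uniqueness by a minimal-surface argument. First I would record the structure of the $\Bbb{Q}$-rational parabolic subgroups of $\mathbf{G}$: each is the stabiliser of an isotropic $\mathcal{K}$-line in $V$, and its unipotent radical $U$ is a three-dimensional Heisenberg group with one-dimensional centre $Z$. Adjoining one point for each cusp produces the Satake--Baily--Borel compactification $X_{\Gamma}^{\ast}$, a normal projective surface with $X_{\Gamma}^{\ast}\setminus X_{\Gamma}=\mathcal{C}_{\Gamma}$ a finite set of points; this is the ultimate source of projectivity.

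Next comes the local analysis at a cusp, which I expect to be the main obstacle. After conjugating we may take the cusp at infinity $c_{\infty}={}^{t}(1,0,0)$ and use the Siegel-domain coordinates $(z,u)$ on $\frak{X}$ with $\lambda(z,u)=\mathrm{Im}_{\delta}(z)-u\bar{u}>0$, the cusp being approached as $\mathrm{Im}_{\delta}(z)\to+\infty$. The group $\Gamma_{\infty}=\Gamma\cap U$ acts through Heisenberg-type translations $(z,u)\mapsto(z+a\bar{a}+2\bar{a}u+t,\,u+a)$ with $(a,t)$ running over a lattice, and its centre $\Gamma_{\infty}\cap Z$ acts by the real translations $z\mapsto z+t$ with $t$ in a rank-one lattice $\Bbb{Z}t_{0}\subset\Bbb{R}$. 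Passing first to the quotient by $\Gamma_{\infty}\cap Z$ via $q=\exp(2\pi iz/t_{0})$ presents a neighbourhood of the cusp as a punctured-disc bundle over the $u$-plane, with $q\to0$ as $\mathrm{Im}_{\delta}(z)\to+\infty$; dividing further by $\Gamma_{\infty}/(\Gamma_{\infty}\cap Z)$, which acts on the $u$-plane through a rank-two lattice $\Lambda$ that is stable under $\mathcal{O}_{\mathcal{K}}$ because $L$ is self-dual, yields a punctured-disc bundle over the elliptic curve $E_{\infty}=\Bbb{C}_{u}/\Lambda$. Since $\Lambda$ is an $\mathcal{O}_{\mathcal{K}}$-module, $E_{\infty}$ has complex multiplication by $\mathcal{O}_{\mathcal{K}}$. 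The delicate parts are making this punctured-disc-bundle description precise, pinning down the $\mathcal{O}_{\mathcal{K}}$-structure on $E_{\infty}$, and checking the algebraicity of the family obtained by filling in the puncture.

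The toroidal step is then essentially forced and involves no choices: the cone attached to the cusp is the ray $\Bbb{R}_{>0}$ inside the one-dimensional $\Bbb{R}$-span of $Z$, which admits a unique admissible rational polyhedral decomposition, so one simply fills in the puncture, adjoining to the punctured-disc bundle its zero section, a copy of $E_{\infty}$, along $q=0$. Gluing this analytic datum over all cusps onto $X_{\Gamma}$ yields a smooth compact complex surface $\bar{X}_{\Gamma}$ equipped with a proper birational morphism $\pi\colon\bar{X}_{\Gamma}\to X_{\Gamma}^{\ast}$ that contracts each boundary curve $E_{c}$ to the corresponding cusp point. Algebraicity and projectivity of $\bar{X}_{\Gamma}$ follow because $\pi$ exhibits it as the normalisation of a blow-up of an ideal sheaf supported on the finitely many cusps of the projective variety $X_{\Gamma}^{\ast}$; see [7] and the toroidal-compactification machinery it rests on. By construction $\bar{X}_{\Gamma}\setminus X_{\Gamma}$ is the disjoint union of the smooth elliptic curves $E_{c}$, $c\in\mathcal{C}_{\Gamma}$, each with CM by $\mathcal{O}_{\mathcal{K}}$.

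Finally, for uniqueness, let $\bar{X}$ and $\bar{X}'$ be two smooth projective surfaces containing $X_{\Gamma}$ as a dense open subset, with $\bar{X}\setminus X_{\Gamma}$ and $\bar{X}'\setminus X_{\Gamma}$ disjoint unions of elliptic curves matched bijectively with $\mathcal{C}_{\Gamma}$. The identity on $X_{\Gamma}$ is a birational map $f$ between $\bar{X}$ and $\bar{X}'$, which I resolve through a common smooth model $\bar{Y}$ carrying birational morphisms $\bar{Y}\to\bar{X}$ and $\bar{Y}\to\bar{X}'$, each a composition of point blow-ups. Since $f$ restricts to the identity on $X_{\Gamma}$, its indeterminacy lies in the boundary; if $f$ were not a morphism then some $(-1)$-curve $C\subset\bar{Y}$, hence a copy of $\Bbb{P}^{1}$, would be contracted by $\bar{Y}\to\bar{X}$ but not by $\bar{Y}\to\bar{X}'$, so its image in $\bar{X}'$ would be a rational curve lying inside $\bar{X}'\setminus X_{\Gamma}$ --- impossible, since every irreducible component of that boundary has genus one. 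Running the same argument with $f^{-1}$ shows $f$ is an isomorphism, necessarily carrying boundary components to boundary components compatibly with the labelling by cusps; this is the asserted uniqueness.
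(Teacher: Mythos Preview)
The paper does not supply its own proof of this theorem; it simply refers the reader to Cogdell [7]. Your sketch is a correct outline of the standard construction carried out there --- the crucial point being that for $U(2,1)$ the cone attached to each cusp is a one-dimensional ray, so the toroidal compactification involves no choice of fan and is automatically canonical --- and your uniqueness argument via resolution of indeterminacy, exploiting that no $\Bbb{P}^{1}$ admits a nonconstant map to an elliptic curve, is sound.
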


Holzapfel studied the Chern classes of the surface $\bar{X}_{\Gamma}.$
He obtained the following result {[}19, (5A.4.3), p.325{]}.

\begin{theorem} \label{Holz}Let $c_{2}(\bar{X}_{\Gamma})$ be the
second Chern class of $\bar{X}_{\Gamma},$ which is equal also to
the Euler characteristic 
\begin{equation}
e(\bar{X}_{\Gamma})=\sum_{i=0}^{4}(-1)^{i}\dim H^{i}(\bar{X}_{\Gamma},\Bbb{C}).
\end{equation}
Then 
\begin{equation}
c_{2}(\bar{X}_{\Gamma})=[\Gamma(1):\Gamma(N)]\cdot\frac{3|D_{\mathcal{K}}|^{5/2}}{32\pi^{3}}L\left(3,\left(\frac{D_{\mathcal{K}}}{\cdot}\right)\right).
\end{equation}
Here $\Gamma(N)=\Gamma$ and 
\begin{equation}
\Gamma(1)=\left\{ \gamma\in G|\,\gamma L=L,\,\det\gamma=1\right\} .
\end{equation}
\end{theorem}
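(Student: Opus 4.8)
The plan is to reconstruct Holzapfel's argument, which has two halves: a differential-geometric reduction of $c_2$ to a volume, and an arithmetic volume formula. First I would note that $c_2(\bar X_\Gamma)$, being the top Chern number of a smooth projective surface, equals the topological Euler characteristic $e(\bar X_\Gamma)$, and that $\bar X_\Gamma$ is obtained from the open ball quotient $X_\Gamma=\Gamma\backslash\frak X$ by gluing in finitely many pairwise disjoint elliptic curves (one per cusp, by the previous theorem), each of Euler characteristic zero; hence $e(\bar X_\Gamma)=e(X_\Gamma)$. Applying the Chern--Gauss--Bonnet theorem to the complete, finite-volume Bergman metric on $X_\Gamma$ then writes $e(X_\Gamma)$ as the integral over $X_\Gamma$ of the Euler form, which by homogeneity of $\frak X$ is a fixed constant multiple of $\mathrm{vol}(\Gamma\backslash\frak X)$. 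The constant is pinned down by Hirzebruch proportionality: it is read off from the compact dual $\Bbb P^2_{\Bbb C}$ with the dual metric, for which the Euler number is $c_2(\Bbb P^2)=3$ and the volume is elementary. This yields an identity $c_2(\bar X_\Gamma)=C\cdot\mathrm{vol}(\Gamma(N)\backslash\frak X)$ with $C$ an explicit rational multiple of a power of $\pi$.

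Next I would compute the volume. Since $\mathrm{vol}(\Gamma(N)\backslash\frak X)=[\Gamma(1):\Gamma(N)]\cdot\mathrm{vol}(\Gamma(1)\backslash\frak X)$, everything reduces to the covolume of the arithmetic group $\Gamma(1)=SU(V,(,))(\Bbb Z)$. Here I would invoke a volume formula: because $SU(2,1)$ has Tamagawa number one, the covolume (for the Haar measure matched to the metric used above) is a convergent product of local volumes. Evaluating this product over all places brings in the two Artin $L$-factors attached to the quasi-split form of type ${}^2A_2$, namely $\zeta(2)$ and $L\!\left(3,\left(\tfrac{D_\mathcal K}{\cdot}\right)\right)$, together with a power of $|D_\mathcal K|$ coming from the relative discriminant, elementary corrections at the ramified and dyadic places and at the primes dividing $N$, and a power of $\pi$ from the archimedean (Euler--Poincar\'e/compact-form) factor. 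Substituting into $c_2=C\cdot\mathrm{vol}$ and simplifying — using $\zeta(2)=\pi^2/6$ and cancelling $\pi$-powers against $C$ and the archimedean factor, so that only $\pi^{-3}$ survives — and then multiplying by the index, one is left with exactly $[\Gamma(1):\Gamma(N)]\cdot\tfrac{3|D_\mathcal K|^{5/2}}{32\pi^3}L\!\left(3,\left(\tfrac{D_\mathcal K}{\cdot}\right)\right)$. That this transcendental-looking expression is a positive integer becomes visible via the functional equation of the odd quadratic character, which converts $L\!\left(-2,\left(\tfrac{D_\mathcal K}{\cdot}\right)\right)$ into $\tfrac{|D_\mathcal K|^{5/2}}{2\pi^3}L\!\left(3,\left(\tfrac{D_\mathcal K}{\cdot}\right)\right)$ up to sign, and $L\!\left(-2,\left(\tfrac{D_\mathcal K}{\cdot}\right)\right)$ is a rational number (a generalized Bernoulli number up to an elementary factor), the denominator being cleared by the divisibility built into $[\Gamma(1):\Gamma(N)]$.

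The main obstacle is the exact covolume computation. One must fix a single normalization of Haar measure that is simultaneously the one entering Chern--Gauss--Bonnet and the one used in the local integrals, evaluate those integrals at every place including the ramified and dyadic ones (where fundamental-discriminant phenomena enter and where the precise power of $|D_\mathcal K|$ and the constant $3/32$ are ultimately pinned down), and keep careful track of all powers of $\pi$ so that nothing but $\pi^{-3}$ remains. By contrast, the topological reduction and the functional-equation manipulation are routine. All of this is carried out in {[}19{]}, to which we refer for the proof.
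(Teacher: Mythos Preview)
The paper does not prove this theorem at all: it is quoted verbatim from Holzapfel {[}19, (5A.4.3){]} and used as a black box. Your proposal likewise ends by deferring to {[}19{]}, so in that sense you and the paper agree. Your expanded sketch of the argument (Euler characteristic unaffected by the cuspidal elliptic curves, Hirzebruch proportionality reducing $c_2$ to a normalized volume, then a Tamagawa-number covolume computation producing $\zeta(2)$, $L(3,\chi)$ and the discriminant powers) is a fair outline of the standard route and of what Holzapfel actually does.

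One genuine wrinkle in your sketch deserves mention. You write $\mathrm{vol}(\Gamma(N)\backslash\frak X)=[\Gamma(1):\Gamma(N)]\cdot\mathrm{vol}(\Gamma(1)\backslash\frak X)$ as if it were automatic. It is not: $\Gamma(1)$ need not act faithfully on $\frak X$, and indeed when $D_{\mathcal K}=-3$ the cube roots of unity lie in the center of $\Gamma(1)$, so the index exceeds the volume ratio by a factor of $3$. The paper explicitly flags this immediately after the theorem (the remark about the factor $\varepsilon$ in Holzapfel's formula). In your outline this discrepancy would have to be reconciled with the Tamagawa computation, which sees the group-theoretic covolume of $\Gamma(1)$ rather than the volume of the orbifold quotient; the two differ exactly by the order of the kernel of $\Gamma(1)\to\mathrm{Aut}(\frak X)$. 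This is bookkeeping rather than a conceptual gap, but without it the constant $3/32$ would come out wrong in the $D_{\mathcal K}=-3$ case.
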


From the functional equation of the $L$-function $L(s,(D_{\mathcal{K}}/\cdot))$
we also get 
\begin{equation}
c_{2}(\bar{X}_{\Gamma})=-[\Gamma(1):\Gamma(N)]\cdot\frac{3}{16}L\left(-2,\left(\frac{D_{\mathcal{K}}}{\cdot}\right)\right).
\end{equation}
Note that the index $[\Gamma(1):\Gamma(N)]=[vol(\Gamma(N)\backslash\frak{X}):vol(\Gamma(1)\backslash\frak{X})],$
\emph{except }if $D_{\mathcal{K}}=-3,$ when $\Gamma(1)$ does not
act faithfully on $\frak{X}$ as it contains the roots of unity of
order 3 in its center. In this case $[\Gamma(1):\Gamma(N)]$ is equal
to $3$ times the volume ratio. This accounts for the factor $\varepsilon$
in {[}19, (5A.4.3){]}.

\subsubsection{The Shimura variety}

The surface $X_{\Gamma}$ is a connected component of a certain complex
Shimura variety. To describe it let $\mathbf{\tilde{G}}$ be the group
of unitary similtudes of $(V,(,))$
\begin{equation}
\mathbf{\tilde{G}=GU}(V,(,)).
\end{equation}
Let $\tilde{K}_{f}^{0}\subset\mathbf{\tilde{G}}(\Bbb{A}_{f})$ be
the subgroup stabilizing $\widehat{L}=L\otimes\widehat{\Bbb{Z}}$
and $\tilde{K}_{f}\subset\tilde{K}_{f}^{0}$ the subgroup of elements
which furthermore induce the identity on $L/NL.$ Let $\tilde{K}_{\infty}$
be the stabilizer of $x_{0}=(\delta/2,0)$ in $\tilde{G}_{\infty}=\mathbf{\tilde{G}}(\Bbb{R}),$
and $\tilde{K}=\tilde{K}_{f}\tilde{K}_{\infty}.$ We have the identification
\begin{equation}
\tilde{G}_{\infty}/\tilde{K}_{\infty}=G_{\infty}/K_{\infty}=\frak{X.}
\end{equation}
The Shimura variety $Sh_{\tilde{K}}$ is a complex quasi-projective
variety identified (as a complex manifold) with the double coset space
\begin{eqnarray}
Sh_{\tilde{K}}(\Bbb{C}) & \simeq & \mathbf{\tilde{G}}(\Bbb{Q})\backslash\mathbf{\tilde{G}}(\Bbb{A})/\tilde{K}\notag\\
 & = & \mathbf{\tilde{G}}(\Bbb{Q})\backslash(\frak{X}\times\mathbf{\tilde{G}}(\Bbb{A}_{f})/\tilde{K}_{f}).
\end{eqnarray}
See {[}8{]},{[}28{]}, or the survey paper {[}13{]}. The connected
components of $Sh_{\tilde{K}}$ are of the form $\Gamma_{j}\backslash\frak{X}$
where the $\Gamma_{j}$ are discrete and torsion-free, and one of
the $\Gamma_{j}$ can be taken to be the $\Gamma$ which we have fixed
above.

By the general theory of Shimura varieties, $Sh_{\tilde{K}}$ admits
a \emph{canonical model} over its reflex field. In our case, the reflex
field turns out to be the field $\mathcal{K},$ and we denote the
canonical model by $S_{\mathcal{K}}.$ The irreducible components
of $Sh_{\tilde{K}}$ are not defined over $\mathcal{K},$ but only
over the ray class field $\mathcal{K}_{N}$ of conductor $N$ over
$\mathcal{K}.$ More precisely, each irreducible component of $S_{\mathcal{K}_{N}}=S_{\mathcal{K}}\times_{\mathcal{K}}\mathcal{K}_{N}$
is already geometrically irreducible. This follows, for example, from
the description of $\pi_{0}(Sh_{\tilde{K}})$ and the Galois action
on it given in {[}8{]}.

\subsection{The Picard modular surface over the ring $R_{0}$ and its arithmetic
compactification}

\subsubsection{The moduli problem}

The canonical model $S_{\mathcal{K}}$ of $Sh_{\tilde{K}}$ has an
interpretation as a fine moduli scheme classifying quadruples $(A,\lambda,\iota,\alpha)_{/R}$
where $R$ is a $\mathcal{K}$-algebra and:
\begin{itemize}
\item $A$ is an abelian three-fold over $R$,
\item $\lambda:A\rightarrow A^{t}$ is a principal polarization,
\item $\iota:\mathcal{O}_{\mathcal{K}}\hookrightarrow End(A/R)$ is an embedding
on which the Rosati involution induced by $\lambda$ is given by $\iota(a)\mapsto\iota(\bar{a}),$
and which makes $Lie(A)$ into a locally free $R$-module of type
$(2,1),$
\item $\alpha$ is a level-$N$ structure. 
\end{itemize}
For a precise definition of what one means by a ``level $N$ structure''
and a full discussion see {[}23{]}, and also the earlier references
{[}25{]}, {[}26{]}, {[}3{]} and {[}13{]}. We write $\mathcal{M}(R)$
for the set of isomorphism classes of such quadruples. Thus $S_{\mathcal{K}}$
represents the functor $\mathcal{M}(-)$ from the category of $\mathcal{K}$-algebras
to the category of sets.

The moduli problem $\mathcal{M}$ makes sense, and is representable,
already over $R_{0}.$

\begin{definition} Let $S$ be the fine moduli scheme representing
$\mathcal{M}$ over $R_{0}.$ \end{definition}

The scheme $S$ is smooth of relative dimension 2 over $R_{0}$, and
$S_{\mathcal{K}}$ is its generic fiber. This allows us to consider
the reduction of $S$ modulo primes of $R_{0}$ (i.e. primes of $\mathcal{K}$
not dividing $2d_{\mathcal{K}}N$).

\subsubsection{The arithmetic compactification}

Larsen's thesis was the first source to work out the smooth compactification
$\bar{S}$ of $S$ over $R_{0}.$ See {[}25{]}, with complements and
corrections in {[}3{]},{[}4{]} and {[}23{]}. Let $R_{N}$ be the integral
closure of $R_{0}$ in the ray class field $\mathcal{K}_{N}.$ Then
the irreducible components of $\bar{S}_{R_{N}}-S_{R_{N}}$ are already
geometrically irreducible, and are elliptic curves (over $R_{N}$)
with complex multiplication by $\mathcal{O}_{\mathcal{K}}$.

\subsubsection{The universal semi-abelian variety and the automorphic vector bundles}

By its very definition as a fine moduli scheme, the scheme $S$ carries
a universal abelian three-fold $\mathcal{A}/S$ (with an additional
structure given by $\lambda,\iota$ and $\alpha$). This $\mathcal{A}$
extends to a semi-abelian scheme (still denoted $\mathcal{A})$ over
$\bar{S}.$ In fact, Larsen and Bella\"{i}che give the boundary $C=\bar{S}-S$
an interpretation as a moduli space of certain semi-abelian schemes
with additional structure, and $\mathcal{A}/C$ is the universal object
arising from this interpretation. (The construction of $\mathcal{A}$
over the whole of $\bar{S}$ requires, of course, a little more effort
than its construction over $S$ and $C$ separately.) The abelian
part of $\mathcal{A}/C$ is an elliptic curve $B$ with complex multiplication
by $\mathcal{O}_{\mathcal{K}}$ and CM type $\Sigma.$ Its toric part
is a torus of the form $\frak{a}\otimes\Bbb{G}_{m}.$ Both $B$ and
the invertible $\mathcal{O}_{\mathcal{K}}$-module $\frak{a}$ are
locally constant along $C$, but the extension class of $B$ by $\frak{a}\otimes\Bbb{G}_{m}$
varies continuously. For details, see {[}3{]}.

What is important for us is that the relative cotangent space at the
origin 
\begin{equation}
\omega_{\mathcal{A}/\bar{S}}=e^{*}\Omega_{\mathcal{A}/\bar{S}}^{1}
\end{equation}
($e:\bar{S}\rightarrow\mathcal{A}$ being the zero section) is a locally
free sheaf on $\bar{S}$ endowed with an $\mathcal{O}_{\mathcal{K}}$
action. It therefore breaks into a direct sum 
\begin{equation}
\omega_{\mathcal{A}/\bar{S}}=\mathcal{P}\oplus\mathcal{L}
\end{equation}
where $\mathcal{P}=\omega_{\mathcal{A}/\bar{S}}(\Sigma)$ is a plane
bundle and $\mathcal{L}=\omega_{\mathcal{A}/\bar{S}}(\bar{\Sigma})$
is a line bundle. See (\ref{type}) for the notation. Here we use
the signature assumption in the moduli problem.

Along the boundary $C$ of $S$ the semi-abelian structure on $\mathcal{A}/C$
provides a filtration 
\begin{equation}
0\rightarrow\mathcal{P}_{0}\rightarrow\mathcal{P}\rightarrow\mathcal{P}_{\mu}\rightarrow0
\end{equation}
where $\mathcal{P}_{0}=\omega_{B/C}$ is the cotangent space at the
origin of the abelian part of $\mathcal{A}$ and $\mathcal{P}_{\mu}$
the $\Sigma$-component of the cotangent space of the toric part.
The line bundles $\mathcal{P}_{0},\mathcal{P}_{\mu}$ and $\mathcal{L}$
are trivial along $C,$ but the extension $\mathcal{P}$ is non-trivial
there.

The vector bundles $\mathcal{P}$ and $\mathcal{L}$ are the basic
\emph{automorphic vector bundles} on $\bar{S}.$ (Vector valued) \emph{modular
forms} are global sections of vector bundles belonging to the tensor
algebra generated by them. For example, for any $k\ge0$ and any $R_{0}$-algebra
$R,$ the space of level-$N$ weight-$k$ (scalar valued) modular
forms over $R$ is 
\begin{equation}
M_{k}(N,R)=H^{0}(\bar{S}\times_{R_{0}}R,\mathcal{L}^{k}).
\end{equation}
By the Köcher principle, this is the same as $H^{0}(S\times_{R_{0}}R,\mathcal{L}^{k}).$

\subsection{Preliminary results on $\mathcal{P}$ and $\mathcal{L}$}

We review some results on the automorphic vector bundles $\mathcal{P}$
and $\mathcal{L}.$ Relations which are special to characteristic
$p$ will be treated in the next section.

\subsubsection{The relation $\det\mathcal{P}\simeq\mathcal{L}$}

Although ``well-known'' and crucial for every purpose, we have not
been able to find an adequate reference for this relation. See also
the remarks following the proof of the proposition.

\begin{proposition} \label{1.3}One has $\det\mathcal{P}\simeq\mathcal{L}$
over $\bar{S}_{\mathcal{K}}.$ \end{proposition}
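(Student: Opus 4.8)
The plan is to exploit the fact that $\mathcal{P}\oplus\mathcal{L}=\omega_{\mathcal{A}/\bar S}$ carries an $\mathcal{O}_{\mathcal{K}}$-action compatible with a principal polarization, and to extract the identity $\det\mathcal{P}\simeq\mathcal{L}$ from the self-duality forced by that polarization. Concretely, over $S_{\mathcal{K}}$ the principal polarization $\lambda$ identifies $\mathcal{A}$ with its dual $\mathcal{A}^{t}$, hence $\omega_{\mathcal{A}/\bar S}\simeq \operatorname{Lie}(\mathcal{A}^{t})^{\vee}$ is, via the Weil pairing, dual to $H^{1}_{\mathrm{dR}}(\mathcal{A}/\bar S)/\omega_{\mathcal{A}/\bar S}\simeq R^{1}\pi_{*}\mathcal{O}_{\mathcal{A}}$. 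The Hodge filtration gives a perfect alternating pairing on $H^{1}_{\mathrm{dR}}$ with values in $\mathcal{O}_{\bar S}$, under which $\omega$ is isotropic; so $H^{1}_{\mathrm{dR}}/\omega\simeq \omega^{\vee}$. Taking determinants of the rank-$3$ bundle $H^{1}_{\mathrm{dR}}$ yields $\det H^{1}_{\mathrm{dR}}\simeq \mathcal{O}_{\bar S}$ and hence $\det\omega_{\mathcal{A}/\bar S}\simeq \det(H^{1}_{\mathrm{dR}}/\omega)^{\vee}\simeq\det\omega_{\mathcal{A}/\bar S}$, which is only a tautology — so the polarization alone is not enough and one must bring in the $\mathcal{O}_{\mathcal{K}}$-action.

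The key point is that the alternating pairing $\langle\,,\rangle$ on $H^{1}_{\mathrm{dR}}$ satisfies $\langle ax,y\rangle=\langle x,\bar a y\rangle$ for $a\in\mathcal{O}_{\mathcal{K}}$ (this is exactly the condition that the Rosati involution induces complex conjugation). Decomposing $H^{1}_{\mathrm{dR}}=H^{1}_{\mathrm{dR}}(\Sigma)\oplus H^{1}_{\mathrm{dR}}(\bar\Sigma)$ into its type components, this compatibility forces each summand to be \emph{isotropic} and the pairing to put $H^{1}_{\mathrm{dR}}(\Sigma)$ and $H^{1}_{\mathrm{dR}}(\bar\Sigma)$ in perfect duality with each other. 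Since $\operatorname{Lie}(\mathcal{A})$ has type $(2,1)$, the $\Sigma$-part of $H^{1}_{\mathrm{dR}}$ has rank $3$ — wait, rather one checks ranks carefully: $\omega_{\mathcal{A}/\bar S}=\mathcal{P}\oplus\mathcal{L}$ with $\mathcal{P}$ of type $(2,0)$ and $\mathcal{L}$ of type $(0,1)$, so $H^{1}_{\mathrm{dR}}(\Sigma)$ has rank $2+1=3$? No: $H^{1}_{\mathrm{dR}}(\Sigma)$ contains $\mathcal{P}$ (rank $2$) plus the $\Sigma$-part of $R^{1}\pi_{*}\mathcal{O}_{\mathcal{A}}$, and the latter is dual to $\mathcal{L}$ (by the polarization, as $\mathcal{L}=\omega(\bar\Sigma)$ and duality swaps types). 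So $H^{1}_{\mathrm{dR}}(\Sigma)$ has rank $3$ and $H^{1}_{\mathrm{dR}}(\bar\Sigma)$ has rank $3$ as well, consistent with the total rank $6$. The step I would carry out is: identify $\det H^{1}_{\mathrm{dR}}(\Sigma)$ using the exact sequence $0\to\mathcal{P}\to H^{1}_{\mathrm{dR}}(\Sigma)\to (R^{1}\pi_{*}\mathcal{O}_{\mathcal{A}})(\Sigma)\to 0$, giving $\det H^{1}_{\mathrm{dR}}(\Sigma)\simeq \det\mathcal{P}\otimes (R^{1}\pi_{*}\mathcal{O}_{\mathcal{A}})(\Sigma)$, and then show the left side is canonically trivial (or at least computable) while $(R^{1}\pi_{*}\mathcal{O}_{\mathcal{A}})(\Sigma)\simeq \mathcal{L}^{\vee}$ via the polarization pairing between types. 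Combining, $\det\mathcal{P}\otimes\mathcal{L}^{\vee}\simeq\det H^{1}_{\mathrm{dR}}(\Sigma)$, and one is reduced to trivializing $\det H^{1}_{\mathrm{dR}}(\Sigma)$.

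For that last trivialization I would use the perfect pairing $H^{1}_{\mathrm{dR}}(\Sigma)\times H^{1}_{\mathrm{dR}}(\bar\Sigma)\to\mathcal{O}_{\bar S}$ to get $\det H^{1}_{\mathrm{dR}}(\Sigma)\simeq (\det H^{1}_{\mathrm{dR}}(\bar\Sigma))^{\vee}$, together with the fact that the Gauss–Manin connection (or the existence of the crystalline/Betti comparison in the relevant setting, or simply that $\det H^{1}_{\mathrm{dR}}(\mathcal{A}/\bar S)$ is the pullback of a fixed line from $\operatorname{Spec} R_{0}$, being "constant" over the base) pins down $\det H^{1}_{\mathrm{dR}}$; combined with the complex-conjugation symmetry interchanging the two type-components, one gets $\det H^{1}_{\mathrm{dR}}(\Sigma)\simeq \det H^{1}_{\mathrm{dR}}(\bar\Sigma)$ and hence each is $2$-torsion in the Picard group, and in fact trivial over $\bar S_{\mathcal{K}}$ by a direct check at the level of the Shimura variety (where $H^{1}_{\mathrm{dR}}$ corresponds to the standard rank-$3$ representation $V$ of $\mathbf{G}$ and $\det$ to its determinant character, which is trivial on $SU$ and hence on $\Gamma$). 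The main obstacle, I expect, is making the identification $(R^{1}\pi_{*}\mathcal{O}_{\mathcal{A}})(\Sigma)\simeq\mathcal{L}^{\vee}$ genuinely canonical and checking it extends over the boundary $C$ — along $C$ the semi-abelian degeneration means $H^{1}_{\mathrm{dR}}$ acquires a weight filtration and one must verify the type-decomposition and the polarization pairing behave well, using the explicit description of $\mathcal{A}/C$ (elliptic curve $B$ of CM type $\Sigma$ extended by $\mathfrak a\otimes\mathbb G_m$) recalled above; but since the proposition is only claimed over $\bar S_{\mathcal{K}}$, one may alternatively prove it over $S_{\mathcal{K}}$ and invoke the Koecher principle / normality of $\bar S_{\mathcal{K}}$ to extend the isomorphism of line bundles across the boundary divisor.
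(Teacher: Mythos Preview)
Your reduction to the triviality of $\det H^{1}_{\mathrm{dR}}(\mathcal{A}/\bar S)(\Sigma)$ is exactly the reformulation the paper records in the paragraph \emph{following} its proof, where it notes that $\det H^{1}_{\mathrm{dR}}(\Sigma)\simeq\det\mathcal{P}\otimes\mathcal{L}^{-1}$ via the Hodge filtration and the polarization, just as you write. The paper also remarks that it was unable to establish this triviality in general (over $R_{0}$), so the question is whether your argument over $\bar S_{\mathcal{K}}$ actually closes the gap.

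The step that does not work is the ``complex-conjugation symmetry interchanging the two type-components'' giving $\det H^{1}_{\mathrm{dR}}(\Sigma)\simeq\det H^{1}_{\mathrm{dR}}(\bar\Sigma)$. Over $\bar S_{\mathcal{K}}$ there is no such automorphism: complex conjugation on $\mathcal{K}$ is not $\mathcal{K}$-linear, so it does not act on the scheme or its sheaves. The polarization pairing only gives $\det H^{1}_{\mathrm{dR}}(\Sigma)\simeq(\det H^{1}_{\mathrm{dR}}(\bar\Sigma))^{-1}$, and combining this with the symplectic triviality of $\det H^{1}_{\mathrm{dR}}$ yields a tautology, exactly as you already observed earlier in your draft for the polarization alone. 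So the 2-torsion conclusion is unsupported.

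What \emph{does} work is your final ``direct check'': the flat line bundle $\det H^{1}_{\mathrm{dR}}(\Sigma)$ has monodromy given by the determinant character of $\Gamma$, and since $N\ge 3$ forces $\Gamma\subset SU(V)$, this character is trivial; hence the line bundle is trivial over each $X_{\Gamma}$, and one extends across the cusps. But this last step is the entire content of the proof, and it is precisely what the paper does: its primary argument computes the factor of automorphy of $\det\mathcal{P}\otimes\mathcal{L}^{-1}$ explicitly (via Shimura's formulae) and finds it trivial, then checks extension over $C$; its alternative argument invokes Harris's functor from equivariant bundles on the compact dual, which is the clean version of your ``determinant character on the Levi'' observation. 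Everything preceding your last sentence is scaffolding that either collapses (the conjugation step) or reduces to a tautology; the proof is that final sentence, and it coincides with the paper's.
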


\begin{proof} Since $Pic(\bar{S}_{\mathcal{K}})\subset Pic(\bar{S}_{\Bbb{C}})$
it is enough to prove the claim over $\Bbb{C}.$ By GAGA, it is enough
to establish the triviality of $\det\mathcal{P}\otimes\mathcal{L}^{-1}$
in the analytic category. We do it over the connected component $\bar{X}_{\Gamma}.$
(The argument for any other connected component is the same.) The
formulae in {[}30, Section 1{]} show that both $\det\mathcal{P}$
and $\mathcal{L}$ can be trivialized over $\frak{X}$ so that the
resulting factor of automorphy is the same, namely 
\begin{equation}
j(\gamma;z,u)=a_{3}z+b_{3}u+c_{3},
\end{equation}
where $(a_{3},b_{3},c_{3})$ is the bottom row of the matrix $\gamma\in\Gamma\subset G_{\infty}.$
If $\sigma$ and $\tau$ are the trivializing sections constructed
by Shimura, the section $\sigma\otimes\tau^{-1}$ trivializes $\det\mathcal{P\otimes L}^{-1}$
and descends to $X_{\Gamma}.$ But using {[}7{]}, for example, one
can easily verify that $\sigma\otimes\tau^{-1}$ extends to a nowhere
vanishing section in an open (classical) neighborhood of any component
of $\bar{X}_{\Gamma}-X_{\Gamma}.$ Thus $\sigma\otimes\tau^{-1}$
trivializes $\det\mathcal{P}\otimes\mathcal{L}^{-1}$ on the whole
of $\bar{X}_{\Gamma}.$

An alternative proof is to use Theorem 4.8 of {[}17{]}. In our case
this theorem gives a \emph{functor} $\mathcal{V}\mapsto[\mathcal{V]}$
from the category of $\mathbf{\tilde{G}}(\Bbb{C})$-equivariant vector
bundles on the compact dual $\Bbb{P}_{\Bbb{C}}^{2}$ of $Sh_{\tilde{K}}$
to the category of vector bundles with $\mathbf{\tilde{G}}(\Bbb{A}_{f})$-action
on the inverse system of Shimura varieties $Sh_{\tilde{K}}.$ Here
$\Bbb{P}_{\Bbb{C}}^{2}=\mathbf{\tilde{G}}(\Bbb{C})/\mathbf{\tilde{H}}(\Bbb{C}),$
where $\mathbf{\tilde{H}}(\Bbb{C})$ is the parabolic group stabilizing
the line $\Bbb{C\cdot}$ $^{t}(\delta/2,0,1)$ in $\mathbf{\tilde{G}}(\Bbb{C})=GL_{3}(\Bbb{C})\times\Bbb{C}^{\times},$
and the irreducible $\mathcal{V}$ are associated with highest weight
representations of the Levi factor $\mathbf{\tilde{L}}(\Bbb{C})$
of $\mathbf{\tilde{H}}(\Bbb{C}).$ It is straight forward to check
that $\det\mathcal{P}$ and $\mathcal{L}$ are associated with the
same character of $\mathbf{\tilde{L}}(\Bbb{C}),$ up to a twist by
a character of $\mathbf{\tilde{G}}(\Bbb{C})$, which affects the $\mathbf{\tilde{G}}(\Bbb{A}_{f})$-action
(hence the normalization of Hecke operators), but not the structure
of the line bundles themselves. The functoriality of Harris' construction
implies that $\det\mathcal{P}$ and $\mathcal{L}$ are isomorphic
also algebraically. \end{proof}

We do not know if $\det\mathcal{P}$ and $\mathcal{L}$ are isomorphic
as algebraic line bundles on $\bar{S}$ (over $R_{0}$). This would
be equivalent to the statement that for every PEL structure $(A,\lambda,\iota,\alpha)\in\mathcal{M}(R),$
for any $R_{0}$-algebra $R,$ $\det(H_{dR}^{1}(A/R)(\Sigma))$ is
the trivial line bundle on $Spec(R)$. In fact, the Hodge filtration
gives a short exact sequence of locally free $R$-modules 
\begin{equation}
0\rightarrow H^{0}(A,\Omega_{A/R}^{1})(\Sigma)\rightarrow H_{dR}^{1}(A/R)(\Sigma)\rightarrow H^{1}(A,\mathcal{O})(\Sigma)\rightarrow0.
\end{equation}
This, together with the canonical isomorphisms 
\begin{eqnarray}
H^{0}(A,\Omega_{A/R}^{1})(\Sigma) & \simeq & \omega_{A/R}(\Sigma)=\mathcal{P}\notag\\
H^{1}(A,\mathcal{O})(\Sigma) & \simeq & Lie(A^{t}/R)(\Sigma)\simeq\omega_{A^{t}/R}^{\vee}(\Sigma)\simeq\omega_{A/R}^{\vee}(\overline{\Sigma})=\mathcal{L}^{\vee}
\end{eqnarray}
(where the last isomorphism in the second formula is induced by the
polarization $\lambda$), yield an isomorphism of line bundles over
$Spec(R)$
\begin{equation}
\det(H_{dR}^{1}(A/R)(\Sigma))\simeq\det\mathcal{P}\otimes\mathcal{L}^{-1}.
\end{equation}

To our regret, we have not been able to establish that this line bundle
is always trivial, although a similar statement in the ``Siegel case'',
namely that for any principally polarized abelian scheme $(A,\lambda)$
over $R,$ $\det H_{dR}^{1}(A/R)$ is trivial, follows at once from
the Hodge filtration. A related remark is that if we take $N=1,$
$\bar{S}$ still makes sense as a stack, but $\det\mathcal{P}\otimes\mathcal{L}^{-1}$
is not expected to be trivial anymore, only torsion. The proposition,
however, suffices to guarantee the following corollary, which is all
that we will be using in the sequel.

\begin{corollary} For any closed point $Spec(k)\rightarrow Spec(R_{0}),$
we have $\det\mathcal{P}\simeq\mathcal{L}$ on $\bar{S}_{k}$. \end{corollary}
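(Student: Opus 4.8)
The plan is to deduce the triviality of the invertible sheaf $\mathcal{N}:=\det\mathcal{P}\otimes\mathcal{L}^{-1}$ on each closed fibre of $\bar{S}$ from its triviality on the generic fibre, which is exactly Proposition~\ref{1.3}. A trivialisation over a field extends along any field extension, so it is enough to treat $k=k(\mathfrak{p})$ for the (maximal) ideal $\mathfrak{p}$ of $R_0$ that $\operatorname{Spec}(k)\to\operatorname{Spec}(R_0)$ lands on; for the generic point this is the Proposition itself. First I would reduce to a family with geometrically connected fibres: since $\bar{S}\to\operatorname{Spec}R_0$ is smooth and proper, its Stein factorisation is finite \'etale over $R_0$, so $\bar{S}=\bigsqcup_a\bar{S}^{(a)}$ and each $\bar{S}^{(a)}$ maps, smoothly and properly with geometrically connected fibres, onto $\operatorname{Spec}B_a$ with $B_a$ finite \'etale (in particular a Dedekind domain) over $R_0$ — the $B_a$ sit inside $R_N$, reflecting the rationality over $\mathcal{K}_N$ of the geometric components recalled in the text. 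Restricting Proposition~\ref{1.3}, $\mathcal{N}|_{\bar{S}^{(a)}}$ is trivial on the generic fibre of $g\colon\bar{S}^{(a)}\to\operatorname{Spec}B_a$.

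On a fixed $\bar{S}^{(a)}$ I would run the usual specialisation argument. Clearing denominators in a generic trivialisation gives $s\in H^0(\bar{S}^{(a)},\mathcal{N})$ that is invertible on the generic fibre, so $\operatorname{div}(s)$ is an effective Cartier divisor supported on finitely many closed fibres $\bar{S}^{(a)}_{\mathfrak{q}}$; each such fibre, being smooth and geometrically connected over $k(\mathfrak{q})$, is reduced and irreducible and equals $g^{*}[\mathfrak{q}]$ as a Cartier divisor ($\mathfrak{q}$ being locally principal in the Dedekind ring $B_a$). Hence $\operatorname{div}(s)=g^{*}D$ for a divisor $D$ on $\operatorname{Spec}B_a$, and $\mathcal{N}|_{\bar{S}^{(a)}}\simeq g^{*}\mathcal{O}_{\operatorname{Spec}B_a}(D)$ — so the only possible obstruction to triviality over the base is a class in $\operatorname{Pic}(\operatorname{Spec}R_0)$, which is precisely the point left open in the remarks above. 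Restricting to a closed fibre kills the pulled-back class, so $\mathcal{N}$ is trivial on every closed fibre of every $\bar{S}^{(a)}/B_a$, hence on $\bar{S}_{k'}$ for each residue field $k'$ of each $B_a$; in particular $\mathcal{N}|_{\bar{S}_{\bar k}}\simeq\mathcal{O}$ for $\bar k$ an algebraic closure of $k=k(\mathfrak{p})$.

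It remains to descend from $\bar k$ to $k$. On each connected component $T$ of $\bar{S}_k$ the bundle $\mathcal{N}|_T$ is geometrically trivial over the field $F_T=H^0(T,\mathcal{O}_T)$, and since $T$ is smooth, proper and geometrically connected over $F_T$, Hilbert~90 — $\ker\bigl(\operatorname{Pic}(T)\to\operatorname{Pic}(T_{\overline{F_T}})\bigr)=H^1\bigl(\operatorname{Gal}(\overline{F_T}/F_T),\overline{F_T}^{\times}\bigr)=0$ — forces $\mathcal{N}|_T$ to be trivial; hence $\det\mathcal{P}\simeq\mathcal{L}$ on $\bar{S}_k$, and then on $\bar{S}_{k}$ for any field over $k(\mathfrak{p})$. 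The step I expect to be the genuine obstacle is the first one: knowing that Larsen's arithmetic compactification gives a $\bar{S}$ that is smooth and proper over $R_0$ with finite \'etale Stein factorisation, so that it really decomposes into a disjoint union of families with geometrically connected fibres defined over subrings of $R_N$. Once that input is granted, the remainder is just the elementary fact that an invertible sheaf trivial on the generic fibre of a smooth proper family with geometrically connected fibres over a Dedekind base is pulled back from the base, together with Hilbert~90.
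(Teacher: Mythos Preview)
Your argument is correct and rests on the same idea as the paper's: since $\mathcal{N}=\det\mathcal{P}\otimes\mathcal{L}^{-1}$ is trivial on the generic fibre (Proposition~\ref{1.3}) and $\bar{S}$ is regular, $\mathcal{N}\simeq\mathcal{O}(D)$ for a purely vertical Weil divisor $D$, and one then reads off triviality on closed fibres. The paper's execution is considerably shorter: rather than passing through Stein factorisation, it observes directly that for any irreducible component $Z$ of $\bar{S}_k$ one may subtract a suitable multiple of the \emph{principal} divisor $(p)$ from $D$ so that the modified divisor is disjoint from $Z$ (the irreducible components of the smooth fibre being pairwise disjoint), whence $\mathcal{O}(D)|_Z\simeq\mathcal{O}_Z$. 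Your Stein factorisation repackages the same mechanism---once the fibres are geometrically irreducible the vertical divisor is automatically pulled back from the base---at the cost of quoting that the Stein factorisation of a smooth proper morphism is finite \'etale. Two minor points: ``clearing denominators'' to obtain a \emph{regular} section need not be possible when $B_a$ has nontrivial class group, but this is harmless since the rational section already has divisor of the form $g^{*}D$; and the Hilbert~90 descent is superfluous, because $\bar{S}_{k(\mathfrak{p})}$ is literally the disjoint union of the fibres $\bar{S}^{(a)}_{\mathfrak{q}}$ (for $\mathfrak{q}\mid\mathfrak{p}$ in the various $B_a$), on each of which you have already shown $\mathcal{N}$ to be trivial.
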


\begin{proof} Since $\bar{S}$ is a regular scheme, Proposition \ref{1.3}
implies that $\det\mathcal{P}\otimes\mathcal{L}^{-1}\simeq\mathcal{O}(D)$
for a Weil divisor $D$ which is a $\Bbb{Z}$-linear combination of
irreducible components of vertical fibers over $R_{0}.$ If $Z$ is
an irreducible component of the special fiber $\bar{S}_{k}$, we can
modify $D$ by a multiple of the principle divisor $(p)$ so that
$D$ and $Z$ become disjoint, showing that $\det\mathcal{P}\otimes\mathcal{L}^{-1}|_{Z}$
is trivial. \end{proof}

\subsubsection{The Gauss-Manin connection and the Kodaira-Spencer isomorphism }

Let $\pi:\mathcal{A}\rightarrow S$ be the structure morphism of the
universal abelian scheme over $S$. The Gauss-Manin connection {[}20{]}
\begin{equation}
\nabla:H_{dR}^{1}(\mathcal{A}/S)\rightarrow H_{dR}^{1}(\mathcal{A}/S)\otimes_{\mathcal{O}_{S}}\Omega_{S}^{1}
\end{equation}
(we write $\Omega_{S}^{1}$ for $\Omega_{S/R_{0}}^{1}$) defines the
Kodaira-Spencer map 
\begin{equation}
KS:\omega_{\mathcal{A}/S}\rightarrow\omega_{\mathcal{A}^{t}/S}^{\vee}\otimes_{\mathcal{O}_{S}}\Omega_{S}^{1}.
\end{equation}
Recall that $KS$ is defined by first embedding $\omega_{\mathcal{A}/S}\simeq R^{0}\pi_{*}\Omega_{\mathcal{A}/S}^{1}$
in $H_{dR}^{1}(\mathcal{A}/S),$ then following it by $\nabla,$ and
finally using the projection of $H_{dR}^{1}(\mathcal{A}/S)$ to $R^{1}\pi_{*}\mathcal{O}_{\mathcal{A}}.$
The latter is the relative Lie algebra of $\mathcal{A}^{t}/S,$ hence
may be identified with $\omega_{\mathcal{A}^{t}/S}^{\vee}.$ Unlike
$\nabla,$ $KS$ is $\mathcal{O}_{S}$-linear. It also commutes with
the endomorphisms coming from $\iota(\mathcal{O}_{\mathcal{K}}),$
so defines maps

\begin{eqnarray}
KS(\Sigma) & : & \omega_{\mathcal{A}/S}(\Sigma)\rightarrow\omega_{\mathcal{A}^{t}/S}^{\vee}(\Sigma)\otimes_{\mathcal{O}_{S}}\Omega_{S}^{1}\notag\\
KS(\bar{\Sigma}) & : & \omega_{\mathcal{A}/S}(\bar{\Sigma})\rightarrow\omega_{\mathcal{A}^{t}/S}^{\vee}(\bar{\Sigma})\otimes_{\mathcal{O}_{S}}\Omega_{S}^{1}.
\end{eqnarray}
Alternatively, these are pairings, denoted by the same symbols, 
\begin{eqnarray}
KS(\Sigma) & : & \omega_{\mathcal{A}/S}(\Sigma)\otimes_{\mathcal{O}_{S}}\omega_{\mathcal{A}^{t}/S}(\Sigma)\rightarrow\Omega_{S}^{1}\notag\\
KS(\bar{\Sigma}) & : & \omega_{\mathcal{A}/S}(\bar{\Sigma})\otimes_{\mathcal{O}_{S}}\omega_{\mathcal{A}^{t}/S}(\bar{\Sigma})\rightarrow\Omega_{S}^{1}.\label{KS}
\end{eqnarray}
Observe that (\ref{KS}) are maps between vector bundles of rank 2.

\begin{lemma} The map 
\begin{equation}
KS(\Sigma):\omega_{\mathcal{A}/S}(\Sigma)\otimes_{\mathcal{O}_{S}}\omega_{\mathcal{A}^{t}/S}(\Sigma)\rightarrow\Omega_{S}^{1}
\end{equation}
is an isomorphism, and so is $KS(\bar{\Sigma}).$ \end{lemma}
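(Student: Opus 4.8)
The plan is to reduce the statement to a pointwise computation on the fibers of $\mathcal{A}/S$ and then to recognize the Kodaira--Spencer pairing, on each type component, as the standard perfect pairing between $\omega$ and $\mathrm{Lie}$ of an abelian variety together with its dual. First I would note that $\Omega^1_S$ is locally free of rank $2$ (since $S$ is smooth of relative dimension $2$ over $R_0$) and that both source bundles in the displayed map have rank $2$ by the signature $(2,1)$ condition in the moduli problem (so $\omega_{\mathcal A/S}(\Sigma)=\mathcal P$ has rank $2$, and $\omega_{\mathcal A^t/S}(\Sigma)$ likewise, because $\mathcal A^t$ has type $(1,2)$ and $\omega_{\mathcal A^t/S}(\Sigma)=\omega_{\mathcal A/S}^\vee(\bar\Sigma)^\vee$-type bookkeeping shows it is the rank-$2$ piece — I would double-check this index juggling using the polarization isomorphism $\lambda:\mathcal A\xrightarrow{\sim}\mathcal A^t$, which is $\mathcal O_{\mathcal K}$-semilinear and hence swaps $\Sigma$ and $\bar\Sigma$ components). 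Thus $KS(\Sigma)$ is a map of rank-$2$ bundles, and to prove it is an isomorphism it suffices to check surjectivity fiber by fiber at every point of $S$, i.e.\ to check that the induced pairing on $k(s)$-vector spaces is perfect for each point $s\in S$.

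Next I would recall the classical description of the Kodaira--Spencer map: for an abelian scheme $\pi:\mathcal A\to S$, the composite $\omega_{\mathcal A/S}\hookrightarrow H^1_{dR}(\mathcal A/S)\xrightarrow{\nabla} H^1_{dR}(\mathcal A/S)\otimes\Omega^1_S\twoheadrightarrow (R^1\pi_*\mathcal O_{\mathcal A})\otimes\Omega^1_S=\mathrm{Lie}(\mathcal A^t/S)\otimes\Omega^1_S$ induces, by adjunction and the identification $\mathrm{Lie}(\mathcal A^t/S)^\vee\cong\omega_{\mathcal A^t/S}$, the pairing $KS$. The key structural input is the Griffiths-transversality/Gauss--Manin computation that identifies $KS$ with the cup-product-type pairing; concretely, the full Kodaira--Spencer map $\omega_{\mathcal A/S}\otimes\omega_{\mathcal A^t/S}\to\Omega^1_S$ factors through a canonical isomorphism $\mathrm{Sym}^2$-type identification when $\mathcal A$ is principally polarized of the relevant dimension, but here we only need the $\mathcal O_{\mathcal K}$-isotypic pieces. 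The $\mathcal O_{\mathcal K}$-action splits $\nabla$-compatibly (the Gauss--Manin connection commutes with $\iota(\mathcal O_{\mathcal K})$ since these are global endomorphisms), so $H^1_{dR}(\mathcal A/S)=H^1_{dR}(\mathcal A/S)(\Sigma)\oplus H^1_{dR}(\mathcal A/S)(\bar\Sigma)$, each of rank $3$, and $\nabla$ maps each summand into itself tensored with $\Omega^1_S$. Inside $H^1_{dR}(\mathcal A/S)(\Sigma)$, of rank $3$, the Hodge filtration gives $0\to\mathcal P\to H^1_{dR}(\mathcal A/S)(\Sigma)\to \mathrm{Lie}(\mathcal A^t/S)(\Sigma)\to 0$ with $\mathcal P$ of rank $2$ and the quotient of rank $1$ — so here $KS(\Sigma)$ as a map to $\mathrm{Hom}(\omega_{\mathcal A^t/S}(\Sigma),\Omega^1_S)$ is genuinely a map between rank-$2$ bundles only after one pairs the $\mathcal P$ on one side with the rank-$2$ $\omega_{\mathcal A^t/S}(\Sigma)$ on the other; I would be careful here and recompute the ranks: since $\mathcal A$ has type $(2,1)$, $\omega_{\mathcal A/S}(\Sigma)=\mathcal P$ has rank $2$, and since $\mathcal A^t$ has the dual type $(1,2)$ for its Lie algebra, $\mathrm{Lie}(\mathcal A^t/S)$ has type $(2,1)$ again via $\lambda$, so $\mathrm{Lie}(\mathcal A^t/S)(\Sigma)$ has rank $2$ and $\omega_{\mathcal A^t/S}(\Sigma)$ has rank $2$. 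Good — so $(\ref{KS})$ is indeed rank $2$ against rank $2$, landing in rank $2$, consistent with the Lemma.

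With ranks settled, the argument is: $KS(\Sigma)$ is an isomorphism iff the Gauss--Manin connection $\nabla$ restricted to $H^1_{dR}(\mathcal A/S)(\Sigma)$ has the property that the composite $\mathcal P=F^1\hookrightarrow H^1_{dR}(\Sigma)\xrightarrow{\nabla}H^1_{dR}(\Sigma)\otimes\Omega^1_S\to (H^1_{dR}(\Sigma)/F^1)\otimes\Omega^1_S$ is injective with locally free cokernel, equivalently induces an isomorphism $\mathcal P\xrightarrow{\sim}(H^1_{dR}(\Sigma)/F^1)\otimes\Omega^1_S$; but $H^1_{dR}(\Sigma)/F^1=\mathrm{Lie}(\mathcal A^t/S)(\Sigma)$ has rank $2$ while $\mathcal P$ has rank $2$ and $\Omega^1_S$ has rank $2$ — so this reads $\mathcal P\xrightarrow{\sim}\mathcal P^{(t)}\otimes\Omega^1_S$ with both sides of rank... $2$ vs $4$, which is wrong. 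Therefore the correct formulation must be the \emph{pairing} version, and I would instead argue directly: the Kodaira--Spencer pairing on the full $\omega_{\mathcal A/S}\otimes\omega_{\mathcal A^t/S}\to\Omega^1_S$ is, for a \emph{smooth} moduli scheme of abelian varieties with PEL structure representing a formally smooth deformation problem, \emph{perfect} — this is precisely the statement that $S$ is smooth and the Kodaira--Spencer map of the universal family identifies $\Omega^1_S$ with the appropriate symmetric/isotypic part of $\omega\otimes\omega^t$; this is in Lan's book on PEL Shimura varieties and goes back to the deformation theory of Grothendieck--Messing. The $\mathcal O_{\mathcal K}$-eigenspace decomposition then splits this perfect pairing into $KS(\Sigma)\oplus KS(\bar\Sigma)$ on $\Omega^1_S=\Omega^1_S(\Sigma)\oplus\Omega^1_S(\bar\Sigma)$ — wait, but $\Omega^1_S$ carries no $\mathcal O_{\mathcal K}$-action; rather the pairing $KS$ has the covariance $KS(au,v)=KS(u,\bar a v)$, so it is the $\mathcal O_{\mathcal K}\otimes\mathcal O_{\mathcal K}$-bilinear structure that forces the $\Sigma\otimes\Sigma$ and $\bar\Sigma\otimes\bar\Sigma$ blocks to be the only nonzero ones, and each must then separately be an isomorphism onto $\Omega^1_S$ because their ``sum'' (the total $KS$) is, and the two blocks have complementary images only if each is already surjective onto the full rank-$2$ $\Omega^1_S$ — which is exactly the claim, since both $\mathcal P\otimes\omega_{\mathcal A^t/S}(\Sigma)$ and $\mathcal L\otimes\omega_{\mathcal A^t/S}(\bar\Sigma)$ have rank $2$, I would recheck: $\mathcal L$ has rank $1$ and $\omega_{\mathcal A^t/S}(\bar\Sigma)$ has rank $1$, product rank $1$, mapping to rank-$2$ $\Omega^1_S$ — impossible. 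So the $\bar\Sigma$ block maps $\mathrm{rank}\,1\otimes\mathrm{rank}\,1$ to a rank-$1$ \emph{sub}bundle, and the $\Sigma$ block maps rank $2$ against rank $2$ onto the complementary rank... this bookkeeping is the main obstacle and I would resolve it by carefully using $\dim S=2$, the signature, and the precise statement in Lan (Theorem 2.3.5.1 and its PEL refinements) that the Kodaira--Spencer isomorphism for this moduli problem takes the shape $\Omega^1_S\cong\mathcal P\otimes\omega_{\mathcal A^t/S}(\Sigma)/(\text{Rosati symmetry})\,\oplus\,(\text{the }\bar\Sigma\text{ analogue})$, decide which isotypic summands actually survive the symmetry constraint, and conclude that each displayed $KS$-map, after imposing that constraint, is an isomorphism onto its respective summand of $\Omega^1_S$. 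The main obstacle, in short, is getting the rank/eigenspace bookkeeping exactly right; once the correct target decomposition $\Omega^1_S=(\Omega^1_S)(\Sigma,\Sigma)\oplus(\Omega^1_S)(\bar\Sigma,\bar\Sigma)$ induced by the $KS$-covariance is in place, the isomorphism assertion is immediate from the smoothness of $S$ over $R_0$ and Grothendieck--Messing deformation theory, which guarantees that the total Kodaira--Spencer map is an isomorphism.
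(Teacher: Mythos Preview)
Your proposal lands on the right framework at the very end---deformation theory (Grothendieck--Messing, or the PEL version as in Lan or Bella\"{\i}che)---which is exactly what the paper invokes. But the route you take to get there is marred by a persistent rank error that, if uncorrected, makes the argument unusable.

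The error is in the type of $\omega_{\mathcal{A}^t/S}$. Since $\mathcal{A}$ has signature $(2,1)$ and the Rosati involution is complex conjugation, the polarization $\lambda^*:\omega_{\mathcal{A}^t/S}\to\omega_{\mathcal{A}/S}$ \emph{swaps types}: $\omega_{\mathcal{A}^t/S}(\Sigma)\simeq\omega_{\mathcal{A}/S}(\bar\Sigma)=\mathcal{L}$ has rank~$1$, and $\omega_{\mathcal{A}^t/S}(\bar\Sigma)\simeq\omega_{\mathcal{A}/S}(\Sigma)=\mathcal{P}$ has rank~$2$. (Equivalently: inside $H^1_{dR}(\mathcal{A}/S)(\Sigma)$, which has rank~$3$, the Hodge piece $\mathcal{P}$ has rank~$2$, so the quotient $\mathrm{Lie}(\mathcal{A}^t/S)(\Sigma)$ has rank~$1$, hence so does its dual $\omega_{\mathcal{A}^t/S}(\Sigma)$.) You have these reversed in several places, which is why your rank counts keep coming out inconsistent. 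With the correct ranks, $KS(\Sigma)$ is a map $\mathcal{P}\otimes\mathcal{L}\to\Omega_S^1$ (rank $2\cdot 1=2$ to rank $2$), and $KS(\bar\Sigma)$ is $\mathcal{L}\otimes\mathcal{P}\to\Omega_S^1$ (rank $1\cdot 2=2$ to rank $2$).

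The second, structural, misconception is the idea that $KS(\Sigma)$ and $KS(\bar\Sigma)$ land in \emph{complementary} summands of $\Omega_S^1$. There is no $\mathcal{O}_{\mathcal{K}}$-action on $\Omega_S^1$ and no such splitting. In fact the symmetry relation $KS(\Sigma)(\lambda^*x\otimes y)=KS(\bar\Sigma)(\lambda^*y\otimes x)$ (stated just after the Lemma in the paper) shows that the two maps are \emph{the same} map $\mathcal{P}\otimes\mathcal{L}\to\Omega_S^1$ up to the swap of tensor factors, both surjecting onto all of $\Omega_S^1$. So the argument ``each block must separately be an isomorphism because their sum is'' does not apply; one really has to prove that the single map $\mathcal{P}\otimes\mathcal{L}\to\Omega_S^1$ is an isomorphism, and for that one invokes the deformation-theoretic description of the tangent space of $S$ (liftings of the Hodge filtration compatible with the $\mathcal{O}_{\mathcal{K}}$-action and polarization), which is precisely what the paper cites.
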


\begin{proof} This follows from deformation theory. For a self-contained
proof in the arithmetic case, see {[}3{]}, Prop. II.2.1.5. \end{proof}

The type-reversing isomorphism $\lambda^{*}:\omega_{\mathcal{A}^{t}/S}\rightarrow\omega_{\mathcal{A}/S}$
induced by the principal polarization is an isomorphism 
\begin{eqnarray}
\omega_{\mathcal{A}^{t}/S}(\Sigma) & \simeq & \omega_{\mathcal{A}/S}(\bar{\Sigma})=\mathcal{L}\\
\omega_{\mathcal{A}^{t}/S}(\bar{\Sigma}) & \simeq & \omega_{\mathcal{A}/S}(\Sigma)=\mathcal{P}
\end{eqnarray}
and satisfies the \emph{symmetry relation} 
\begin{equation}
KS(\Sigma)(\lambda^{*}x\otimes y)=KS(\bar{\Sigma})(\lambda^{*}y\otimes x)
\end{equation}
for all $x\in\omega_{\mathcal{A}^{t}/S}(\bar{\Sigma})$ and $y\in\omega_{\mathcal{A}^{t}/S}(\Sigma).$
See {[}11{]}, Prop. 9.1 on p.81 (in the Siegel modular case).

\begin{proposition} The Kodaira-Spencer map induces a canonical isomorphism
of vector bundles over $S$
\begin{equation}
\mathcal{P}\otimes\mathcal{L}\simeq\Omega_{S}^{1}.
\end{equation}
\end{proposition}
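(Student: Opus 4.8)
The plan is to combine the two Kodaira--Spencer isomorphisms of the preceding lemma with the polarization isomorphism $\lambda^*$ and the determinant identity $\det\mathcal{P}\simeq\mathcal{L}$ (Proposition \ref{1.3}, or rather its corollary, though over $S_{\mathcal{K}}$ the proposition suffices and then one extends; but actually we only need it over $S$, so let me phrase the argument to use the rank-$2$ statement directly). First I would rewrite $KS(\bar\Sigma)$ using $\lambda^*:\omega_{\mathcal{A}^t/S}(\Sigma)\xrightarrow{\ \sim\ }\mathcal{L}$ and $\lambda^*:\omega_{\mathcal{A}^t/S}(\bar\Sigma)\xrightarrow{\ \sim\ }\mathcal{P}$, so that $KS(\bar\Sigma)$ becomes an isomorphism $\mathcal{P}\otimes\mathcal{L}\xrightarrow{\ \sim\ }\Omega^1_S$ — wait, one must be careful about which factor goes where. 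Concretely, $KS(\bar\Sigma):\omega_{\mathcal{A}/S}(\bar\Sigma)\otimes\omega_{\mathcal{A}^t/S}(\bar\Sigma)\to\Omega^1_S$, i.e. $\mathcal{L}\otimes\omega_{\mathcal{A}^t/S}(\bar\Sigma)\to\Omega^1_S$, and $\lambda^*$ identifies $\omega_{\mathcal{A}^t/S}(\bar\Sigma)\simeq\mathcal{P}$; so $KS(\bar\Sigma)$ already directly gives $\mathcal{L}\otimes\mathcal{P}\simeq\Omega^1_S$. That is the isomorphism asserted, and it is canonical because $KS$, $\lambda^*$ and the type decomposition are all canonical. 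So in fact the proof is essentially a one-line matching of the pieces, plus the observation that the symmetry relation guarantees that the isomorphism obtained from $KS(\Sigma)$ via the other copy of $\lambda^*$ agrees with this one up to sign, so there is genuinely a well-defined canonical $\mathcal{P}\otimes\mathcal{L}\simeq\Omega^1_S$.

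Let me lay out the steps in order. (i) Apply $\lambda^*$ to turn $KS(\bar\Sigma)$ into an $\mathcal{O}_S$-linear isomorphism $\Phi:\mathcal{L}\otimes_{\mathcal{O}_S}\mathcal{P}\xrightarrow{\ \sim\ }\Omega^1_S$; it is an isomorphism because $KS(\bar\Sigma)$ is (preceding lemma) and $\lambda^*$ is. (ii) Symmetrically, $KS(\Sigma)$ together with $\lambda^*:\omega_{\mathcal{A}^t/S}(\Sigma)\simeq\mathcal{L}$ gives $\Psi:\mathcal{P}\otimes_{\mathcal{O}_S}\mathcal{L}\xrightarrow{\ \sim\ }\Omega^1_S$. (iii) Invoke the symmetry relation $KS(\Sigma)(\lambda^*x\otimes y)=KS(\bar\Sigma)(\lambda^*y\otimes x)$ to check $\Phi$ and $\Psi$ correspond under the flip $\mathcal{P}\otimes\mathcal{L}\simeq\mathcal{L}\otimes\mathcal{P}$, so the isomorphism $\mathcal{P}\otimes\mathcal{L}\simeq\Omega^1_S$ is independent of the choice and is the natural one. (iv) Since $\mathcal{L}$ is a line bundle, $\mathcal{L}\otimes\mathcal{P}$ and $\mathcal{P}\otimes\mathcal{L}$ are canonically identified, completing the statement. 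One may additionally remark, taking determinants in $\Phi$, that $\det\mathcal{P}\otimes\mathcal{L}^{2}\simeq\det\Omega^1_S=\omega_{S/R_0}$, recovering a familiar relation; but this is a corollary, not needed for the proof.

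The main obstacle, such as it is, is purely bookkeeping: keeping straight which $\Sigma$-component of which of $\omega_{\mathcal{A}/S}$ and $\omega_{\mathcal{A}^t/S}$ is being paired, and making sure the type-reversing character of $\lambda^*$ is used on the correct factor so that both inputs to $KS(\bar\Sigma)$ are supplied. There is no deep content beyond the preceding lemma and the symmetry relation; the genuine analytic/deformation-theoretic input (nondegeneracy of Kodaira--Spencer) has already been isolated in that lemma, and the polarization symmetry has been quoted from \cite[Prop.~9.1]{} (the Siegel case reference in the text). I would therefore write the proof as a short paragraph assembling these ingredients, flagging only that $\mathcal{P}\otimes\mathcal{L}\simeq\Omega^1_S$ is canonical (no choices, well-defined up to nothing, by the symmetry relation), and not dwelling on the index juggling.
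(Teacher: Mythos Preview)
Your proposal is correct and takes essentially the same approach as the paper: the paper's one-line proof simply uses $\lambda^{*}$ to identify $\omega_{\mathcal{A}^{t}/S}(\Sigma)$ with $\omega_{\mathcal{A}/S}(\bar{\Sigma})=\mathcal{L}$ in the isomorphism $KS(\Sigma)$, which is exactly your step (ii). Your additional steps (i), (iii), (iv) invoking $KS(\bar\Sigma)$ and the symmetry relation are a harmless elaboration confirming canonicity, but the paper does not bother with them.
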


\begin{proof} Use $\lambda^{*}$ to identify $\omega_{\mathcal{A}^{t}/S}(\Sigma)$
with $\omega_{\mathcal{A}/S}(\bar{\Sigma}).$ \end{proof}

\begin{corollary} Up to a twist by a fractional ideal of $R$, there
is an isomorphism of line bundles $\mathcal{L}^{3}\simeq\Omega_{S}^{2}.$
\end{corollary}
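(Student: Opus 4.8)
The plan is to deduce this directly from the two results just established: the Kodaira--Spencer isomorphism $\mathcal{P}\otimes\mathcal{L}\simeq\Omega_S^1$ and the relation $\det\mathcal{P}\simeq\mathcal{L}$ of Proposition \ref{1.3}. First I would write $\Omega_S^2=\wedge^2\Omega_S^1$ and substitute the Kodaira--Spencer isomorphism to get $\Omega_S^2\simeq\wedge^2(\mathcal{P}\otimes\mathcal{L})$. Since $\mathcal{L}$ is a line bundle and $\mathcal{P}$ has rank $2$, there is a canonical isomorphism $\wedge^2(\mathcal{P}\otimes\mathcal{L})\simeq(\wedge^2\mathcal{P})\otimes\mathcal{L}^{\otimes2}=\det\mathcal{P}\otimes\mathcal{L}^{2}$; this is the only piece of linear algebra involved, and it is entirely routine. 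Combining, $\Omega_S^2\simeq\det\mathcal{P}\otimes\mathcal{L}^{2}$.

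Next I would invoke $\det\mathcal{P}\simeq\mathcal{L}$ to replace $\det\mathcal{P}$ and conclude $\Omega_S^2\simeq\mathcal{L}\otimes\mathcal{L}^{2}=\mathcal{L}^{3}$. The one subtlety, which accounts for the phrase \emph{``up to a twist by a fractional ideal of $R$''} in the statement, is that Proposition \ref{1.3} only gives the isomorphism $\det\mathcal{P}\simeq\mathcal{L}$ over $\bar S_{\mathcal K}$ (and, via the corollary, over each closed fiber), whereas here we are working over $S$, i.e. over $R_0$ or a general $R_0$-algebra $R$. As explained in the remarks following Proposition \ref{1.3}, over $R_0$ the line bundle $\det\mathcal{P}\otimes\mathcal{L}^{-1}$ is represented by $\mathcal{O}(D)$ for a vertical divisor $D$, so it need not be trivial, only torsion; pulled back to $\operatorname{Spec}R$ for a given PEL structure it corresponds to the (invertible but possibly nontrivial) module $\det H^1_{dR}(A/R)(\Sigma)$, which is the promised ``fractional ideal'' ambiguity.

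So the honest statement I would prove is: over $S$ one has $\Omega_S^2\simeq\mathcal{L}^{3}\otimes\bigl(\det\mathcal{P}\otimes\mathcal{L}^{-1}\bigr)$, and the correction factor $\det\mathcal{P}\otimes\mathcal{L}^{-1}$ becomes trivial on every geometric fiber and, after Proposition \ref{1.3}, on $\bar S_{\mathcal K}$. I do not anticipate a genuine obstacle here — the result is a formal corollary of the preceding proposition and Proposition \ref{1.3}; the only care needed is in phrasing the fractional-ideal twist correctly and in noting that the argument is carried out over $S$ rather than over $\bar S$, since Kodaira--Spencer is only an isomorphism away from the boundary.
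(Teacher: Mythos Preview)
Your proof is correct and follows exactly the paper's approach: take determinants of the Kodaira--Spencer isomorphism $\mathcal{P}\otimes\mathcal{L}\simeq\Omega_S^1$, use $\det(\mathcal{P}\otimes\mathcal{L})\simeq\det\mathcal{P}\otimes\mathcal{L}^2$, and then invoke $\det\mathcal{P}\simeq\mathcal{L}$ with the caveat that this last isomorphism is only established over $\bar S_{\mathcal K}$, whence the fractional-ideal ambiguity over $R_0$. One minor inaccuracy in your commentary: the paper does not assert that $\det\mathcal{P}\otimes\mathcal{L}^{-1}$ is torsion over $R_0$ --- the ``only torsion'' remark concerns the $N=1$ stacky case --- but this does not affect the argument.
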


\begin{proof} Take determinants and use $\det(\mathcal{P}\otimes\mathcal{L})\simeq(\det\mathcal{P)}\otimes\mathcal{L}^{2}\simeq\mathcal{L}^{3}.$
Note that we know $\det\mathcal{P}\simeq\mathcal{L}$ only up to a
twist by a fractional ideal of $R_{0}$ (cf. the discussion following
Proposition \ref{1.3}). \end{proof}

\subsubsection{More identities over $\bar{S}$}

We have seen that $\Omega_{S}^{2}\simeq\mathcal{L}^{3}.$ For the
following proposition, compare {[}3{]}, Lemme II.2.1.7.

\begin{proposition} \label{canonical class}Let $C=\bar{S}-S.$ There
is an isomorphism 
\begin{equation}
\Omega_{\bar{S}}^{2}\simeq\mathcal{L}^{3}\otimes\mathcal{O}(C)^{\vee}.
\end{equation}
\end{proposition}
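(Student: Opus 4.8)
The plan is to compare the canonical bundle of $\bar{S}$ with that of $S$ across the boundary divisor $C=\bar{S}-S$, using the already-established isomorphism $\Omega_S^2\simeq\mathcal{L}^3$ over the open part together with the adjunction/residue exact sequence along $C$. First I would recall that for a smooth projective surface $\bar{S}$ and a smooth (reduced, normal crossings) divisor $C$, there is a canonical short exact sequence
\begin{equation}
0\to\Omega_{\bar{S}}^1\to\Omega_{\bar{S}}^1(\log C)\to\mathcal{O}_C\to0,
\end{equation}
and upon taking top exterior powers this gives
\begin{equation}
\Omega_{\bar{S}}^2(\log C)\simeq\Omega_{\bar{S}}^2\otimes\mathcal{O}(C).
\end{equation}
So the proposition is equivalent to the assertion that the automorphic line bundle $\mathcal{L}^3$ extends the interior canonical bundle to the \emph{logarithmic} canonical bundle $\Omega_{\bar{S}}^2(\log C)$ over all of $\bar{S}$; i.e. the Kodaira--Spencer isomorphism $\mathcal{P}\otimes\mathcal{L}\xrightarrow{\sim}\Omega_S^1$ of the previous proposition extends to a logarithmic Kodaira--Spencer isomorphism $\mathcal{P}\otimes\mathcal{L}\xrightarrow{\sim}\Omega_{\bar{S}}^1(\log C)$ on $\bar{S}$.

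The key steps, in order. (1) Work locally along a component of $C$, in the toroidal chart coming from Larsen's/Bella\"iche's compactification. There the semi-abelian scheme $\mathcal{A}$ degenerates with abelian part the CM elliptic curve $B$ (of CM type $\Sigma$) and toric part $\mathfrak{a}\otimes\mathbb{G}_m$, and we have the filtration $0\to\mathcal{P}_0\to\mathcal{P}\to\mathcal{P}_\mu\to0$ with $\mathcal{P}_0=\omega_{B/C}$; all three line bundles $\mathcal{P}_0,\mathcal{P}_\mu,\mathcal{L}$ are trivial along $C$. (2) Extend the Gauss--Manin connection to a connection with log poles along $C$ on the canonical (Deligne) extension of $H^1_{dR}$; this is standard for degenerating abelian varieties. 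The associated graded pieces behave predictably: the nearby-cycles/monodromy weight filtration on $H^1_{dR}$ has $\mathcal{P}_\mu$ as its lowest step and the dual of $\mathcal{P}_\mu$ on top, with $\omega_{B/C}$ and its dual in the middle. (3) Feed this into the Kodaira--Spencer construction to get a map $\mathcal{P}\otimes\mathcal{L}\to\Omega_{\bar{S}}^1(\log C)$, and check it is still an isomorphism. The point is that the residue of $\nabla$ along a boundary component is nilpotent but nonzero: precisely, it pairs the toric cotangent direction $\mathcal{P}_\mu$ against the coordinate $q$ on the formal punctured disc transverse to $C$, producing $dq/q$, which is exactly a generator of $\Omega^1(\log C)$ in that direction. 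The complementary direction (moving along $C$, i.e. varying the extension class and the point on $B$) is handled by the interior Kodaira--Spencer already proven. Multiplying determinants: $\det(\mathcal{P}\otimes\mathcal{L})\simeq\mathcal{L}^3$ by Proposition \ref{1.3}, while $\det\Omega_{\bar{S}}^1(\log C)\simeq\Omega_{\bar{S}}^2\otimes\mathcal{O}(C)$ by step (0); rearranging yields $\Omega_{\bar{S}}^2\simeq\mathcal{L}^3\otimes\mathcal{O}(C)^\vee$.

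The main obstacle is step (2)--(3): verifying the \emph{precise} boundary behavior of the Gauss--Manin connection and its Kodaira--Spencer image in the toroidal coordinates, i.e. that the pole is logarithmic (no worse) and that the residue map is an isomorphism onto the conormal direction rather than vanishing or being merely injective. This requires knowing the explicit local structure of the degeneration at the cusp — the Tate-object / Mumford-construction description from Bella\"iche's and Larsen's theses — and matching it with Shimura's factors of automorphy near the cusp, exactly the kind of boundary computation referenced in \cite{3} (Lemme II.2.1.7), which is why I would cite that for the local verification rather than redo it. A clean alternative, avoiding connections entirely, is to argue on the projective surface directly: $\Omega_{\bar{S}}^2$ and $\mathcal{L}^3\otimes\mathcal{O}(C)^\vee$ are line bundles on the regular scheme $\bar{S}$ that agree on the dense open $S$ (by the Corollary, $\det\mathcal{P}\simeq\mathcal{L}$ on each fiber, hence $\Omega_S^2\simeq\mathcal{L}^3$), so their difference is supported on $C$; one then pins down the multiplicity of each boundary component by a local computation at a single generic point of each component of $C$ — again the CM-elliptic-curve degeneration — to see that $\Omega_{\bar{S}}^2$ acquires exactly a simple zero along each component relative to $\mathcal{L}^3$. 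Either way the arithmetic subtlety about the fractional-ideal twist in $\det\mathcal{P}\simeq\mathcal{L}$ over $R_0$ is harmless here since, as in the Corollary, we may localize at one boundary component at a time.
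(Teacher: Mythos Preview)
Your proposal is correct, but the paper takes a much more elementary route that sidesteps the Gauss--Manin/log-Kodaira--Spencer analysis entirely. The paper argues purely by intersection theory on the surface: after base-changing to an algebraically closed field so that $C=\coprod E_j$ is a disjoint union of elliptic curves, one knows $\Omega_{\bar S}^2\simeq\mathcal{L}^3\otimes\bigotimes_j\mathcal{O}(E_j)^{n_j}$ for some integers $n_j$ (this is your ``difference is supported on $C$'' step). To pin down $n_j$ the paper does \emph{not} compute locally at the degeneration; instead it applies the adjunction formula $0=2g(E_j)-2=E_j.(E_j+K_{\bar S})$, notes $E_j.E_j<0$ by Grauert (the $E_j$ are contractible to the Baily--Borel cusps), and uses that $\mathcal{L}|_{E_j}$ and $\mathcal{O}(E_i)|_{E_j}$ for $i\neq j$ are trivial. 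Restricting to $E_j$ then gives $-E_j.E_j=n_j\,E_j.E_j$, hence $n_j=-1$.

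Your approach --- extending Kodaira--Spencer to a log isomorphism $\mathcal{P}\otimes\mathcal{L}\simeq\Omega_{\bar S}^1(\log C)$ via the nilpotent residue of $\nabla$ --- is the ``right'' conceptual proof and is essentially what Bella\"iche does in the reference the paper cites for comparison; it explains \emph{why} the twist is exactly $\mathcal{O}(C)^\vee$ and generalizes cleanly. The paper's argument trades that insight for brevity: it needs only that the boundary components are genus-$1$ curves with negative self-intersection and that $\mathcal{L}$ is trivial along them, facts already recorded earlier. No toroidal charts, no monodromy-weight filtration, no residue computation.
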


\begin{proof} We base change to an algebraically closed field, so
that $C$ becomes the disjoint union of elliptic curves $E_{i}$ $(1\le i\le h).$
By {[}18{]} II.6.5, $\Omega_{\bar{S}}^{2}\simeq\mathcal{L}^{3}\otimes\bigotimes_{j=1}^{h}\mathcal{O}(E_{j})^{n_{j}}$
for some integers $n_{j}$, and we want to show that $n_{j}=-1$ for
all $j.$ By the adjunction formula on the smooth surface $\bar{S},$
if we denote by $K_{\bar{S}}$ a canonical divisor, $\mathcal{O}(K_{\bar{S}})=\Omega_{\bar{S}}^{2},$
then 
\begin{equation}
0=2g_{E_{j}}-2=E_{j}.(E_{j}+K_{\bar{S}}).
\end{equation}
We conclude that 
\begin{equation}
\deg(\Omega_{\bar{S}}^{2}|_{E_{j}})=E_{j}.K_{\bar{S}}=-E_{j}.E_{j}>0.
\end{equation}
Here $E_{j}.E_{j}<0$ because $E_{j}$ can be contracted to a point
(Grauert's theorem). As $\mathcal{L}|_{E_{j}}$ and $\mathcal{O}(E_{i})|_{E_{j}}$
$(i\neq j)$ are trivial we get 
\begin{equation}
-E_{j}.E_{j}=n_{j}E_{j}.E_{j},
\end{equation}
hence $n_{j}=-1$ as desired. \end{proof}

\section{The Picard modular surface modulo an inert prime}

\subsection{The stratification}

\subsubsection{The three strata}

Let $p$ be a rational prime which is inert in $\mathcal{K}$ and
relatively prime to $2N.$ Then $\kappa_{0}=R_{0}/pR_{0}$ is isomorphic
to $\Bbb{F}_{p^{2}}.$ We fix an algebraic closure $\kappa$ of $\kappa_{0}$
and consider the characteristic $p$ fiber 
\begin{equation}
\bar{S}_{\kappa}=\bar{S}\times_{R_{0}}\kappa.
\end{equation}
\emph{Unless otherwise specified}, in this section we let $S$ and
$\bar{S}$ denote the characteristic $p$ fibers $S_{\kappa}$ and
$\bar{S}_{\kappa}.$ We also use the abbreviation $\omega_{\mathcal{A}}$
for $\omega_{\mathcal{A}/\bar{S}}$ etc.

Recall that an abelian variety over an algebraically closed field
of characteristic $p$ is called \emph{supersingular} if the Newton
polygon of its $p$-divisible group has a constant slope $1/2.$ It
is called \emph{superspecial }if it is isomorphic to a product of
supersingular elliptic curves. The following theorem combines various
results proved in {[}6{]}, {[}31{]} and {[}32{]}.

\begin{theorem} \label{Vollaard}(i) There exists a closed reduced
$1$-dimensional subscheme $S_{ss}\subset\bar{S}$ (the supersingular
locus), disjoint from the cuspidal divisor (i.e. contained in $S$),
which is uniquely characterized by the fact that for any geometric
point $x$ of $S,$ the abelian variety $\mathcal{A}_{x}$ is supersingular
if and only if $x$ lies on $S_{ss}$. The scheme $S_{ss}$ is defined
over $\kappa_{0}.$

(ii) Let $S_{ssp}$ be the singular locus of $S_{ss}.$ Then $x$
lies in $S_{ssp}$ if and only if $\mathcal{A}_{x}$ is superspecial.
If $x\in S_{ssp}$ then 
\begin{equation}
\widehat{\mathcal{O}}_{S_{ss},x}\simeq\Bbb{\kappa}[[u,v]]/(u^{p+1}+v^{p+1}).
\end{equation}

(iii) Assume that $N$ is large enough (depending on $p$). Then the
irreducible components of $S_{ss}$ are nonsingular, and in fact are
all isomorphic to the Fermat curve $\mathcal{C}_{p}$ given by the
equation 
\begin{equation}
x^{p+1}+y^{p+1}+z^{p+1}=0.
\end{equation}
There are $p^{3}+1$ points of $S_{ssp}$ on each irreducible component,
and through each such point pass $p+1$ irreducible components. Any
two irreducible components are either disjoint or intersect transversally
at a unique point.

(iv) Without the assumption of $N$ being large (but under $N\ge3$
as usual) the irreducible components of $S_{ss}$ may have multiple
intersections with each other, including self-intersections. Their
normalizations are nevertheless still isomorphic to $\mathcal{C}_{p}.$
\end{theorem}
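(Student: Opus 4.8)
The plan is to follow [6], [31], [32] and derive everything from Rapoport--Zink $p$-adic uniformization of the supersingular locus. First the soft points. A semi-abelian variety with non-trivial toric part has Newton slopes $0$ and $1$ in that toric part and so is never supersingular; hence every supersingular point of $\bar{S}$ lies in $S$, away from $C=\bar{S}-S$. The supersingular locus is exactly the basic Newton stratum of the Shimura variety, hence closed and defined over the residue field $\kappa_{0}=\Bbb{F}_{p^{2}}$ of the reflex prime; we take $S_{ss}$ to be the reduced closed subscheme with that support, which makes (i) a tautology apart from the geometric assertions. For those one invokes the theorem of Rapoport--Zink: fixing a supersingular $p$-divisible group $\Bbb{X}$ with $\mathcal{O}_{\mathcal{K}}\otimes\Bbb{Z}_{p}$-action, polarization and the signature of $\mathcal{M}$, and letting $\mathcal{N}$ be its Rapoport--Zink formal scheme, the completion of the arithmetic model of $\bar{S}$ along $S_{ss}$ is isomorphic to $I(\Bbb{Q})\backslash(\mathcal{N}\times\mathbf{\tilde{G}}(\Bbb{A}_{f}^{p})/\tilde{K}^{p})$, where $I$ is the inner form of $\mathbf{\tilde{G}}$ attached to the polarized isogeny class and $\tilde{K}^{p}$ is the prime-to-$p$ level; on underlying reduced schemes this reads $S_{ss}\simeq I(\Bbb{Q})\backslash(\mathcal{N}_{\mathrm{red}}\times\mathbf{\tilde{G}}(\Bbb{A}_{f}^{p})/\tilde{K}^{p})$. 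Everything is thereby reduced to the geometry of the projective $\kappa$-scheme $\mathcal{N}_{\mathrm{red}}$ and this quotient.

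The core, carried out in [31] and [32], is the Dieudonn\'e-theoretic description of $\mathcal{N}_{\mathrm{red}}$. A $\kappa$-point is a Dieudonn\'e lattice $M$ in the isocrystal $N$ of $\Bbb{X}$ --- a free $W(\kappa)$-module of rank $6$ with $\sigma$-linear $F$, $\sigma^{-1}$-linear $V$, a $\Bbb{Z}_{p^{2}}$-action splitting everything into the types $\Sigma,\bar{\Sigma}$, a perfect alternating form from the polarization, and the signature condition $\dim_{\kappa}(M/VM)(\Sigma)=2$. Since $\Bbb{X}$ is isoclinic of slope $1/2$, one builds from $F$, $V$ and the $\Bbb{Z}_{p^{2}}$-action a $\sigma^{2}$-linear operator $\tau$ on $N$ whose fixed space $C=N^{\tau}$ is a $3$-dimensional $\Bbb{Q}_{p^{2}}$-vector space carrying a hermitian form $h$, and $M\mapsto\Lambda(M)\subset C$ attaches to each point a $\Bbb{Z}_{p^{2}}$-lattice. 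One then shows that the lattices occurring are the vertices $\Lambda$ of the Bruhat--Tits tree $\mathcal{B}$ of the quasi-split rank-one group $\mathrm{U}(C,h)$, that $\mathcal{N}_{\mathrm{red}}=\bigcup_{\Lambda}\mathcal{N}_{\Lambda}$, and that each $\mathcal{N}_{\Lambda}$ is, by its defining equations, a Deligne--Lusztig variety for a Coxeter element of a finite unitary group in three variables over $\Bbb{F}_{p}$ --- which here is precisely the Fermat (Hermitian) curve $\mathcal{C}_{p}$, carrying its $p^{3}+1$ rational points as superspecial points. Adjacency in $\mathcal{B}$ governs incidence: $\mathcal{N}_{\Lambda}\cap\mathcal{N}_{\Lambda'}$ is nonempty exactly when $\Lambda,\Lambda'$ are neighbors, in which case it is the single superspecial point attached to the edge, so the two valences $p+1$ and $p^{3}+1$ of $\mathcal{B}$ give ``$p+1$ components through each superspecial point'' and ``$p^{3}+1$ superspecial points on each component''. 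Finally the complete local ring at a superspecial point is computed from the local equations of the $\mathcal{N}_{\Lambda}$ meeting at a common vertex (equivalently, from the display of the superspecial $p$-divisible group) to be $\kappa[[u,v]]/(u^{p+1}+v^{p+1})$, a transverse union of $p+1$ smooth branches because $u^{p+1}+v^{p+1}$ splits into $p+1$ pairwise non-proportional linear forms over $\kappa$.

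It remains to descend through the uniformization. Statement (i) and the local ring in (ii) are \'etale-local and so pass verbatim from $\mathcal{N}_{\mathrm{red}}$, as do reducedness and pure $1$-dimensionality of $S_{ss}$. For (iii), choose $N$ large enough that $I(\Bbb{Q})$, acting on $\mathcal{N}_{\mathrm{red}}\times\mathbf{\tilde{G}}(\Bbb{A}_{f}^{p})/\tilde{K}^{p}$, separates components and superspecial points; then the quotient map is a closed immersion on each component, the $\mathcal{N}_{\Lambda}$ descend to the irreducible components of $S_{ss}$, and incidences are preserved, so the components are nonsingular copies of $\mathcal{C}_{p}$ meeting transversally at single superspecial points. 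For (iv), with only $N\ge3$, the $I(\Bbb{Q})$-action need not separate the superspecial points lying on a given component or on two given components, so a component may acquire self-intersections and two components may meet at several points; but for $N\ge3$ the finite subgroups of $I(\Bbb{Q})$ that stabilize a component act trivially on it (Serre's lemma on level structures), so $\mathcal{C}_{p}\to S_{ss}$ is birational onto its image on each component and the normalization of every irreducible component of $S_{ss}$ is $\mathcal{C}_{p}$.

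The main obstacle is the middle step. The Rapoport--Zink uniformization and the formal manipulations around it are standard, but the explicit Dieudonn\'e-lattice bookkeeping --- that the $\mathcal{N}_{\Lambda}$ exhaust $\mathcal{N}_{\mathrm{red}}$, that they are exactly Deligne--Lusztig curves isomorphic to $\mathcal{C}_{p}$, that $\mathcal{N}_{\mathrm{red}}$ is reduced, and that the local equation at a superspecial point is $u^{p+1}+v^{p+1}$ --- is precisely where the substantial work of [31], [32] lies.
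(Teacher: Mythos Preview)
Your proposal is correct and follows essentially the same route as the paper: parts (i)--(iii) are drawn from [6], [31], [32] via Rapoport--Zink uniformization and the Bruhat--Tits-tree description of $\mathcal{N}^{red}$, while (iv) is deduced from Serre's lemma making the relevant arithmetic groups torsion-free. The paper phrases (iv) slightly more sharply than you do: the stabilizer in $\Gamma_{j}=I(\Bbb{Q})\cap g_{j}^{-1}C^{p}g_{j}$ of any irreducible component of $\mathcal{N}^{red}$ is finite, hence trivial since $\Gamma_{j}$ is torsion-free, so $\Gamma_{j}$ acts freely on the \emph{set} of components---your phrasing ``act trivially on it'' is a consequence of this but slightly obscures the mechanism.
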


\begin{proof} Points (i) and (ii) follow from {[}32 (7.3)(b){]} ($d=3$)
and {[}31, Theorem 3{]}. The structure of $\widehat{\mathcal{O}}_{S_{ss},x}$
is also proved there, but we shall recover it in (\ref{ssp equation})
below.

Point (iii) is {[}31, Theorem 4{]}. Let $\mathcal{T}$ be the Bruhat-Tits
building of the group $G_{p}.$ This is a biregular tree, whose even
vertices are of degree $p^{3}+1,$ and whose odd vertices are of degree
$p+1.$ Fix a connected component $Z$ of $S_{ss}.$ Vollaard identifies
the incidence graph, whose even vertices correspond to the irreducible
components of $Z,$ and whose odd vertices correspond to the points
of $Z\cap S_{ssp}$ (edges denoting incidence relations) with the
quotient of $\mathcal{T}$ by a certain discrete cocompact arithmetic
subgroup of $G_{p}$. The condition on $N$ being large is necessary
to guarantee that the action of that group on $\mathcal{T}$ is ``good''.

Point (iv) is not explicitly stated in {[}31{]}, but follows from
the discussion there. Since this point is used later on, we explain
it here. We refer to the notation of {[}31, Section 6{]}. Our ``$N$
large'' condition is Vollaard's ``$C^{p}$ small''. This is used
by her in two ways. First, it is needed to guarantee that the groups
\begin{equation}
\Gamma_{j}=I(\Bbb{Q})\cap g_{j}^{-1}C^{p}g_{j}
\end{equation}
are torsion free. For this, it is enough to have $N\ge3,$ by Serre's
lemma. Second, Vollaard needs the assumption ``$C^{p}$ small''
in the proof of Theorem 6.1, to guarantee that the above-mentioned
action of $\Gamma_{j}$ on the tree $\mathcal{T}$ is good. For that,
she needs the distances $u(\Lambda,g\Lambda)$ to be at least 6 for
every $1\neq g\in\Gamma_{j}^{0}$ and $\Lambda\in\mathcal{L}_{0}.$
This measure of smallness translates to $N\ge N_{0}(p)$ where $N_{0}(p)$
depends on $p.$

If we only assume $N\ge3,$ then Theorem 6.1 of {[}31{]} does not
hold. However, $\Gamma_{j}$ still acts freely on the set of irreducible
components of $\mathcal{N}^{red}.$ Indeed, the stabilizer of any
given irreducible component is a finite subgroup of $\Gamma_{j}$,
hence trivial. It follows that while the irreducible components of
$\mathcal{N}^{red},$ which are all isomorphic to $\mathcal{C}_{p},$
may acquire self-intersections in $\Gamma_{j}\backslash\mathcal{N}^{red},$
their normalizations will still be $\mathcal{C}_{p}.$ \end{proof}

We call $\bar{S}_{\mu}=\bar{S}-S_{ss}$ (or $S_{\mu}=\bar{S}_{\mu}\cap S$)
the $\mu$\emph{-ordinary} or \emph{generic} locus, $S_{gss}=S_{ss}-S_{ssp}$
the \emph{general supersingular locus}, and $S_{ssp}$ the \emph{superspecial}
\emph{locus}. Then $\bar{S}=\bar{S}_{\mu}\cup S_{gss}\cup S_{ssp}$
is a stratification: the three strata are of pure dimension 2,1, and
0 respectively, the closure of each stratum contains the lower dimensional
ones, and each of the three is open in its closure.

\subsubsection{The $p$-divisible group}

Let $x:Spec(k)\rightarrow S$ ($k$ an algebraically closed field)
be a geometric point of $S,$ $\mathcal{A}_{x}$ the corresponding
fiber of $\mathcal{A}$, and $\mathcal{A}_{x}(p)$ its $p$-divisible
group. Let $\frak{G}$ be the $p$-divisible group of a supersingular
elliptic curve over $k$ (the group denoted by $G_{1,1}$ in the Manin-Dieudonné
classification). The following theorem can be deduced from {[}6{]}
and {[}31{]}.

\begin{theorem} (i) If $x\in S_{\mu}$ then 
\begin{equation}
\mathcal{A}_{x}(p)\simeq(\mathcal{O}_{\mathcal{K}}\otimes\mu_{p^{\infty}})\times\frak{G}\times(\mathcal{O}_{\mathcal{K}}\otimes\Bbb{Q}_{p}/\Bbb{Z}_{p}).
\end{equation}

(ii) If $x\in S_{ss}$ then $\mathcal{A}_{x}(p)$ is isogenous to
$\frak{G}^{3},$ and $x\in S_{ssp}$ if and only if the two groups
are isomorphic. \end{theorem}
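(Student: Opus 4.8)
The plan is to argue with Dieudonn\'e modules. Let $W=W(k)$ be the Witt vectors of $k$, $\sigma$ its Frobenius, and let $M$ be the (contravariant) Dieudonn\'e module of $\mathcal{A}_{x}(p)$: a free $W$-module of rank $6$ equipped with a $\sigma$-linear $F$, a $\sigma^{-1}$-linear $V$ with $FV=VF=p$, an action of $\mathcal{O}_{\mathcal{K}}\otimes\Bbb{Z}_{p}$ (the ring of integers of the unramified quadratic extension of $\Bbb{Q}_{p}$, since $p$ is inert), and a perfect alternating pairing $\langle\,,\,\rangle$ induced by $\lambda$, satisfying $\langle Fm,m'\rangle=\sigma\langle m,Vm'\rangle$ and $\langle\iota(a)m,m'\rangle=\langle m,\iota(\bar{a})m'\rangle$. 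Since $k\supseteq\Bbb{F}_{p^{2}}$, the two $W$-algebra embeddings $\Sigma,\bar{\Sigma}$ of $\mathcal{O}_{\mathcal{K}}\otimes\Bbb{Z}_{p}$ into $W$ split $M=M(\Sigma)\oplus M(\bar{\Sigma})$ as in (\ref{type}), with both summands free of rank $3$; because $\sigma$ interchanges $\Sigma$ and $\bar{\Sigma}$, both $F$ and $V$ carry $M(\Sigma)$ into $M(\bar{\Sigma})$ and conversely, and $\langle\,,\,\rangle$ puts $M(\Sigma)$ and $M(\bar{\Sigma})$ in perfect duality, each being isotropic. The type-$(2,1)$ condition of the moduli problem translates into the statement that the Hodge filtration (the image of $V$ in $M/pM$) meets $M(\Sigma)/pM(\Sigma)$ in a rank-$2$ subspace and $M(\bar{\Sigma})/pM(\bar{\Sigma})$ in a rank-$1$ subspace.

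For part (i) the idea is the following. If $x\in S_{\mu}$, then $\mathcal{A}_{x}$ is not supersingular, and for this PEL datum only two Newton polygons occur, the supersingular one and the $\mu$-ordinary one (see {[}6{]}; alternatively this can be read off directly from the isocrystal $M(\Sigma)[1/p]$ with its $\sigma^{2}$-linear operator $F^{2}$, the signature, and the symmetry imposed by $\lambda$), so the slopes of $\mathcal{A}_{x}(p)$ are $0,\frac{1}{2},1$, each with multiplicity $2$. Over the perfect field $k$ the connected--\'etale sequence splits, and applying this also to the Cartier dual splits off the multiplicative part; hence $\mathcal{A}_{x}(p)\simeq G^{\mathrm{mult}}\times G^{\mathrm{bi}}\times G^{\mathrm{et}}$, where $G^{\mathrm{mult}},G^{\mathrm{bi}},G^{\mathrm{et}}$ are the slope-$1$, slope-$\frac{1}{2}$, and slope-$0$ (\'etale) parts. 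These three splittings are canonical, hence respect $\iota$, and $\lambda$ pairs $G^{\mathrm{mult}}$ perfectly with $G^{\mathrm{et}}$ and restricts to a principal polarization on $G^{\mathrm{bi}}$. Now $G^{\mathrm{et}}$ is \'etale of height $2$ with $\mathcal{O}_{\mathcal{K}}\otimes\Bbb{Z}_{p}$-action, hence corresponds to an invertible $\mathcal{O}_{\mathcal{K}}\otimes\Bbb{Z}_{p}$-module, which is free because $\mathcal{O}_{\mathcal{K}}\otimes\Bbb{Z}_{p}$ is a discrete valuation ring; thus $G^{\mathrm{et}}\simeq\mathcal{O}_{\mathcal{K}}\otimes\Bbb{Q}_{p}/\Bbb{Z}_{p}$, and dually $G^{\mathrm{mult}}\simeq\mathcal{O}_{\mathcal{K}}\otimes\mu_{p^{\infty}}$. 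Finally $G^{\mathrm{bi}}$ is a $p$-divisible group of height $2$ and dimension $1$ over $k$, isoclinic of slope $\frac{1}{2}$, hence isomorphic to $\frak{G}$; it carries an $\mathcal{O}_{\mathcal{K}}\otimes\Bbb{Z}_{p}$-action since $\mathcal{O}_{\mathcal{K}}\otimes\Bbb{Z}_{p}$ embeds into the maximal order of $End(\frak{G})$, and this action is unique up to precomposition with conjugation, the correct choice being pinned down by the signature count. Assembling the three factors yields the asserted isomorphism. (A quicker route: by {[}6{]} the point $x$ lies in the $\mu$-ordinary stratum, the $\mu$-ordinary $p$-divisible group of a fixed PEL type is unique up to isomorphism over $k$ by a theorem of Moonen, and one then only checks that the displayed right-hand side, with its evident action and polarization, has signature $(2,1)$ and the $\mu$-ordinary Newton polygon.)

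For part (ii): if $x\in S_{ss}$ then $\mathcal{A}_{x}$ is supersingular and $\mathcal{A}_{x}(p)$ is isoclinic of slope $\frac{1}{2}$; since the isogeny category of such groups is semisimple with unique simple object $\frak{G}$ and $\mathcal{A}_{x}(p)$ has height $6$, it is isogenous to $\frak{G}^{3}$. For the last clause, recall from \thmref{Vollaard}(ii) that $x\in S_{ssp}$ if and only if $\mathcal{A}_{x}$ is superspecial, i.e. $\mathcal{A}_{x}\simeq E_{1}\times E_{2}\times E_{3}$ with the $E_{i}$ supersingular elliptic curves; in that case $\mathcal{A}_{x}(p)\simeq E_{1}(p)\times E_{2}(p)\times E_{3}(p)\simeq\frak{G}^{3}$. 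Conversely, if $\mathcal{A}_{x}(p)\simeq\frak{G}^{3}$ then its Dieudonn\'e module satisfies $FM=VM$ --- equivalently the $a$-number of $\mathcal{A}_{x}$ equals $3=\dim\mathcal{A}_{x}$ --- and by a theorem of Oort an abelian variety over $k$ whose $a$-number equals its dimension is superspecial, so $x\in S_{ssp}$.

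The routine parts are the semilinear algebra of the first paragraph and the splitting of the slope filtration over the perfect field $k$. The two steps needing genuine care are: in (i), tracking the $\mathcal{O}_{\mathcal{K}}\otimes\Bbb{Z}_{p}$-type of each of the three factors and verifying that $\lambda$ pairs them exactly as claimed --- that is, that the decomposition holds on the nose and not merely up to isogeny; and, in (ii), the implication ``$\mathcal{A}_{x}(p)\simeq\frak{G}^{3}$ implies $\mathcal{A}_{x}$ superspecial'', which rests on the nontrivial fact that maximal $a$-number forces superspeciality. I expect the former to be the main obstacle, since that is where the moduli-theoretic input --- the signature condition and the principal polarization --- has to be carefully transported into the Dieudonn\'e-module picture.
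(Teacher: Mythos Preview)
Your argument is correct. The paper itself does not supply a proof of this theorem but only states that it ``can be deduced from {[}6{]} and {[}31{]}'' (B\"ultel--Wedhorn and Vollaard); your Dieudonn\'e-module analysis is precisely the content one would extract from those references, and both the slope-filtration splitting in (i) and the invocation of Oort's criterion (maximal $a$-number implies superspecial) in (ii) are the standard and correct tools. Your self-identified ``main obstacle''---transporting the signature and polarization to the three factors in (i)---is indeed the only place requiring bookkeeping, but your sketch handles it adequately, and the alternative via Moonen's uniqueness of the $\mu$-ordinary $p$-divisible group is a clean shortcut.
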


While the $p$-divisible group of a $\mu$-ordinary geometric fiber
actually splits as a product of its multiplicative, local-local and
étale parts, over the whole of $S_{\mu}$ we only get a filtration
\begin{equation}
0\subset Fil^{2}\mathcal{A}(p)\subset Fil^{1}\mathcal{A}(p)\subset Fil^{0}\mathcal{A}(p)=\mathcal{A}(p)
\end{equation}
by $\mathcal{O}_{\mathcal{K}}$-stable $p$-divisible groups. Here
$gr^{2}=Fil^{2}$ is of multiplicative type, $gr^{1}=Fil^{1}/Fil^{2}$
is a local-local group and $gr^{0}=Fil^{0}/Fil^{1}$ is étale, each
of height $2$ ($\mathcal{O}_{\mathcal{K}}$-height 1).

\subsection{New relations between $\mathcal{P}$ and $\mathcal{L}$ in characteristic
$p$}

\subsubsection{The line bundles $\mathcal{P}_{0}$ and $\mathcal{P}_{\mu}$ over
$\bar{S}_{\mu}$}

Consider the universal semi-abelian variety $\mathcal{A}$ over the
Zariski open set $\bar{S}_{\mu}.$ Over the cuspidal divisor $C=\bar{S}-S,$
where $\mathcal{A}$ becomes an extension of a supersingular elliptic
curve $B$ (with $\mathcal{O}_{\mathcal{K}}$-signature $(1,0)$)
by a 2-dimensional torus (with$\,\mathcal{O}_{\mathcal{K}}$-signature
$(1,1)$), $\mathcal{P}=\omega_{\mathcal{A}}(\Sigma)$ admits a canonical
filtration 
\begin{equation}
0\rightarrow\mathcal{P}_{0}\rightarrow\mathcal{P}\rightarrow\mathcal{P}_{\mu}\rightarrow0.\label{Fil on P}
\end{equation}
Here $\mathcal{P}_{0}$ is the cotangent space to the abelian part
$B$ of $\mathcal{A},$ and $\mathcal{P}_{\mu}$ is the $\Sigma$-component
of the cotangent space to the toric part of $\mathcal{A}.$ This filtration
exists already in characteristic 0, but when we reduce the Picard
surface modulo $p$ it extends, as we now show, to the whole of $\bar{S}_{\mu}.$

Let $\mathcal{A}[p]^{0}$ be the connected part of the subgroup scheme
$\mathcal{A}[p]$ over $\bar{S}_{\mu}.$ Then $\mathcal{A}[p]^{0}$
is finite flat of rank $p^{4}.$ (It is clearly flat and quasi-finite,
and the fiber rank can be computed separately on $C$ and on $S_{\mu}.$
Since the rank is constant, the morphism to $\bar{S}_{\mu}$ is actually
finite, \emph{cf.} {[}9{]}, Lemme 1.19.) Let 
\begin{equation}
0\subset\mathcal{A}[p]^{\mu}\subset\mathcal{A}[p]^{0}
\end{equation}
be the maximal subgroup-scheme of multiplicative type. Since at every
closed point of $\bar{S}_{\mu}$, $\mathcal{A}[p]^{\mu}$ is of rank
$p^{2},$ this subgroup is also finite flat over $\bar{S}_{\mu}.$
It is also $\mathcal{O}_{\mathcal{K}}$-invariant. Over the cuspidal
divisor $C$, $\mathcal{A}[p]^{\mu}$ is the $p$-torsion in the toric
part of $\mathcal{A},$ and over $S_{\mu}$
\begin{equation}
\mathcal{A}[p]^{\mu}=\mathcal{A}[p]\cap Fil^{2}\mathcal{A}(p).
\end{equation}

As $\omega_{\mathcal{A}}$ is killed by $p,$ we have $\omega_{\mathcal{A}}=\omega_{\mathcal{A}[p]}=\omega_{\mathcal{A}[p]^{0}}$.
Let $\omega_{\mathcal{A}}^{\mu}=\omega_{\mathcal{A}[p]^{\mu}},$ a
rank-2 $\mathcal{O}_{\mathcal{K}}$-vector bundle of type $(1,1).$
The kernel of $\omega_{\mathcal{A}[p]^{0}}\rightarrow\omega_{\mathcal{A}[p]^{\mu}}$
is then a line bundle $\mathcal{P}_{0}$ of type $(1,0)$ and we get
the short exact sequence 
\begin{equation}
0\rightarrow\mathcal{P}_{0}\rightarrow\omega_{\mathcal{A}}\rightarrow\omega_{\mathcal{A}}^{\mu}\rightarrow0
\end{equation}
over the whole of $\bar{S}_{\mu}.$ Decomposing according to types
and setting $\mathcal{P}_{\mu}=\omega_{\mathcal{A}}^{\mu}(\Sigma),$
we get the desired filtration on $\mathcal{P}$.

\subsubsection{Frobenius and Verschiebung}

Let $\mathcal{A}^{(p)}$ be the base change of $\mathcal{A}$ with
respect to the absolute Frobenius morphism of degree $p$ of $\bar{S}.$
In other words, if we denote by $\phi$ the homomorphism $x\mapsto x^{p}$
(of any $\Bbb{F}_{p}$-algebra), and by $\Phi:\bar{S}\rightarrow\bar{S}$
the corresponding map of schemes (which is \emph{not }$\kappa_{0}$-linear),
then 
\begin{equation}
\mathcal{A}^{(p)}=\mathcal{A}\times_{\bar{S},\Phi}\bar{S}.
\end{equation}
If $\mathcal{M}$ is a coherent sheaf on $\mathcal{A}$, we denote
by $\mathcal{M}^{(p)}=\Phi^{*}\mathcal{M}$ its base-change to $\mathcal{A}^{(p)}.$
If $\mathcal{M}$ is endowed with an $\mathcal{O}_{\mathcal{K}}$-action,
so is, via base-change, $\mathcal{M}^{(p)}.$ However, if $\mathcal{M}$
is a vector bundle of type $(a,b)$ then $\mathcal{M}^{(p)}$ is of
type $(b,a)$, because $x\mapsto x^{p}$ interchanges $\Sigma$ with
$\overline{\Sigma}.$

The \emph{relative Frobenius} is an $\mathcal{O}_{\bar{S}}$-linear
isogeny $Frob_{\mathcal{A}}:\mathcal{A}\rightarrow\mathcal{A}^{(p)},$
characterized by the fact that $pr_{1}\circ Frob_{\mathcal{A}}$ is
the absolute Frobenius morphism of $\mathcal{A}.$ Over $S$ (but
not over the boundary $C$) we have the dual abelian scheme $\mathcal{A}^{t}$,
and the \emph{Verschiebung }$Ver_{\mathcal{A}}:\mathcal{A}^{(p)}\rightarrow\mathcal{A}$
is the $\mathcal{O}_{S}$-linear isogeny which is dual to $Frob_{\mathcal{A}^{t}}:\mathcal{A}^{t}\rightarrow(\mathcal{A}^{t})^{(p)}.$

We clearly have $\omega_{\mathcal{A}^{(p)}}=\omega_{\mathcal{A}}^{(p)}$,
and we let 
\begin{equation}
F:\omega_{\mathcal{A}}^{(p)}\rightarrow\omega_{\mathcal{A}},\,\,\,V:\omega_{\mathcal{A}}\rightarrow\omega_{\mathcal{A}}^{(p)}
\end{equation}
be the $\mathcal{O}_{\bar{S}}$-linear maps of vector bundles induced
by the isogenies $Frob_{\mathcal{A}}$ and $Ver_{\mathcal{A}}$ on
the cotangent spaces. Note, however, that while $F$ is defined everywhere$,$
$V$ is so far defined only over $S.$

To extend the definition of $V$ over $\bar{S}$ we consider the finite
flat group scheme $\mathcal{G}=\mathcal{A}[p]^{0}$ over $\bar{S}_{\mu}$
as in the previous subsection. We now have the $\mathcal{O}_{\bar{S}}$-linear
homomorphism $Ver_{\mathcal{G}}:\mathcal{G}^{(p)}\rightarrow\mathcal{G}$
also over the boundary. Under Cartier duality $Ver_{\mathcal{G}}$
is dual to $Frob_{\mathcal{G}^{D}},$ where we denote by $\mathcal{G}^{D}$
the Cartier dual of $\mathcal{G}.$ Over $S_{\mu}$ it coincides with
the homomorphism induced by $Ver_{\mathcal{A}}.$ This follows at
once from the identification of $\mathcal{A}[p]^{D}$ with $\mathcal{A}^{t}[p]$
(Weil pairing).

Recall that $\omega_{\mathcal{G}}=\omega_{\mathcal{A}},$ and similarly
$\omega_{\mathcal{G}^{(p)}}=\omega_{\mathcal{A}^{(p)}}=\omega_{\mathcal{A}}^{(p)}.$
The morphism $Ver_{\mathcal{G}}:\mathcal{G}^{(p)}\rightarrow\mathcal{G}$
therefore induces a homomorphism of vector bundles over $\bar{S}_{\mu},$
$V:\omega_{\mathcal{A}}\rightarrow\omega_{\mathcal{A}}^{(p)}$, which
extends the previously defined map $V$ to $\bar{S}.$

Taking $\Sigma$-components we get 
\begin{equation}
V:\mathcal{P}=\omega_{\mathcal{A}}(\Sigma)\rightarrow\omega_{\mathcal{A}}^{(p)}(\Sigma)=\omega_{\mathcal{A}}(\bar{\Sigma})^{(p)}=\mathcal{L}^{(p)}.
\end{equation}

Over $\bar{S}_{\mu}$ this map fits in a commutative diagram 
\begin{equation}
\begin{array}{lllllllll}
0 & \leftarrow & \omega_{\mathcal{A}}^{\mu}(\Sigma)=\mathcal{P}_{\mu} & \leftarrow & \mathcal{P} & \leftarrow & \mathcal{P}_{0} & \leftarrow & 0\\
 &  & \downarrow\,\simeq &  & \downarrow V &  & \downarrow\\
0 & \leftarrow & (\omega_{\mathcal{A}}^{\mu}(\bar{\Sigma}))^{(p)} & \leftarrow & \mathcal{L}^{(p)} & \leftarrow & 0 & \leftarrow & 0
\end{array}.
\end{equation}
The right vertical arrow is 0 since $V$ kills $\mathcal{P}_{0},$
as $\mathcal{A}[p]^{0}/\mathcal{A}[p]^{\mu}$ is of local-local type,
hence $Ver_{\mathcal{G}}$ acts on it nilpotently. The left vertical
map is an isomorphism, since $Ver$ is an isomorphism on $p$-divisible
groups of multiplicative type. We conclude that over $\bar{S}_{\mu}$
\begin{equation}
\mathcal{P}_{0}=\ker(V:\mathcal{P}\rightarrow\mathcal{L}^{(p)}).
\end{equation}

\subsubsection{Relations between $\mathcal{P}_{0},\mathcal{P}_{\mu}$ and $\mathcal{L}$
over $\bar{S}_{\mu}$}

We first recall a general lemma.

\begin{lemma} \label{M^p}Let $\mathcal{M}$ be a line bundle over
a scheme $S$ in characteristic $p.$ Let $\Phi:S\rightarrow S$ be
the absolute Frobenius and $\mathcal{M}^{(p)}=\Phi^{*}\mathcal{M}.$
Then the map $\mathcal{M}^{(p)}\rightarrow\mathcal{M}^{p}$
\begin{equation}
a\otimes m\mapsto a\cdot m\otimes\cdots\otimes m
\end{equation}
is an isomorphism of line bundles over $S.$ \end{lemma}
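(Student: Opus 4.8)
The plan is to write this as a purely local statement about line bundles over an affine chart and then glue. First I would reduce to the affine case: since both $\mathcal{M}^{(p)}=\Phi^*\mathcal{M}$ and $\mathcal{M}^p=\mathcal{M}^{\otimes p}$ are line bundles on $S$, and the proposed rule $a\otimes m\mapsto a\cdot(m\otimes\cdots\otimes m)$ is manifestly $\mathcal{O}_S$-linear and natural in $S$, it suffices to exhibit it as an isomorphism after restricting to each open $U=\operatorname{Spec} R$ of an affine cover of $S$ over which $\mathcal{M}$ is trivial; naturality guarantees the local isomorphisms patch to a global one. So fix $U=\operatorname{Spec} R$ with $R$ an $\mathbb{F}_p$-algebra and a trivializing section $m$ of $\mathcal{M}$ over $U$, i.e.\ $\mathcal{M}|_U\simeq R\cdot m$.

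Next I would spell out what $\Phi^*\mathcal{M}$ is over $U$. The absolute Frobenius $\Phi:U\to U$ corresponds to the ring map $\phi:R\to R$, $r\mapsto r^p$, so $\Phi^*\mathcal{M}|_U = \mathcal{M}|_U\otimes_{R,\phi}R$, where the left factor is viewed as an $R$-module through $\phi$. With the trivialization this is the free rank-one module $(R\cdot m)\otimes_{R,\phi}R$, on which a general element is $m\otimes_{R,\phi}a$ with $a\in R$, and the $R$-module structure is $r\cdot(m\otimes_{R,\phi}a)=m\otimes_{R,\phi}(ra)$ (the subtlety being that $rm\otimes_{R,\phi}a = m\otimes_{R,\phi}(r^p a)$). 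Meanwhile $\mathcal{M}^{\otimes p}|_U$ is free of rank one on $m^{\otimes p}:=m\otimes\cdots\otimes m$. The proposed map sends $m\otimes_{R,\phi}a\mapsto a\cdot m^{\otimes p}$. I would check directly that this is well defined and $R$-linear: $R$-linearity is immediate from the description of the module structure, and independence of the choice of $m$ follows because replacing $m$ by a unit multiple $um$ rescales $m\otimes_{R,\phi}a$ by $u^p$ on the source (since the left factor is twisted by $\phi$) and rescales $m^{\otimes p}$ by $u^p$ on the target, so the two cancel — this is exactly where naturality/gluing is legitimate. Finally, on the trivialized charts the map carries the basis element $m\otimes_{R,\phi}1$ to the basis element $m^{\otimes p}$, hence is an isomorphism of free $R$-modules, and therefore $\Phi^*\mathcal{M}\to\mathcal{M}^{\otimes p}$ is an isomorphism of line bundles on $S$.

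The only genuinely delicate point — and the one I would be careful about — is the well-definedness of the claimed formula as a map of the \emph{abstract} sheaves rather than just of trivialized models: one must verify that the Frobenius twist in $\Phi^*\mathcal{M}$ is precisely what absorbs the $p$-th power appearing when a local generator is rescaled, so that the local isomorphisms are compatible on overlaps. Everything else is formal: $\mathcal{O}_S$-linearity, naturality, and the observation that an $\mathcal{O}_S$-linear map of line bundles which is an isomorphism on a cover is an isomorphism. No hypotheses beyond $S$ being an $\mathbb{F}_p$-scheme are used.
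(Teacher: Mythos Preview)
Your argument is correct. The paper does not actually supply a proof of this lemma; it is introduced with the phrase ``We first recall a general lemma'' and then used immediately, so there is nothing to compare against. Your local-trivialization argument is the standard one and is complete: the key point, which you identify, is that under a change of local generator $m\mapsto um$ both the source $\Phi^{*}\mathcal{M}$ and the target $\mathcal{M}^{\otimes p}$ rescale by $u^{p}$, so the locally defined maps glue. One small remark: the well-definedness of $m\otimes a\mapsto a\,m^{\otimes p}$ as a map out of the tensor product also uses additivity in $m$, which for a rank-one module reduces to the freshman's dream $(r_{1}+r_{2})^{p}=r_{1}^{p}+r_{2}^{p}$ in characteristic $p$; this is implicit in your setup since you work throughout with a single generator, but it is worth saying explicitly.
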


Since $\mathcal{L}^{(p)}\simeq\mathcal{L}^{p}$ by the lemma, we have
\begin{equation}
\mathcal{L}^{p}\simeq\mathcal{P}/\mathcal{P}_{0}=\mathcal{P}_{\mu}.
\end{equation}
Finally, from $\mathcal{P}_{0}\otimes\mathcal{P}_{\mu}\simeq\det\mathcal{P}\simeq\mathcal{L}$
we get 
\begin{equation}
\mathcal{P}_{0}\simeq\mathcal{L}^{1-p}.
\end{equation}
We have proved:

\begin{proposition} Over $\bar{S}_{\mu},$ $\mathcal{P}_{\mu}\simeq\mathcal{L}^{p}$
and $\mathcal{P}_{0}\simeq\mathcal{L}^{1-p}.$ \end{proposition}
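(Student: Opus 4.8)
The plan is to assemble the proposition from three ingredients that have essentially all been set up already. First I would record the short exact sequence
\begin{equation}
0\rightarrow\mathcal{P}_{0}\rightarrow\mathcal{P}\rightarrow\mathcal{P}_{\mu}\rightarrow0
\end{equation}
over $\bar{S}_{\mu}$, together with the identification $\mathcal{P}_{0}=\ker(V\colon\mathcal{P}\rightarrow\mathcal{L}^{(p)})$ established in the previous subsection. Since $V$ kills $\mathcal{P}_{0}$ and the induced map $\mathcal{P}_{\mu}\to(\omega_{\mathcal{A}}^{\mu}(\bar\Sigma))^{(p)}$ is an isomorphism (the left vertical arrow of the commutative diagram, coming from the fact that Verschiebung is an isomorphism on $p$-divisible groups of multiplicative type), $V$ factors as $\mathcal{P}\twoheadrightarrow\mathcal{P}_{\mu}\xrightarrow{\sim}\mathcal{L}^{(p)}$. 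So I get a canonical isomorphism $\mathcal{P}_{\mu}\simeq\mathcal{L}^{(p)}$.

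Second, I would invoke Lemma~\ref{M^p} to replace $\mathcal{L}^{(p)}$ by $\mathcal{L}^{p}$, giving $\mathcal{P}_{\mu}\simeq\mathcal{L}^{p}$. Third, for the other line bundle I would take determinants of the short exact sequence above: $\det\mathcal{P}\simeq\mathcal{P}_{0}\otimes\mathcal{P}_{\mu}$. Now I use $\det\mathcal{P}\simeq\mathcal{L}$ — here I would cite the Corollary to Proposition~\ref{1.3} (valid on every fibre $\bar{S}_{k}$, hence in particular on $\bar{S}_{\kappa}$), so no new subtlety about the $R_{0}$-integral structure arises. Combining, $\mathcal{P}_{0}\simeq\mathcal{L}\otimes\mathcal{P}_{\mu}^{-1}\simeq\mathcal{L}\otimes\mathcal{L}^{-p}=\mathcal{L}^{1-p}$, which is the second assertion.

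Honestly there is no real obstacle here: every step is either a formal consequence of the exact sequence and the determinant identity, or a direct appeal to a result already proved in the excerpt (the structure of $V$ on $\mathcal{A}[p]^{0}$, Lemma~\ref{M^p}, and $\det\mathcal{P}\simeq\mathcal{L}$). The one point that deserves a sentence of care is the passage from $\mathcal{L}^{(p)}$ to $\mathcal{L}^{p}$: one should note that the isomorphism of Lemma~\ref{M^p} is the canonical one and is compatible with the $\mathcal{O}_{\mathcal{K}}$-action after the type swap, so that identifying $\omega_{\mathcal{A}}(\bar\Sigma)^{(p)}$ with $\mathcal{L}^{p}$ is legitimate. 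Once that is said, the proof is a two-line chain of isomorphisms, and I would write it as such.
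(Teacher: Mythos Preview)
Your proposal is correct and follows essentially the same route as the paper: identify $\mathcal{P}_{\mu}$ with $\mathcal{L}^{(p)}$ via the factorization of $V$ through $\mathcal{P}/\mathcal{P}_{0}$, invoke Lemma~\ref{M^p} to pass to $\mathcal{L}^{p}$, and then use $\det\mathcal{P}\simeq\mathcal{P}_{0}\otimes\mathcal{P}_{\mu}\simeq\mathcal{L}$ to read off $\mathcal{P}_{0}\simeq\mathcal{L}^{1-p}$. The paper presents these steps in the discussion immediately preceding the proposition and treats the proposition as a summary of what was just proved.
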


In the same vein we get a commutative diagram for the $\bar{\Sigma}$
parts 
\begin{equation}
\begin{array}{lllllllll}
0 & \leftarrow & \omega_{\mathcal{A}}^{\mu}(\bar{\Sigma}) & \leftarrow & \mathcal{L} & \leftarrow & 0 & \leftarrow & 0\\
 &  & \downarrow\,\simeq &  & \downarrow V &  & \downarrow\\
0 & \leftarrow & (\omega_{\mathcal{A}}^{\mu}(\Sigma))^{(p)} & \leftarrow & \mathcal{P}^{(p)} & \leftarrow & \mathcal{P}_{0}^{(p)} & \leftarrow & 0
\end{array}\label{P^p}
\end{equation}
and deduce that $V$ is injective on $\mathcal{L}$ and 
\begin{equation}
\mathcal{P}^{(p)}=\mathcal{P}_{0}^{(p)}\oplus V(\mathcal{L}).
\end{equation}
Thus over $\bar{S}_{\mu},$ $\mathcal{P}$ has a canonical filtration
by $\mathcal{P}_{0},$ but the induced filtration on $\mathcal{P}^{(p)}$
already splits as a direct sum.

\begin{remark} Restricting attention to a connected component $E$
of $C,$ $\mathcal{P}|_{E}$ is a \emph{non-split} extension of $\mathcal{P}_{\mu}$
by $\mathcal{P}_{0}.$ However, both $\mathcal{P}_{\mu}$ and $\mathcal{P}_{0}$
are trivial on $E,$ so the extension is described by a non-zero class
$\xi\in$ $H^{1}(E,\mathcal{O}_{E}).$ The extension $\mathcal{P}^{(p)}|_{E}$
is then described by $\xi^{(p)}.$ The semilinear map $\xi\mapsto\xi^{(p)}$
is the Cartier-Manin operator, and since $E$ is a supersingular elliptic
curve, $\xi^{(p)}=0$ and $\mathcal{P}^{(p)}|_{E}$ splits. Thus at
least over $C,$ the splitting of $\mathcal{P}^{(p)}$ is consistent
with what we already know. \end{remark}

Since $V$ induces an isomorphism of $\mathcal{L}$ onto $\mathcal{P}^{(p)}/\mathcal{P}_{0}^{(p)}\simeq(\mathcal{P}/\mathcal{P}_{0})^{p}$
and $\mathcal{P}/\mathcal{P}_{0}\simeq\mathcal{L}^{p}$ we conclude
that over $\bar{S}_{\mu},$ $\mathcal{L}\simeq\mathcal{L}^{p^{2}}.$
In the next section we realize this isomorphism via the Hasse invariant.
Combining what was proved so far we get the following.

\begin{proposition} Over $\bar{S}_{\mu}$, $\mathcal{L}^{p^{2}}\simeq\mathcal{L}$.
For $k\ge1$ odd, $\mathcal{P}^{(p^{k})}\simeq\mathcal{L}^{p-1}\oplus\mathcal{L}.$
For $k\ge2$ even, $\mathcal{P}^{(p^{k})}\simeq\mathcal{L}^{1-p}\oplus\mathcal{L}^{p},$
but for $k=0$ we only have an exact sequence 
\begin{equation}
0\rightarrow\mathcal{L}^{1-p}\rightarrow\mathcal{P}\rightarrow\mathcal{L}^{p}\rightarrow0.
\end{equation}
\end{proposition}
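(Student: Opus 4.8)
The plan is to assemble the proposition from the isomorphisms already established over $\bar{S}_{\mu}$, keeping careful track of how the type decomposition interacts with repeated Frobenius twists. First I would record the base case: the short exact sequence $0\rightarrow\mathcal{P}_{0}\rightarrow\mathcal{P}\rightarrow\mathcal{P}_{\mu}\rightarrow0$ together with $\mathcal{P}_{0}\simeq\mathcal{L}^{1-p}$ and $\mathcal{P}_{\mu}\simeq\mathcal{L}^{p}$ is exactly the stated $k=0$ assertion, and the non-splitting along $C$ (the remark above) is the reason one cannot do better. Then I would observe that $\mathcal{L}^{p^{2}}\simeq\mathcal{L}$ over $\bar{S}_{\mu}$ was derived just above from the fact that $V$ identifies $\mathcal{L}$ with $\mathcal{P}^{(p)}/\mathcal{P}_{0}^{(p)}\simeq(\mathcal{P}/\mathcal{P}_{0})^{p}\simeq\mathcal{L}^{p^{2}}$; this is the first sentence of the proposition and everything else is bootstrapped from it.

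For the odd case $k\ge1$, I would twist the diagram (\ref{P^p}) by the appropriate power of Frobenius, or more efficiently just apply $\Phi^{*}$ to the splitting $\mathcal{P}^{(p)}=\mathcal{P}_{0}^{(p)}\oplus V(\mathcal{L})$ already proved. Since taking $(p)$ twists interchanges the two types but Lemma~\ref{M^p} lets us replace each Frobenius twist of a line bundle by its $p$-th tensor power, I get $\mathcal{P}^{(p)}\simeq\mathcal{P}_0^{(p)}\oplus \mathcal{L}^{(p)}\simeq \mathcal{L}^{(1-p)p}\oplus\mathcal{L}^{p}\simeq\mathcal{L}^{p-p^2}\oplus\mathcal{L}^p$, and now invoking $\mathcal{L}^{p^2}\simeq\mathcal{L}$ turns $\mathcal{L}^{p-p^2}$ into $\mathcal{L}^{p-1}$ and $\mathcal{L}^p$ into... here one must be slightly careful: $\mathcal{L}^{p^2}\simeq\mathcal{L}$ gives $\mathcal{L}^{p}\simeq\mathcal{L}^{1/p}$ only formally, so the cleaner route is to note $\mathcal{L}^{p}\otimes\mathcal{L}^{p^2}\simeq\mathcal{L}^p\otimes\mathcal{L}$, i.e. work in the quotient group $\mathrm{Pic}(\bar{S}_\mu)/(\mathcal{L}^{p^2-1})$ where the exponent of $\mathcal{L}$ is well-defined modulo $p^2-1$. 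Then $p^2-p\equiv p-1$ and $p\equiv p$ are the residues, but one also checks $p\equiv -p^2\cdot p^{-1}$... rather than chase this, I would simply verify the two residues $\{p-1,\,p\}$ directly against $\{1-p,\,p\}$ using $p-1\equiv 1-p\pmod{p^2-1}$? No — that is false. So the honest statement is that for odd $k$ the twist sends the pair $\{1-p,p\}$ to $\{p(1-p),p\cdot p\}=\{p-p^2,p^2\}\equiv\{p-1,1\}\pmod{p^2-1}$ after reducing using $\mathcal{L}^{p^2}\simeq\mathcal{L}$; wait, $p^2\equiv 1$, giving $\{p-1,1\}$, but the proposition claims $\{p-1,p\}$ — so I would double-check the indexing of which twist the proposition's $\mathcal{P}^{(p^k)}$ refers to, and in the writeup present the induction $\mathcal{P}^{(p^{k+1})}\simeq(\mathcal{P}^{(p^k)})^{(p)}$ applied to the two line-bundle summands, which is unambiguous.

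The cleanest organization, which I would actually write, is: (1) state $k=0$ as the given filtration; (2) for $k=1$, quote the splitting $\mathcal{P}^{(p)}=\mathcal{P}_0^{(p)}\oplus V(\mathcal{L})$ and identify the summands as $\mathcal{L}^{p}$ and $\mathcal{L}^{1-p}$ up to the relation $\mathcal{L}^{p^2}\simeq\mathcal{L}$, matching the odd-$k$ formula $\mathcal{L}^{p-1}\oplus\mathcal{L}$ (recording that $p\equiv 1$ and $1-p\equiv ?$ forces the bookkeeping to be done modulo $p^2-1$, or simply noting $\mathcal{L}^{1-p}\simeq\mathcal{L}^{1-p}$ and $\mathcal{L}^p\simeq\mathcal{L}\cdot\mathcal{L}^{p-1}$ and absorbing $\mathcal{L}^{p-1}$); (3) induct: $\mathcal{P}^{(p^{k+1})}=(\mathcal{P}^{(p^k)})^{(p)}$, apply $\Phi^*$ to each summand, use Lemma~\ref{M^p} to replace $\mathcal{M}^{(p)}$ by $\mathcal{M}^p$, and use $\mathcal{L}^{p^2}\simeq\mathcal{L}$ to reduce exponents; the parity of $k$ flips each step and the two stated forms $\mathcal{L}^{p-1}\oplus\mathcal{L}$ and $\mathcal{L}^{1-p}\oplus\mathcal{L}^{p}$ are interchanged. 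The main obstacle — really the only subtlety — is the exponent bookkeeping: the relation $\mathcal{L}^{p^2}\simeq\mathcal{L}$ means exponents live in $\mathbb{Z}/(p^2-1)$, and one must confirm that $p\cdot(1-p)\equiv p-1$ and $p\cdot p\equiv 1$ (indeed $p^2\equiv 1$) so that one twist carries $\{1-p,\,p\}$ to $\{p-1,\,1\}=\{p-1,\,p^0\}$, and I would present the proposition's ``$\mathcal{L}^{p-1}\oplus\mathcal{L}$'' with the understanding that $\mathcal{L}\simeq\mathcal{L}^{p^2}$, so the reader sees the two mod-$(p^2-1)$ orbit representatives clearly; no deep input beyond the already-proven splittings and Lemma~\ref{M^p} is needed, so the proof is short.
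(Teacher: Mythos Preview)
Your approach is the paper's: the proposition there carries no separate proof beyond ``combining what was proved so far,'' and the assembly you outline (the filtration for $k=0$, the splitting $\mathcal{P}^{(p)}=\mathcal{P}_0^{(p)}\oplus V(\mathcal{L})$ for $k=1$, then induct by twisting and reducing exponents via $\mathcal{L}^{p^2}\simeq\mathcal{L}$) is exactly right.

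Your bookkeeping derails because of one concrete slip. You write $\mathcal{P}^{(p)}\simeq\mathcal{P}_0^{(p)}\oplus\mathcal{L}^{(p)}$, but the second summand is $V(\mathcal{L})\simeq\mathcal{L}$, not $\mathcal{L}^{(p)}$: the map $V:\mathcal{L}\to\mathcal{P}^{(p)}$ is an $\mathcal{O}_{\bar{S}_\mu}$-linear injection, so its image is isomorphic to its source. Equivalently, the quotient is $\mathcal{P}_\mu^{(p)}\simeq(\mathcal{L}^p)^{(p)}\simeq\mathcal{L}^{p^2}\simeq\mathcal{L}$, not $\mathcal{L}^p$. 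With this correction $\mathcal{P}^{(p)}\simeq\mathcal{L}^{p-1}\oplus\mathcal{L}$ drops out immediately, and your own second computation $\{p(1-p),\,p\cdot p\}\equiv\{p-1,\,1\}\pmod{p^2-1}$ already got this right. You then misread the proposition as asserting $\{p-1,p\}$; it says $\mathcal{L}^{p-1}\oplus\mathcal{L}$, i.e.\ exponents $\{p-1,1\}$, matching your correct computation. The induction is then unambiguous: twisting $\mathcal{L}^{p-1}\oplus\mathcal{L}$ gives $\mathcal{L}^{p^2-p}\oplus\mathcal{L}^p\simeq\mathcal{L}^{1-p}\oplus\mathcal{L}^p$, and one more twist returns to $\mathcal{L}^{p-1}\oplus\mathcal{L}$. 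No further subtlety is needed; the long paragraph of hedging can be deleted once the $V(\mathcal{L})\simeq\mathcal{L}$ error is fixed.
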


\begin{corollary} Over $\bar{S}_{\mu},$ $\mathcal{L}^{p^{2}-1},\mathcal{P}_{\mu}^{p^{2}-1}$
and $\mathcal{P}_{0}^{p+1}$ are trivial line bundles. \end{corollary}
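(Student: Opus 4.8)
The plan is to deduce the corollary directly from the isomorphisms of the preceding proposition, which already identifies $\mathcal{L}^{p^2}\simeq\mathcal{L}$ over $\bar S_\mu$, together with the formulas $\mathcal{P}_\mu\simeq\mathcal{L}^p$ and $\mathcal{P}_0\simeq\mathcal{L}^{1-p}$ from the proposition before it. First I would observe that $\mathcal{L}^{p^2}\simeq\mathcal{L}$ immediately gives $\mathcal{L}^{p^2-1}\simeq\mathcal{O}_{\bar S_\mu}$, so $\mathcal{L}^{p^2-1}$ is trivial; this is the base case and requires nothing beyond the previous proposition.

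Next I would handle $\mathcal{P}_\mu^{p^2-1}$: since $\mathcal{P}_\mu\simeq\mathcal{L}^p$ over $\bar S_\mu$, raising to the $(p^2-1)$ power gives $\mathcal{P}_\mu^{p^2-1}\simeq\mathcal{L}^{p(p^2-1)}=(\mathcal{L}^{p^2-1})^{p}\simeq\mathcal{O}_{\bar S_\mu}$, again using only the already-established triviality of $\mathcal{L}^{p^2-1}$. Finally, for $\mathcal{P}_0^{p+1}$ I would use $\mathcal{P}_0\simeq\mathcal{L}^{1-p}$, so $\mathcal{P}_0^{p+1}\simeq\mathcal{L}^{(1-p)(1+p)}=\mathcal{L}^{1-p^2}=(\mathcal{L}^{p^2-1})^{-1}\simeq\mathcal{O}_{\bar S_\mu}$.

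In short, all three statements are purely formal consequences of the line-bundle isomorphisms already in hand — there is essentially no obstacle here, the only point to be careful about is that these isomorphisms hold over $\bar S_\mu$ (not all of $\bar S$), which is exactly the setting of the corollary. I would write the proof as a two-line computation invoking the two preceding propositions, noting that $\det\mathcal{P}\simeq\mathcal{L}$ (used to derive $\mathcal{P}_0\simeq\mathcal{L}^{1-p}$) is itself only known up to a twist by a fractional ideal of $R_0$, but that such a twist is invisible after restriction to the characteristic-$p$ fiber and in any case does not affect the $(p^2-1)$-st or $(p+1)$-st tensor powers in the relevant way; alternatively one bypasses this by working directly with the isomorphisms $\mathcal{P}_\mu\simeq\mathcal{L}^p$ and $\mathcal{P}_0\simeq\mathcal{L}^{1-p}$ as stated in the proposition.
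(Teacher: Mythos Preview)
Your proposal is correct and is exactly the intended argument: the paper states the corollary without proof immediately after the proposition $\mathcal{L}^{p^2}\simeq\mathcal{L}$ and the earlier identifications $\mathcal{P}_\mu\simeq\mathcal{L}^p$, $\mathcal{P}_0\simeq\mathcal{L}^{1-p}$, from which the three trivialities follow by the tensor-power manipulations you wrote down. Your caveat about the fractional-ideal twist is harmless but unnecessary here, since in this section we are already on the characteristic-$p$ fiber where the paper has established $\det\mathcal{P}\simeq\mathcal{L}$ outright.
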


\subsubsection{Extending the filtration on $\mathcal{P}$ over $S_{gss}$}

In order to determine to what extent the filtration on $\mathcal{P}$
and the relation between $\mathcal{L}$ and the two graded pieces
of the filtration extend into the supersingular locus, we have to
employ Dieudonné theory.

\begin{proposition} Let $\mathcal{P}_{0}=\ker(V:\mathcal{P}\rightarrow\mathcal{L}^{(p)}).$
Then over the whole of $\bar{S}-S_{ssp},$ $V(\mathcal{P})=\mathcal{L}^{(p)}$
and $\mathcal{P}_{0}$ is a rank 1 submodule. Let $\mathcal{P}_{\mu}=\mathcal{P}/\mathcal{P}_{0}.$
Then $\mathcal{P}_{\mu}\simeq\mathcal{L}^{p},$ $\mathcal{P}_{0}\simeq\mathcal{L}^{1-p}$
and the filtration (\ref{Fil on P}) is valid there. \end{proposition}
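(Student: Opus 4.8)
The plan is to extend the statements already proved over $\bar{S}_{\mu}$ into the general supersingular locus $S_{gss}=S_{ss}-S_{ssp}$ by a pointwise argument using Dieudonn\'e theory, and then to globalize. First I would recall that $\mathcal{P}_{0}=\ker(V:\mathcal{P}\to\mathcal{L}^{(p)})$ is a coherent subsheaf of the rank-2 bundle $\mathcal{P}$, and that away from $S_{ssp}$ we want to show it is locally free of rank $1$ with $V(\mathcal{P})=\mathcal{L}^{(p)}$. By semicontinuity of the rank of the fibers of $V$, it suffices to check at each geometric point $x$ of $S_{gss}$ that the induced map on cotangent spaces has rank exactly $1$, i.e. is neither zero nor an isomorphism. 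This is where Dieudonn\'e theory enters: for $x\in S_{gss}$ the $p$-divisible group $\mathcal{A}_x(p)$ is isogenous but not isomorphic to $\mathfrak{G}^3$, and one reads off the action of Verschiebung on $\omega_{\mathcal{A}_x}=\omega_{\mathcal{A}_x}(\Sigma)\oplus\omega_{\mathcal{A}_x}(\bar\Sigma)$ from the explicit Dieudonn\'e module of the standard local model at a general supersingular point, as computed in \cite{6} and \cite{31}. The $\mathcal{O}_{\mathcal{K}}$-action splits the Dieudonn\'e module into its two types; the signature $(2,1)$ condition fixes the dimensions, and the non-superspecial condition forces $V$ restricted to $\mathcal{P}_x$ to have a one-dimensional kernel (rather than being identically zero, which is exactly the superspecial case). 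So $V(\mathcal{P}_x)$ has dimension $1$ and hence equals $\mathcal{L}_x^{(p)}$, which is one-dimensional.

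Having established that the fiber rank of $V|_{\mathcal{P}}$ is the constant value $1$ on all of $\bar{S}-S_{ssp}$, standard commutative algebra (a map of vector bundles of constant fiber rank has locally free kernel and image, and the image is a subbundle) shows that $\mathcal{P}_0$ is a line subbundle of $\mathcal{P}$, that $V$ induces an isomorphism $\mathcal{P}/\mathcal{P}_0 \xrightarrow{\sim} V(\mathcal{P})=\mathcal{L}^{(p)}$, and that $\mathcal{P}_{\mu}:=\mathcal{P}/\mathcal{P}_0$ is a line bundle fitting into the exact sequence $0\to\mathcal{P}_0\to\mathcal{P}\to\mathcal{P}_\mu\to0$. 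Then, applying Lemma \ref{M^p} to get $\mathcal{L}^{(p)}\simeq\mathcal{L}^p$, we obtain $\mathcal{P}_\mu\simeq\mathcal{L}^p$ over $\bar{S}-S_{ssp}$. For $\mathcal{P}_0$ I would take determinants in the short exact sequence to get $\mathcal{P}_0\otimes\mathcal{P}_\mu\simeq\det\mathcal{P}$, and invoke the Corollary to Proposition \ref{1.3} (valid on the special fiber $\bar{S}_\kappa$) giving $\det\mathcal{P}\simeq\mathcal{L}$; combined with $\mathcal{P}_\mu\simeq\mathcal{L}^p$ this yields $\mathcal{P}_0\simeq\mathcal{L}\otimes\mathcal{L}^{-p}=\mathcal{L}^{1-p}$. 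Finally I would note that over the cuspidal part of $\bar{S}_\mu$ and over $S_\mu$ this filtration agrees with the one constructed in the previous subsections, so the extension is consistent and $(\ref{Fil on P})$ indeed holds over all of $\bar{S}-S_{ssp}$.

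The main obstacle is the Dieudonn\'e-theoretic input at a general supersingular point: one has to extract from the work of B\"ultel--Vollaard--Wedhorn the precise shape of the Dieudonn\'e module (equivalently, the local model) at a point of $S_{gss}$, and verify that $V$ acts on the $\Sigma$-part of $\omega$ with a genuinely $1$-dimensional kernel --- the point being to rule out, away from $S_{ssp}$, the degeneration to rank $0$ (superspecial) without a separate direct computation. I expect this to reduce to an inspection of the $2$-dimensional unitary Rapoport--Zink local model, where the condition defining $S_{ssp}$ as the locus $u^{p+1}+v^{p+1}=0$ inside the local ring (Theorem \ref{Vollaard}(ii)) translates exactly into the vanishing of $V$ on $\mathcal{P}$, so that on the complement $V|_{\mathcal P}$ automatically has rank $1$. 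Once that local statement is in hand, everything else is the routine sheaf-theoretic and determinant bookkeeping sketched above.
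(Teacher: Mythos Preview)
Your approach is essentially the paper's: reduce to showing that $V:\mathcal{P}\to\mathcal{L}^{(p)}$ is surjective at each geometric point of $S_{gss}$ via Dieudonn\'e theory, then derive the rest formally from $\det\mathcal{P}\simeq\mathcal{L}$ and $\mathcal{L}^{(p)}\simeq\mathcal{L}^{p}$. The only place your sketch drifts is the final paragraph: the equation $u^{p+1}+v^{p+1}=0$ is the local equation of $S_{ss}$ in a neighborhood of a \emph{superspecial} point and does not by itself give you the Dieudonn\'e module at a point of $S_{gss}$ away from $S_{ssp}$; the paper instead simply quotes the explicit gss Dieudonn\'e module (the ``braid of length~3'' $\bar{B}(3)$ of B\"ultel--Wedhorn, recorded as Lemma~\ref{Braid}), from which one reads off $V(e_{1})=0$, $V(e_{2})=f_{3}^{(p)}$, settling the rank-$1$ claim directly without any deformation argument.
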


\begin{proof} Everything is a formal consequence of the fact that
$V$ maps $\mathcal{P}$ \emph{onto} $\mathcal{L}^{(p)},$ and the
relation $\det\mathcal{P}\simeq\mathcal{L}.$ Over $\bar{S}_{\mu}$
the proposition was verified in the previous subsection, so it is
enough to prove that $V(\mathcal{P})=\mathcal{L}^{(p)}$ in the fiber
of any geometric point $x\in S_{gss}(k)$ ($k$ algebraically closed).
We use the description of $H_{dR}^{1}(\mathcal{A}_{x}/k)$ given in
Lemma \ref{Braid} below, due to Bültel and Wedhorn. In the notation
of that lemma, $\mathcal{P}_{x}$ is spanned over $k$ by $e_{1}$
and $e_{2}$ and $\mathcal{L}_{x}$ by $f_{3},$ while $V(e_{1})=0,$
$V(e_{2})=f_{3}^{(p)}.$ This concludes the proof. \end{proof}

For the next lemma let $D_{0}=H_{dR}^{1}(\mathcal{A}_{x}/k),$ where
$x\in S_{gss}(k)$ and $k$ is algebraically closed. We identify $D_{0}$
with the reduction modulo $p$ of the (contravariant) Dieudonné module
of $\mathcal{A}_{x}.$ It is therefore equipped with $k$-linear maps
$F:D_{0}^{(p)}\rightarrow D_{0}$ and $V:D_{0}\rightarrow D_{0}^{(p)}$
where $D_{0}^{(p)}=k\otimes_{\phi,k}D_{0}$ as usual.

\begin{lemma} \label{Braid}There exists a basis $e_{1},e_{2},f_{3},f_{1},f_{2},e_{3}$
of $D_{0}$ with the following properties. Denote by $e_{1}^{(p)}=1\otimes e_{1}\in D_{0}^{(p)}$
etc.

(i) $\mathcal{O}_{\mathcal{K}}$ acts on the $e_{i}$ via $\Sigma$
and on the $f_{i}$ via $\bar{\Sigma}$ (hence it acts on the $e_{i}^{(p)}$
via $\bar{\Sigma}$ and on the $f_{i}^{(p)}$ via $\Sigma).$

(ii) The symplectic pairing on $D_{0}$ induced by the principal polarization
$\lambda_{x}$ satisfies 
\begin{equation}
\left\langle e_{i},f_{j}\right\rangle =-\left\langle f_{j},e_{i}\right\rangle =\delta_{ij},\,\,\,\left\langle e_{i},e_{j}\right\rangle =\left\langle f_{i},f_{j}\right\rangle =0.
\end{equation}

(iii) The vectors $e_{1},e_{2},f_{3}$ form a basis for the cotangent
space $\omega_{\mathcal{A}}$ at $x.$ Hence $e_{1}$ and $e_{2}$
span $\mathcal{P}$ and $f_{3}$ spans $\mathcal{L}.$

(iv) $\ker(V)$ is spanned by $e_{1},f_{2},e_{3}.$ Hence $\mathcal{P}_{0}=\mathcal{P}\cap\ker(V)$
is spanned by $e_{1}.$

(v) $Ve_{2}=f_{3}^{(p)},\,Vf_{3}=e_{1}^{(p)},\,Vf_{1}=e_{2}^{(p)}.$

(vi) $Ff_{1}^{(p)}=-e_{3},\,Ff_{2}^{(p)}=-e_{1},Fe_{3}^{(p)}=-f_{2}.$
\end{lemma}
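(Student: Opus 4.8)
The plan is to construct the asserted basis directly from the explicit description of supersingular Dieudonné modules for unitary groups of signature $(2,1)$ at an inert prime, following Bültel--Wedhorn. Recall that for $x \in S_{gss}(k)$, the point $\mathcal{A}_x$ is supersingular but not superspecial, so its Dieudonné module $D$ (a free $W(k)$-module of rank $6$ with $F, V$ and the $\mathcal{O}_{\mathcal{K}}$-action) is isogenous, but not isomorphic, to $\frak{G}^3$. The reduction $D_0 = D/pD = H^1_{dR}(\mathcal{A}_x/k)$ carries the induced semi-linear $F$ and $V$ with $FV = VF = 0$, together with the perfect symplectic pairing coming from $\lambda_x$ for which $\langle Fu, v\rangle = \langle u, Vv\rangle^{(p)}$ (up to the usual twist), and the $\mathcal{O}_{\mathcal{K}}$-action is self-adjoint up to conjugation. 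The type $(2,1)$ condition forces $\dim_k \omega_{\mathcal{A},x}(\Sigma) = 2$ and $\dim_k \omega_{\mathcal{A},x}(\bar\Sigma) = 1$, where $\omega_{\mathcal{A},x} = \ker(F \colon D_0 \to D_0)$... wait, more precisely $\omega_{\mathcal{A},x} = V(D_0^{(p)})^{\perp}$, equivalently the Hodge filtration inside $D_0$.

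First I would set up the $\mathcal{O}_{\mathcal{K}}\otimes k$-decomposition $D_0 = D_0(\Sigma) \oplus D_0(\bar\Sigma)$; since $\mathcal{O}_{\mathcal{K}}/p \simeq \mathbb{F}_{p^2}$ and $k$ is algebraically closed, each summand is $3$-dimensional, and $F, V$ interchange the two (because the nontrivial automorphism of $\mathbb{F}_{p^2}$ is $x\mapsto x^p$). So $V$ restricts to $D_0(\Sigma) \to D_0^{(p)}(\Sigma) = D_0(\bar\Sigma)^{(p)}$ and likewise $F \colon D_0^{(p)}(\Sigma) \to D_0(\Sigma)$; both have $3$-dimensional source and target. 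The key structural input is that on the supersingular (non-superspecial) locus the $a$-number is $1$, i.e. $\dim_k \ker(F) \cap \ker(V) = 1$; equivalently the chain of Frobenius/Verschiebung-images is a single ``braid''. Concretely, one can show $\operatorname{rk} V|_{D_0} = 3$ on each summand the map has rank $2$, and the images assemble into a cyclic pattern. I would then pick $e_2 \in D_0(\Sigma)$ with $V e_2 \neq 0$, set $f_3^{(p)} := V e_2$ (so $f_3$ spans a line in $D_0(\bar\Sigma)$), then $e_1^{(p)} := V f_3$ (a line in $D_0^{(p)}(\Sigma)$... careful with twists — I would actually work with a single basis of $D_0$ and express everything via the semilinear $V$), then continue $V f_1 = e_2^{(p)}$, and use $F = $ ``inverse'' on the complementary part to produce $f_1, f_2, e_3$ with $F f_1^{(p)} = -e_3$, $F f_2^{(p)} = -e_1$, $F e_3^{(p)} = -f_2$. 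The signs and the precise pairing relations (ii) are then pinned down by requiring compatibility of $F$ and $V$ with $\langle\,,\,\rangle$, which forces the symplectic form to take the standard hyperbolic shape on the ordered basis $e_1, e_2, f_3, f_1, f_2, e_3$.

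Having the basis, items (i)--(vi) are then checked as follows. Part (i) is the definition of how the basis was chosen relative to the type decomposition. Part (iii): $\omega_{\mathcal{A},x} = \operatorname{im}(F \colon D_0^{(p)} \to D_0)$ equals $\operatorname{span}(e_1, e_2, f_3)$ by (vi) — one reads off that $F$ hits exactly $e_3 \mapsto$ wait, $F$ hits $e_1, f_2, e_3$; I would instead use $\omega = \ker(\text{projection to } \operatorname{Lie})$, i.e. $\omega_{\mathcal{A},x}$ is the annihilator of $V(D_0)$ under the pairing, and $V(D_0) = \operatorname{span}(f_3^{(p)}, e_1^{(p)}, e_2^{(p)})$, whose perp (using (ii)) is $\operatorname{span}(e_1, e_2, f_3)$. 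This also gives (iv): $\ker V = \operatorname{span}(e_1, f_2, e_3)$ since $V e_1 = V f_2 = V e_3 = 0$ (those are the ``tail'' vectors of the braid) while $e_2, f_3, f_1$ map to a basis of $V(D_0)$; intersecting with $\mathcal{P}_x = \operatorname{span}(e_1, e_2)$ gives $\mathcal{P}_0 = \operatorname{span}(e_1)$. Parts (v) and (vi) are exactly the braid relations built into the construction.

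The main obstacle, and the step I would spend the most care on, is \emph{justifying the braid structure} — that on $S_{gss}$ the Dieudonné module has $a$-number exactly $1$ so that $F$ and $V$ organize the six basis vectors into a single length-$6$ cycle $e_1 \xleftarrow{F} f_2 \xleftarrow{V} f_1 \xleftarrow{?}\cdots$ rather than splitting into smaller cycles (the superspecial case, $a$-number $3$, which is excluded precisely on $S_{gss}$). This is where the input of Bültel--Wedhorn (and the companion results of Vollaard used in Theorem~\ref{Vollaard}) is essential: the local model / Kottwitz--Rapoport description of the $p$-divisible groups arising in the signature $(2,1)$ inert-prime case has exactly one non-superspecial supersingular isomorphism type, and its display is the ``braided'' one written in (v)--(vi). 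I would cite this rather than reprove it, and then the rest is the bookkeeping sketched above: normalize the basis, fix signs via the pairing, and read off (i)--(vi). The compatibility of the pairing with $F, V$ is the one genuinely computational point, but it is routine once the cyclic shape is in hand.
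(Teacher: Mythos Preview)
Your proposal is correct and follows essentially the same route as the paper: the paper's proof is simply the observation that this Dieudonn\'e module is, up to a change of notation, the ``braid of length $3$'' module $\bar{B}(3)$ of B\"ultel--Wedhorn, and that their classification (Proposition~3.6 of \emph{loc.~cit.}) assigns exactly this isomorphism type to the general supersingular stratum. Your write-up unpacks this citation in more detail (type decomposition, $a$-number, cyclic construction of the basis, verification of (i)--(vi)), but the substance is the same; the minor wobble about whether $\omega_{\mathcal{A},x}$ is $\ker F$ or $\operatorname{im} F$ is harmless once you remember that in the contravariant theory $\omega_A^{(p)}=\operatorname{im}(V)=\ker(F)\subset D_0^{(p)}$.
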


\begin{proof} Up to a slight change of notation, this is the unitary
Dieudonné module which Bültel and Wedhorn call a ``braid of length
3'' and denote by $\bar{B}(3),$ cf {[}6{]} (3.2). The classification
in loc. cit. Proposition 3.6 shows that the Dieudonné module of a
$\mu$-ordinary abelian variety is isomorphic to $\bar{B}(2)\oplus\bar{S},$
that of a gss abelian variety is isomorphic to $\bar{B}(3)$ and in
the superspecial case we get $\bar{B}(1)\oplus\bar{S}^{2}.$ \end{proof}

\begin{proposition} Over the whole of $\bar{S}-S_{ssp}$, $V$ maps
$\mathcal{L}$ injectively onto a sub-line-bundle of $\mathcal{P}^{(p)}.$
\end{proposition}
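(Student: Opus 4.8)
The plan is to reduce the assertion to a fibrewise non-vanishing statement for $V$, and then to verify that statement separately on $\bar S_{\mu}$ and on $S_{gss}$; the only genuinely non-formal ingredient will be the Dieudonné-module description of Lemma \ref{Braid}.

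First I would pin down the map in question. The extended Verschiebung $V\colon\omega_{\mathcal A}\to\omega_{\mathcal A}^{(p)}$ is $\mathcal O_{\mathcal K}$-equivariant, and the $(p)$-twist interchanges the two types, so taking $\bar\Sigma$-components gives
\begin{equation}
V\colon\mathcal L=\omega_{\mathcal A}(\bar\Sigma)\longrightarrow\omega_{\mathcal A}^{(p)}(\bar\Sigma)=\bigl(\omega_{\mathcal A}(\Sigma)\bigr)^{(p)}=\mathcal P^{(p)},
\end{equation}
the map already appearing in diagram (\ref{P^p}). Next I would record the elementary fact that a morphism $s\colon\mathcal E\to\mathcal F$ from a line bundle to a locally free rank-$2$ sheaf on a locally noetherian scheme is injective with locally free cokernel --- equivalently, exhibits $\mathcal E$ as a sub-line-bundle of $\mathcal F$ --- if and only if $s\otimes k(x)\neq0$ for every point $x$. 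Indeed, locally $s$ is a column $(a,b)^{t}$; tensoring $0\to\mathcal O\xrightarrow{s}\mathcal O^{2}\to Q\to0$ with $k(x)$ gives $\mathrm{Tor}_{1}(Q,k(x))=\ker(s\otimes k(x))$, which vanishes exactly when $(a(x),b(x))\neq0$; the local criterion of flatness then makes $Q$ locally free, necessarily of rank $1$ (by the same exact sequence), whence the surjection $\mathcal F\to Q$ has sub-bundle kernel $s(\mathcal E)$ onto which $\mathcal E$ maps isomorphically. Since $\bar S-S_{ssp}$ is of finite type over the algebraically closed field $\kappa$, its closed points are geometric points, and the degeneracy locus is closed, so it suffices to check $V\otimes k(x)\neq0$ at geometric points $x$.

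Now I would run over the set-theoretic decomposition $\bar S-S_{ssp}=\bar S_{\mu}\cup S_{gss}$. On $\bar S_{\mu}$ (which contains the cuspidal divisor $C$) the non-vanishing is already contained in the analysis preceding diagram (\ref{P^p}): there $V$ carries $\mathcal L$ isomorphically onto a complement of $\mathcal P_{0}^{(p)}$ in $\mathcal P^{(p)}$, so certainly $V\otimes k(x)\neq0$ for every $x\in\bar S_{\mu}$. At a geometric point $x\in S_{gss}(k)$ I would quote Lemma \ref{Braid}: in the basis of $D_{0}$ furnished there, $\mathcal L_{x}$ is spanned by $f_{3}$ and, after the $\phi$-twist, $\mathcal P^{(p)}_{x}$ is spanned by $e_{1}^{(p)},e_{2}^{(p)}$ in $D_{0}^{(p)}$ (parts (i) and (iii)), while part (v) gives $Vf_{3}=e_{1}^{(p)}\neq0$. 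Hence $V\otimes k(x)\neq0$ at every point of $\bar S-S_{ssp}$, and the criterion of the previous paragraph concludes the proof. One gets a little more: since $e_{1}$ spans $\mathcal P_{0}$ at $x$ by part (iv), over $S_{gss}$ the image $V(\mathcal L)$ is precisely the sub-line-bundle $\mathcal P_{0}^{(p)}$, whereas over $\bar S_{\mu}$ it is transverse to $\mathcal P_{0}^{(p)}$.

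The argument is short and its only external input is Lemma \ref{Braid}. The point to be careful about --- and what I regard as the crux rather than any computation --- is not to conflate ``$V$ is injective on the sheaf $\mathcal L$'' with ``$V(\mathcal L)$ is a sub-bundle of $\mathcal P^{(p)}$'': the image jumps along $\bar S-S_{ssp}$ (from a complement of $\mathcal P_{0}^{(p)}$ over $\bar S_{\mu}$ to $\mathcal P_{0}^{(p)}$ itself over $S_{gss}$), and it is exactly the fibrewise criterion of the second paragraph that guarantees it nevertheless remains a sub-bundle everywhere.
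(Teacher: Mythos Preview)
Your proof is correct and follows essentially the same route as the paper: reduce to the already-established result over $\bar S_{\mu}$ and then invoke Lemma~\ref{Braid} (specifically $Vf_{3}=e_{1}^{(p)}$) at the geometric points of $S_{gss}$. The only difference is that you spell out explicitly the fibrewise criterion (nonvanishing of $V\otimes k(x)$ forces the image to be a sub-bundle) which the paper leaves implicit in the phrase ``it remains to check the assertion fiber-wise.''
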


\begin{proof} Once again, we know it already over $\bar{S}_{\mu},$
and it remains to check the assertion fiber-wise on $S_{gss}.$ We
refer again to Lemma \ref{Braid}, and find that $V(f_{3})=e_{1}^{(p)},$
which proves our claim. \end{proof}

The emerging picture is this: Outside the superspecial points, $V$
maps $\mathcal{L}$ injectively onto a sub-line-bundle of $\mathcal{P}^{(p)}$,
and $V^{(p)}$ maps $\mathcal{P}^{(p)}$ surjectively onto $\mathcal{L}^{(p^{2})}.$
However, the line $V(\mathcal{L})$ coincides with the line $\mathcal{P}_{0}^{(p)}=\ker(V^{(p)})$
only on the general supersingular locus, while on its complement $\bar{S}_{\mu}$
the two lines make up a frame for $\mathcal{P}^{(p)}$ (\ref{P^p}).
One can be a little more precise. The equation 
\begin{equation}
V(\mathcal{L})=\mathcal{P}_{0}^{(p)}
\end{equation}
(i.e. $V^{(p)}\circ V(\mathcal{L})=0$) is the \emph{defining equation}
of $S_{ss}$ in the sense that when expressed in local coordinates
it defines $S_{ss}$ with its \emph{reduced} subscheme structure.
See Proposition \ref{Hasse Invariant} below.

\subsubsection{Non-extendibility of the filtration across $S_{ssp}$}

For a superspecial $x,$ $\mathcal{A}_{x}$ is isomorphic to a product
of three supersingular elliptic curves, so $V$ vanishes on the whole
of $\omega_{\mathcal{A}}$ at $x.$ The analysis of the last paragraph
breaks up. To complete the picture, we shall now prove that there
does not exist any way to extend the filtration (\ref{Fil on P})
across such an $x.$

\begin{proposition} \label{superspecial}It is impossible to extend
the filtration $0\rightarrow\mathcal{P}_{0}\rightarrow\mathcal{P}\rightarrow\mathcal{P}_{\mu}\rightarrow0$
along $S_{ss}$ in a neighborhood of a superspecial point $x$. \end{proposition}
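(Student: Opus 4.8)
The plan is to argue by contradiction, exploiting the explicit structure of the Dieudonn\'e module at a superspecial point together with the local equation $u^{p+1}+v^{p+1}=0$ for $S_{ss}$ near $x$ given in Theorem~\ref{Vollaard}(ii). Suppose, for contradiction, that there is a Zariski (or even formal/\'etale) neighborhood $U$ of $x$ in $S_{ss}$ on which a sub-line-bundle $\mathcal{P}_{0}\subset\mathcal{P}|_{U}$ extends the filtration coming from $S_{gss}$. On the general supersingular locus $S_{gss}$ we already know, by Lemma~\ref{Braid}(iv), that $\mathcal{P}_{0}=\ker(V)\cap\mathcal{P}$, so the putative extension $\mathcal{P}_{0}$ on $U$ must agree with $\ker(V)\cap\mathcal{P}$ away from $x$; but at $x$ itself $V\equiv 0$ on $\omega_{\mathcal{A}}$, so $\ker(V)\cap\mathcal{P}$ jumps in rank and cannot by itself define the line at $x$. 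So the real content is to rule out \emph{any} line that extends it, not just the na\"ive candidate.

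First I would pass to the complete local ring $\widehat{\mathcal{O}}_{S_{ss},x}\simeq\kappa[[u,v]]/(u^{p+1}+v^{p+1})$ and work with the restriction to $S_{ss}$ of the display (or Dieudonn\'e crystal) of $\mathcal{A}$, using Grothendieck--Messing / the theory of B\"ultel--Wedhorn to write down $H^{1}_{dR}(\mathcal{A}/\widehat{\mathcal{O}}_{S_{ss},x})$ with its Hodge filtration $\omega_{\mathcal{A}}$, its $\mathcal{O}_{\mathcal{K}}$-action, and the operators $F,V$ explicitly over this ring. Near $x$ one can choose a basis adapted to the braid $\bar B(3)$ along each branch, and the key is to track how $V$ degenerates: along the two branches $\{u=0\}$ and $\{v=0\}$ (more precisely the $p+1$ branches through $x$, but two suffice) the line $\ker(V)\cap\mathcal{P}$ sweeps out \emph{different} lines in the $2$-dimensional fibre $\mathcal{P}_{x}$ as one approaches $x$. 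Concretely, I expect that the matrix of $V:\mathcal{P}\to\mathcal{L}^{(p)}$ in a suitable frame has the form $(u^{?},v^{?})$ (up to units), so that $\ker(V)$ is the line spanned by a vector like $(v^{a},-u^{b})$; restricting to $\{u=0\}$ gives one line at $x$ and restricting to $\{v=0\}$ gives another. Any sub-line-bundle $\mathcal{P}_{0}$ of $\mathcal{P}|_{U}$ has a well-defined fibre at $x$, and by continuity along each branch this fibre would have to equal \emph{both} of these distinct lines — a contradiction.

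The main obstacle, and the step that needs genuine care, is the explicit computation of $V$ (equivalently of the Hodge filtration inside the Dieudonn\'e crystal) over $\kappa[[u,v]]/(u^{p+1}+v^{p+1})$, i.e. producing the deformation of the braid $\bar B(3)$ transverse to $S_{ss}$ restricted back to $S_{ss}$, and verifying that the two limiting positions of $\ker(V)$ along the two branches are indeed distinct. This is where one uses that $x$ is superspecial (so $V=0$ on all of $\omega_{\mathcal{A},x}$, forcing the vanishing orders of the entries of $V$ to be positive) and that the local geometry of $S_{ss}$ at $x$ is the node-like singularity $u^{p+1}+v^{p+1}=0$ with $p+1$ branches; the asymmetry between the branches is exactly what obstructs a consistent limit. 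One should also remark that it is harmless to enlarge $N$, since by Theorem~\ref{Vollaard}(iv) the normalization of each component is still $\mathcal{C}_{p}$ and the local structure at a point lying over $x$ is unchanged; passing to the normalization of $S_{ss}$ at $x$, where the branches are separated, makes the "two incompatible limits" argument cleanest. Finally, I would note that the same computation also recovers, as a by-product, the local equation $V^{(p)}\circ V=0$ cutting out $S_{ss}$ (Proposition~\ref{Hasse Invariant}), tying this non-extendibility result to the behaviour of the Hasse invariant $h_{\bar\Sigma}$.
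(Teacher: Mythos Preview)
Your strategy---compute the deformation of the Dieudonn\'e module in a formal neighborhood of $x$, restrict to each of the $p+1$ branches of $S_{ss}$, and show that the limiting lines $\mathcal{P}_{0}\subset\mathcal{P}_{x}$ are pairwise distinct---is exactly the paper's. Two points need correction before it goes through.

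First, a slip: the branches of $u^{p+1}+v^{p+1}=0$ are the lines $u=\zeta v$ with $\zeta^{p+1}=-1$, not $\{u=0\}$ or $\{v=0\}$ (each of which meets $\widehat S_{ss}$ only at the origin). Once you have the correct matrix $V|_{\mathcal{P}}=(u,v)$ in a suitable frame, the kernel along $u=\zeta v$ limits to a line depending genuinely on $\zeta$, and the $p+1$ limits are distinct---this is the contradiction.

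Second, and this is the substantive technical point you underplay: Grothendieck--Messing crystalline deformation theory is \emph{not} adequate at a superspecial point. The singularity $u^{p+1}+v^{p+1}=0$ forces you to work in an Artinian (or formal) neighborhood whose maximal ideal is not nilpotent of order $\le p$, so there is no divided-power structure to invoke. The paper therefore passes to the covariant side and uses Zink's theory of displays to write down the universal Hasse--Witt matrix of $F$ on $Lie(\mathcal{A})$ over $\kappa[[u,v]]$; the computation of $F^{2}$ then simultaneously recovers the local equation $u^{p+1}+v^{p+1}=0$ and, upon restriction to each branch $u=\zeta v$, exhibits the $p+1$ distinct limiting lines. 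You mention displays in passing, but your proposal leans on Grothendieck--Messing; you should make the switch explicit and explain why it is forced.
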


\begin{proof} At any superspecial point there are $p+1$ branches
of $S_{ss}$ meeting transversally. We shall prove the proposition
by showing that along any one of these branches (labelled by $\zeta,$
a $p+1$-st root of $-1$) $\mathcal{P}_{0}$ approaches a line $\mathcal{P}_{x}[\zeta]\subset\mathcal{P}_{x},$
but these $p+1$ lines are distinct. In other words, on the normalization
of $S_{ss}$ we can extend the filtration uniquely, but the extension
does not descend to $S_{ss}.$

Before we go into the proof a word of explanation is needed. In order
to study the deformation of the action of $V$ on $\omega_{\mathcal{A}}$
near a \emph{general }supersingular point $x\in S_{gss}$, the first
infinitesimal neighborhood of $x$ suffices, and one ends up using
Grothendieck's crystalline deformation theory (see \ref{PD deformation}
below). At a point $x\in S_{ssp},$ in contrast, we need to work in
the full formal neighborhood of $x$ in $S$, or at least in an Artinian
neighborhood which no longer admits a divided power structure. The
reason is that the singularity of $S_{ss}$ at $x$ is formally of
the type $Spec(\kappa[[u,v]]/(u^{p+1}+v^{p+1}))$. Crystalline deformation
theory is inadequate, and we need to use Zink's ``displays''. As
the theory of displays is covariant, we start with the covariant Cartier
module of $A=\mathcal{A}_{x}$ rather than the contravariant Dieudonné
module, and look for its universal deformation.

Let us review the (confusing) functoriality of these two modules.
For the moment, let $A$ be any abelian variety over $\kappa.$ Here
$\kappa$ can be any perfect field of characteristic $p.$ If $D$
is the (contravariant) Dieudonné module of $A$ and $M$ is its (covariant)
Cartier module, then $D/pD=H_{dR}^{1}(A)$ and $M/pM=H_{dR}^{1}(A^{t})$
are set in duality. The dual of 
\begin{equation}
V:D/pD\rightarrow(D/pD)^{(p)}
\end{equation}
($V=Ver_{A}^{*}$, $Ver_{A}$ being the Verschiebung isogeny from
$A^{(p)}$ to $A$) is the map 
\begin{equation}
F:(M/pM)^{(p)}\rightarrow M/pM
\end{equation}
($F=Frob_{A^{t}}^{*},$ $Frob_{A^{t}}$ being the Frobenius isogeny
from $A^{t}$ to $A^{t(p)}$). As usual, since $\kappa$ is perfect,
we may view $V$ as a $\phi^{-1}$-linear endomorphism of $D/pD$,
and $F$ as a $\phi$-linear endomorphism of $M/pM.$ Replacing $A$
by $A^{t}$ we then also have semi-linear endomorphisms $F$ of $D/pD$
and $V$ of $M/pM.$ The Hodge filtration $\omega_{A}\subset H_{dR}^{1}(A)$
is $(D/pD)[F]$. Its dual is the quotient $Lie(A)=H^{1}(A^{t},\mathcal{O})$
of $H_{dR}^{1}(A^{t}),$ identified with $M/VM.$ Compare {[}29{]},
Corollary 5.11.

This reminder tells us that when we pass from the contravariant theory
to the covariant one, instead of looking for the deformation of $V$
on $\omega_{A}$ we should look for the deformation of $F$ on $Lie(A)=M/VM.$
At a superspecial point $F$ annihilates $Lie(A),$ but at nearby
points in $S$ it need not annihilate it anymore.

Now let $x\in S_{ssp}$ and $A=\mathcal{A}_{x}.$ The Cartier module
(modulo $p$) $M/pM$ of $A$ admits a symplectic basis $f_{3},e_{1},e_{2},e_{3},f_{1},f_{2}$
where $\mathcal{O}_{\mathcal{K}}$ acts on the $e_{i}$ via $\Sigma$
and on the $f_{i}$ via $\bar{\Sigma}$, where the polarization pairing
is $\left\langle e_{i},f_{j}\right\rangle =-\left\langle f_{j},e_{i}\right\rangle =\delta_{ij}$
and $\left\langle e_{i},e_{j}\right\rangle =\left\langle f_{i},f_{j}\right\rangle =0,$
and where $f_{3},e_{1},e_{2}$ project to a basis of $Lie(A)=M/VM.$
With an appropriate choice of the basis, the Frobenius $F$ on $M/pM$
is the $\phi$-linear map whose matrix with respect to the basis $f_{3},e_{1},e_{2},e_{3},f_{1},f_{2}$
is 
\begin{equation}
\left(\begin{array}{llllll}
0 &  &  & 0\\
 & 0 &  &  & 0\\
 &  & 0 &  &  & 0\\
1 &  &  & 0\\
 & 1 &  &  & 0\\
 &  & 1 &  &  & 0
\end{array}\right).
\end{equation}
All this can be deduced from {[}6{]}, 3.2.

To construct the universal display we follow the method of {[}15{]}.
See also {[}1{]}. With local coordinates $u$ and $v$ we write $\widehat{S}=Spf\kappa[[u,v]]$
for the formal completion of $S$ at $x.$ We study the deformation
of $F$ to 
\begin{equation}
F:H_{dR}^{1}(\mathcal{A}^{t}/\widehat{S})^{(p)}\rightarrow H_{dR}^{1}(\mathcal{A}^{t}/\widehat{S}).
\end{equation}
If $A/R$ is any abelian scheme over an $\Bbb{F}_{p}$-algebra $R$
and $a$ any endomorphism of $A/R,$ then $Frob:A\rightarrow A^{(p)}$
satisfies 
\begin{equation}
Frob\circ a=a^{(p)}\circ Frob
\end{equation}
where $a^{(p)}=1\otimes a$ is the base-change of $a$ to $\mathcal{A}^{(p)}.$
Thus $F$ commutes with the $\mathcal{O}_{\mathcal{K}}$-structure
on de Rham cohomology. Note however that $H_{dR}^{1}(\mathcal{A}^{t}/\widehat{S})^{(p)}(\Sigma)=H_{dR}^{1}(\mathcal{A}^{t}/\widehat{S})(\overline{\Sigma})^{(p)}$
and vice versa.

We use a basis $f_{3},e_{1},\dots,f_{2}$ of $H_{dR}^{1}(\mathcal{A}^{t}/\widehat{S})$
satisfying the same assumptions as above with respect to the $\mathcal{O}_{\mathcal{K}}$-type
and the polarization pairing. A bit of elementary algebra, which we
skip, shows that one can modify the local coordinates $u$ and $v,$
\emph{and} the basis of $H_{dR}^{1}(\mathcal{A}^{t}/\widehat{S})$
(keeping our assumptions on the $\mathcal{O}_{\mathcal{K}}$-type
and the polarization), so that the universal Frobenius is given by
the (Hasse-Witt) matrix 
\begin{equation}
F=\left(\begin{array}{llllll}
0 & u & v & 0\\
u & 0 &  &  & 0\\
v &  & 0 &  &  & 0\\
1 &  &  & 0\\
 & 1 &  &  & 0\\
 &  & 1 &  &  & 0
\end{array}\right).
\end{equation}
This means, as usual, that 
\begin{equation}
F(f_{3}^{(p)})=ue_{1}+ve_{2}+e_{3},
\end{equation}
etc. Since the first three vectors ($f_{3},e_{1},e_{2}$) project
onto a basis of $Lie(\mathcal{A})$, the matrix of $F:Lie(\mathcal{A})^{(p)}\rightarrow Lie(\mathcal{A}\Bbb{)}$
is the $3\times3$ upper left block, and the matrix of $F^{2}(=F\circ F^{(p)})$
is (note the semilinearity) 
\begin{equation}
\left(\begin{array}{lll}
u^{p+1}+v^{p+1}\\
 & u^{p+1} & uv^{p}\\
 & vu^{p} & v^{p+1}
\end{array}\right).
\end{equation}
Thus on $Lie(\mathcal{A})(\bar{\Sigma})^{(p^{2})}=\mathcal{L}^{(p^{2})\vee}$
the action of $F^{2}$ is given by multiplication by $u^{p+1}+v^{p+1}$.
As the supersingular locus is the locus where the action of $F$ on
the Lie algebra is nilpotent, we recover the fact that the local (formal)
equation of $S_{ss}$ at $x$ is 
\begin{equation}
u^{p+1}+v^{p+1}=0.\label{ssp equation}
\end{equation}
Note that this equation guarantees also that the lower $2\times2$
block, representing the action of $F^{2}$ on the $\Sigma$-part of
the Lie algebras is (semi-linearly) nilpotent, i.e. 
\begin{equation}
\left(\begin{array}{ll}
u^{p+1} & uv^{p}\\
vu^{p} & v^{p+1}
\end{array}\right)\left(\begin{array}{ll}
u^{p^{2}(p+1)} & u^{p^{2}}v^{p^{3}}\\
v^{p^{2}}u^{p^{3}} & v^{p^{2}(p+1)}
\end{array}\right)=0.
\end{equation}
We write $\widehat{S}_{ss}=Spf(\kappa[[u,v]]/(u^{p+1}+v^{p+1}))$
for the formal completion of $S_{ss}$ at $x.$ Letting $\zeta$ run
over the $p+1$ roots of $-1$ we recover the $p+1$ formal branches
through $x$ as the ``lines'' 
\begin{equation}
u=\zeta v.
\end{equation}
We write $\widehat{S}_{ss}[\zeta]=Spf(\kappa[[u,v]]/(u-\zeta v))$
for this branch. When we restrict (pull back) the vector bundle $Lie(\mathcal{A})$
to $\widehat{S}_{ss}[\zeta]$, $\ker(F:Lie(\mathcal{A})^{(p)}\rightarrow Lie(\mathcal{A}))$
(the dual of $\omega_{\mathcal{A}}^{(p)}/V(\omega_{\mathcal{A}}),$
which outside $x$ is just $\mathcal{P}_{\mu}^{(p)}$) becomes 
\begin{equation}
\ker\left(\begin{array}{lll}
0 & \zeta v & v\\
\zeta v & 0 & 0\\
v & 0 & 0
\end{array}\right).
\end{equation}
When $v\neq0$ (i.e. outside the point $x$) this is the line (note
again the semi-linearity, and the relation $\zeta^{-p}=-\zeta$) 
\begin{equation}
\kappa\left(\begin{array}{l}
0\\
1\\
\zeta^{-1}
\end{array}\right).
\end{equation}
As these lines are distinct, the filtration of $\mathcal{P}$ can
not be extended across $x.$ \end{proof}

\subsection{The Hasse invariant $h_{\bar{\Sigma}}$}

\subsubsection{Definition of $h_{\bar{\Sigma}}$}

The construction and main properties of the Hasse invariant that we
are about to describe have been given (for any unitary Shimura variety)
by Goldring and Nicole in {[}12{]} (see also {[}22{]}), but in our
case they can be also obtained easily from the discussion of the previous
subsection. Let $R$ be any $\kappa$-algebra, and $(A,\lambda,\iota,\alpha)\in\mathcal{M}(R).$

\begin{definition} The Hasse invariant of $A$
\begin{equation}
h_{\bar{\Sigma}}(A)\in Hom(\omega_{A/R}(\bar{\Sigma}),\omega_{A/R}(\bar{\Sigma})^{(p^{2})})
\end{equation}
is the map $h_{\bar{\Sigma}}=V^{(p)}\circ V.$ \end{definition}

Applying the same definition to the universal semi-abelian scheme
$\mathcal{A}/\bar{S}$ we get (note $\mathcal{L}^{(p^{2})}\simeq\mathcal{L}^{p^{2}}$)
\begin{equation}
h_{\bar{\Sigma}}\in Hom_{\bar{S}}(\mathcal{L},\mathcal{L}^{p^{2}})=H^{0}(\bar{S},\mathcal{L}^{p^{2}-1})=M_{p^{2}-1}(N,\kappa).
\end{equation}
Thus the Hasse invariant is a modular form of weight $p^{2}-1$ defined
over $\kappa.$

\begin{theorem} \label{Hasse Invariant}The Hasse invariant is invertible
on $\bar{S}_{\mu}$ and vanishes on $S_{ss}$ to order one. More precisely,
when we endow $S_{ss}$ with its induced reduced subscheme structure,
it becomes the Cartier divisor $div(h_{\bar{\Sigma}}).$ \end{theorem}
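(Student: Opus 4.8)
The plan is to prove the two assertions separately: invertibility of $h_{\bar{\Sigma}}$ on $\bar{S}_{\mu}$, and that its vanishing locus, with reduced structure, equals $S_{ss}$ and is everywhere of multiplicity one. For the first assertion, recall from the analysis over $\bar{S}_{\mu}$ that $V:\mathcal{P}\to\mathcal{L}^{(p)}$ is surjective with kernel $\mathcal{P}_0$, and that $V:\mathcal{L}\to\mathcal{P}^{(p)}$ is injective with image a direct summand complementary to $\mathcal{P}_0^{(p)}$ (the splitting $\mathcal{P}^{(p)}=\mathcal{P}_0^{(p)}\oplus V(\mathcal{L})$ in diagram (\ref{P^p})). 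Applying Frobenius to the surjection $V:\mathcal{P}\to\mathcal{L}^{(p)}$ shows $V^{(p)}:\mathcal{P}^{(p)}\to\mathcal{L}^{(p^2)}$ is surjective with kernel $\mathcal{P}_0^{(p)}$; composing with the injection $V:\mathcal{L}\hookrightarrow\mathcal{P}^{(p)}$, whose image meets $\mathcal{P}_0^{(p)}$ trivially, gives an injection $\mathcal{L}\hookrightarrow\mathcal{L}^{(p^2)}$ between line bundles, hence an isomorphism. So $h_{\bar\Sigma}=V^{(p)}\circ V$ is nowhere vanishing on $\bar{S}_{\mu}$.

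For the vanishing statement, since $S_{ss}$ is disjoint from the cuspidal divisor (Theorem \ref{Vollaard}(i)), the divisor $\operatorname{div}(h_{\bar\Sigma})$ is supported in $S$, and we have just seen its support is contained in $S_{ss}=S_{gss}\cup S_{ssp}$. It remains to compute the order of vanishing along each irreducible component of $S_{gss}$, which suffices since $S_{ssp}$ has codimension $2$ and a reduced divisor is determined by its multiplicities along codimension-one components. Fix a geometric point $x\in S_{gss}(k)$ not superspecial, lying on a component with local parameter vanishing along $S_{ss}$; I will work in the first-order (PD) deformation at $x$, as the text signals in the remark preceding \lemref{Braid} and in the reference to \ref{PD deformation}. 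Using Grothendieck–Messing crystalline deformation theory together with the explicit Dieudonné module $\bar B(3)$ of \lemref{Braid}, one writes down the Hasse–Witt matrix of $V$ on $\omega_{\mathcal{A}}$ over the first infinitesimal neighborhood of $x$; by the same elementary normalization performed in the superspecial case (but now with a $3\times 3$ braid rather than the superspecial block), the action of $V^{(p)}\circ V$ on $\mathcal{L}$ near $x$ is multiplication by a local coordinate $t$ cutting out $S_{ss}$ transversally, i.e. to order exactly one.

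The main obstacle is the last step: producing the explicit local deformation of $V$ at a general supersingular point and reading off that $h_{\bar\Sigma}$ is a uniformizer for $S_{ss}$ there. This parallels, but is simpler than, the superspecial computation already carried out in the proof of Proposition \ref{superspecial}: from the braid $\bar B(3)$ with $Ve_2=f_3^{(p)}$, $Vf_3=e_1^{(p)}$, $Vf_1=e_2^{(p)}$, the lift of $V$ over $k[\epsilon]$ (or the relevant Artinian base) perturbs the single entry that was zero at $x$, and the composition $V^{(p)}\circ V$ acting on the line $\mathcal{L}=k f_3$ lands back in $\mathcal{L}^{(p^2)}$ multiplied by that perturbation parameter. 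One then checks this parameter is a local equation for $S_{gss}$ — equivalently, that $S_{ss}$ is smooth (hence reduced) at $x$ of the expected dimension, which is part of Theorem \ref{Vollaard}. Since $\operatorname{div}(h_{\bar\Sigma})$ is effective, supported on $S_{ss}$, and has multiplicity one along every component of $S_{gss}$, it coincides with $S_{ss}$ with its reduced structure, proving the theorem.
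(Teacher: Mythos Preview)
Your approach is correct and essentially the same as the paper's: invertibility on $\bar{S}_{\mu}$ via the direct-sum splitting $\mathcal{P}^{(p)}=\mathcal{P}_0^{(p)}\oplus V(\mathcal{L})$, and order-one vanishing via an explicit crystalline deformation computation at a point $x\in S_{gss}$ using the braid module $\bar{B}(3)$. One clarification worth making when you carry out the details: over the first infinitesimal neighborhood it is the \emph{Hodge filtration} that deforms (so the generator of $\mathcal{L}$ becomes $f_3-uf_1-vf_2$) while $V$ on horizontal sections keeps the formulae of Lemma~\ref{Braid}; the paper then computes $V^{(p)}\circ V(f_3-uf_1-vf_2)=-u\,f_3^{(p^2)}$ and independently verifies that $u=0$ is the local equation of $S_{gss}$, rather than appealing to the smoothness statement in Theorem~\ref{Vollaard}.
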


\begin{proof} The Hasse invariant vanishes precisely where $V(\mathcal{L})$
is contained in $\ker(V^{(p)}:\mathcal{P}^{(p)}\rightarrow\mathcal{L}^{(p^{2})}).$
We have already seen that over $\bar{S}_{\mu}$ the latter is the
line bundle $\mathcal{P}_{0}^{(p)}$ and that $V$ sends $\mathcal{L}$
isomorphically onto a direct complement of $\mathcal{P}_{0}^{(p)}$,
\emph{cf }(\ref{P^p}). Thus the Hasse invariant does not vanish on
$\bar{S}_{\mu}.$ To prove that $h_{\bar{\Sigma}}$ vanishes on $S_{ss}$
to order 1 we must study the Dieudonné module at an infinitesimal
neighborhood of a point $x\in S_{gss}$ and compute $V^{(p)}\circ V$
using local coordinates there. In Lemma \ref{Braid} we described
the (contravariant) Dieudonné module at $x.$ In subsection \ref{PD deformation}
below we describe its infinitesimal deformation. Using the local coordinates
$u$ and $v$ introduced there, $f_{3}-uf_{1}-vf_{2}$ becomes a basis
for $\mathcal{L}$ over the first infinitesimal neighborhood of $x.$
We then compute 
\begin{eqnarray}
V(f_{3}-uf_{1}-vf_{2}) & = & e_{1}^{(p)}-ue_{2}^{(p)}\notag\\
V^{(p)}(e_{1}^{(p)}-ue_{2}^{(p)}) & = & -uf_{3}^{(p^{2})}=-u\cdot(f_{3}-uf_{1}-vf_{2})^{(p^{2})}.
\end{eqnarray}
It follows that after $\mathcal{L}$ has been locally trivialized,
the equation $h_{\bar{\Sigma}}=0$ becomes $u=0,$ which is the local
equation for $S_{gss}.$ \end{proof}

\subsubsection{Infinitesimal deformations\label{PD deformation}}

Let $x\in S_{gss}.$ We shall study the infinitesimal deformation
of the Dieudonné module of $\mathcal{A}$ at $x$. Let $\mathcal{O}_{S,x}$
be the local ring of $S$ at $x,$ $\frak{m}$ its maximal ideal,
and $R=\mathcal{O}_{S,x}/\frak{m}^{2}.$ Thus $Spec(R)$ is the first
infinitesimal neighborhood of $x$ in $S,$ and $R\simeq\kappa[u,v]/(u^{2},uv,v^{2}),$
although the choice of the local parameters $u$ and $v$ lies still
at our disposal. The module of Kähler differentials of $R$ is the
3-dimensional vector space over $\kappa$
\begin{equation}
\Omega_{R}^{1}=\Omega_{R/\kappa}^{1}=Rdu+Rdv/\left\langle udu,vdv,udv+vdu\right\rangle .
\end{equation}
Let $A$ be the restriction of $\mathcal{A}$ to $Spec(R)$ and $A_{0}=\mathcal{A}_{x}$
its special fiber. Let $D=H_{dR}^{1}(A/R)$ (a free $R$-module of
rank $6$) and 
\[
D_{0}=H_{dR}^{1}(A_{0}/\kappa)=\kappa\otimes_{R}D,
\]
identified with the Dieudonné module of $A_{0}$ modulo $p$ (see
Lemma \ref{Braid})$.$

The Gauss-Manin connection {[}20{]} is in general defined only for
abelian schemes over a base which is smooth over a field. In our case,
despite the fact that $R$ is not smooth over $\kappa,$ the Gauss-Manin
connection of $\mathcal{A}/S$ yields, by base change, also a connection
\begin{equation}
\nabla:D\rightarrow\Omega_{R}^{1}\otimes_{R}D.
\end{equation}
Caution must be exercised, though, because $\Omega_{R}^{1}\neq\Omega_{S}^{1}\otimes_{\mathcal{O}_{S}}R.$
The Kodaira-Spencer map over $R,$ for example, is not an isomorphism.

We claim that $D$ has a basis of horizontal sections for $\nabla$.
This follows from the crystalline nature of $H^{1},$ but for completeness
we give the easy argument. If $x\in D$ and 
\begin{equation}
\nabla x=du\otimes x_{1}+dv\otimes x_{2}
\end{equation}
($x_{i}\in D$) then $\tilde{x}=x-ux_{1}-vx_{2}$ satisfies 
\begin{equation}
\nabla\tilde{x}=-u\nabla x_{1}-v\nabla x_{2}.
\end{equation}
But if $\nabla x_{1}=du\otimes x_{11}+dv\otimes x_{12}$ and $\nabla x_{2}=du\otimes x_{21}+dv\otimes x_{22}$
then 
\begin{equation}
0=\nabla^{2}x=du\wedge dv\otimes(x_{21}-x_{12})
\end{equation}
hence $x_{21}-x_{12}\in\frak{m}D.$ As $\frak{m}^{2}=0$, $udu=vdv=0$
and $udv=-vdu,$ it follows that 
\begin{eqnarray}
\nabla\tilde{x} & = & -udv\otimes x_{12}-vdu\otimes x_{21}\notag\\
 & = & du\otimes v(x_{12}-x_{21})=0.
\end{eqnarray}
This means that $\tilde{x}$ is a horizontal section having the same
specialization as $x$ in the special fiber, so the horizontal sections
span $D$ over $R$ by Nakayama's lemma.

Let $e_{1},e_{2},f_{3},f_{1},f_{2},e_{3}$ be any six horizontal sections
over $R$, specializing to a basis of $D_{0}$. Identify $D_{0}$
with their $\kappa$-span in $D$. As we have just seen, 
\begin{equation}
R\otimes_{\kappa}D_{0}\rightarrow D
\end{equation}
is surjective, hence, by a dimension count, an isomorphism, and $\nabla=d\otimes1$
on the left hand side. Since $R^{d=0}=\kappa,$ it follows that $D_{0}=D^{\nabla},$
i.e. there are no more horizontal sections besides $D_{0}.$ Thus
every $x_{0}\in$ $H_{dR}^{1}(A_{0}/\kappa)$ has a \emph{unique}
extension to a horizontal section $x\in H_{dR}^{1}(A/R).$

The pairing $\left\langle ,\right\rangle $ on $D$ induced from the
polarization is horizontal for $\nabla,$ i.e. 
\begin{equation}
d\left\langle x,y\right\rangle =\left\langle \nabla x,y\right\rangle +\left\langle x,\nabla y\right\rangle .
\end{equation}

We conclude that we may regard the basis of $D_{0}$ given in Lemma
\ref{Braid} also as a basis of horizontal sections spanning $D$
over $R,$ and that the action of $\iota(\mathcal{O}_{\mathcal{K}})$
and the pairing $\left\langle ,\right\rangle $ are given by the formulae
of the lemma also over $R$.

As $p\neq2,$ $R$ has a canonical divided power structure, and Grothendieck's
crystalline deformation theory {[}16{]} tells us that $A/R$ is completely
determined by $A_{0}$ and by the Hodge filtration $\omega_{A/R}\subset D=R\otimes_{\kappa}D_{0}.$
Since $A$ \emph{is} the universal infinitesimal deformation of $A_{0}$,
we may choose the coordinates $u$ and $v$ so that 
\begin{equation}
\mathcal{P}=Span_{R}\{e_{1}+ue_{3},e_{2}+ve_{3}\}.
\end{equation}
The fact that $\omega_{A/R}$ is isotropic for $\left\langle ,\right\rangle $
implies then that 
\begin{equation}
\mathcal{L}=Span_{R}\{f_{3}-uf_{1}-vf_{2}\}.
\end{equation}

Consider the abelian scheme $A^{(p)}.$ This is \emph{not} the universal
deformation of $A_{0}^{(p)}$ over $R.$ In fact, the $p$-power map
$\phi:R\rightarrow R$ factors as 
\begin{equation}
R\overset{\pi}{\rightarrow}\kappa\overset{\phi}{\rightarrow}\kappa\overset{i}{\rightarrow}R,
\end{equation}
and therefore $A^{(p)},$ \emph{unlike} $A,$ is constant: $A^{(p)}=A_{0}^{(p)}\times_{\kappa}R$
(intuitively, $\Phi$ is contracting on the base, so the pull-back
of $\mathcal{A}$ becomes constant on Artinian neighborhoods which
are contracted to a point). As with $D,$ $D^{(p)}=R\otimes_{\kappa}D_{0}^{(p)},$
$\nabla=d\otimes1,$ but this time the basis of horizontal sections
can be obtained also from the trivalization of $A^{(p)},$ and $\omega_{A^{(p)}/R}=Span_{R}\{e_{1}^{(p)},e_{2}^{(p)},f_{3}^{(p)}\}.$

The isogenies $Frob$ and $Ver,$ like any isogeny, take horizontal
sections with respect to the Gauss-Manin connection to horizontal
sections, e.g. if $x\in D$ and $\nabla x=0$ then $Vx\in D^{(p)}$
satisfies $\nabla(Vx)=0.$ Since $V$ and $F$ preserve horizontality,
$e_{1},f_{2},e_{3}$ span $\ker(V)$ over $R$ in $D,$ and the relations
in \emph{(v)} and \emph{(vi)} of Lemma \ref{Braid} continue to hold.
Indeed, the matrix of $V$ in the basis at $x$ prescribed by that
lemma continues to represent $V$ over $Spec(R)$ by ``horizontal
continuation''. The matrix of $F$ is then derived from the relation
\begin{equation}
\left\langle Fx,y\right\rangle =\left\langle x,Vy\right\rangle ^{(p)}
\end{equation}
($x\in D^{(p)},y\in D$).

The Hodge filtration nevertheless varies, so we conclude that 
\begin{equation}
\mathcal{P}_{0}=\mathcal{P}\cap\ker(V)=Span_{R}\{e_{1}+ue_{3}\}.
\end{equation}
The condition $V(\mathcal{L})=\mathcal{P}_{0}^{(p)},$ which is the
``equation'' of the closed subscheme $S_{gss}\cap Spec(R)$ means
\begin{equation}
V(f_{3}-uf_{1}-vf_{2})=e_{1}^{(p)}-ue_{2}^{(p)}\in R\cdot e_{1}^{(p)}
\end{equation}
and this holds if and only if $u=0.$ We have proved the following
lemma, which completes the proof of Theorem \ref{Hasse Invariant}.

\begin{lemma} Let $x\in S_{gss}$ and the coordinates $u,v$ be as
above. Then the closed subscheme $S_{gss}\cap Spec(R)$ is given by
the equation $u=0.$ \end{lemma}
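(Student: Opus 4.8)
The plan is to reduce the statement to the explicit computation already set up over $R=\mathcal{O}_{S,x}/\frak{m}^{2}$. Recall that near $x$, hence away from $S_{ssp}$, the supersingular locus is the closed subscheme on which $V(\mathcal{L})$ is forced into $\mathcal{P}_{0}^{(p)}=\ker(V^{(p)}\colon\mathcal{P}^{(p)}\to\mathcal{L}^{(p^{2})})$, equivalently the zero scheme of $h_{\bar{\Sigma}}=V^{(p)}\circ V\colon\mathcal{L}\to\mathcal{L}^{(p^{2})}$; so $S_{gss}\cap Spec(R)$ is cut out in $Spec(R)$ by the vanishing of the induced map $\mathcal{L}\to\mathcal{P}^{(p)}/\mathcal{P}_{0}^{(p)}$. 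First I would collect the three line bundles involved, all viewed inside the trivialized de Rham module $D\simeq R\otimes_{\kappa}D_{0}$: from \secref{PD deformation} one has $\mathcal{L}=Span_{R}\{f_{3}-uf_{1}-vf_{2}\}$ and $\mathcal{P}_{0}=Span_{R}\{e_{1}+ue_{3}\}$; and since the $p$-power map $R\to R$ factors through $\kappa$ (so $u,v\mapsto0$), the abelian scheme $\mathcal{A}^{(p)}$ is constant over $R$ and $\mathcal{P}_{0}^{(p)}=\Phi^{*}\mathcal{P}_{0}$ is the \emph{constant} line $Span_{R}\{e_{1}^{(p)}\}$, the Frobenius pullback of the fibre $\mathcal{P}_{0,x}=\kappa e_{1}$.

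Next comes the computation itself. Because $V$ is $\mathcal{O}_{S}$-linear and carries horizontal sections to horizontal sections, in the braid basis of \lemref{Braid} it acts over $R$ by the same formulae as at $x$; in particular $Vf_{3}=e_{1}^{(p)}$, $Vf_{1}=e_{2}^{(p)}$ and $f_{2}\in\ker(V)$, whence
\begin{equation}
V(f_{3}-uf_{1}-vf_{2})=Vf_{3}-u\cdot Vf_{1}-v\cdot Vf_{2}=e_{1}^{(p)}-ue_{2}^{(p)}.
\end{equation}
Since $e_{1}^{(p)},e_{2}^{(p)}$ form part of an $R$-basis of $D^{(p)}$, this element lies in $\mathcal{P}_{0}^{(p)}=R\cdot e_{1}^{(p)}$ if and only if $ue_{2}^{(p)}\in R\cdot e_{1}^{(p)}$, i.e.\ if and only if $u=0$ in $R$. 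Equivalently, applying $V^{(p)}$ (which kills $e_{1}^{(p)}$ and sends $e_{2}^{(p)}$ to $f_{3}^{(p^{2})}$, which as $\Phi^{2}$ annihilates $\frak{m}$ equals $(f_{3}-uf_{1}-vf_{2})^{(p^{2})}$) shows that, once $\mathcal{L}$ is trivialized by $f_{3}-uf_{1}-vf_{2}$, the Hasse invariant $h_{\bar{\Sigma}}$ is multiplication by $-u$. In either formulation the ideal of $R$ defining $S_{gss}\cap Spec(R)$ is $(u)$, which is the claim.

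I do not expect a genuine obstacle here: the real work was done in \secref{PD deformation}, and what remains is bookkeeping, with two points worth watching. First, the coordinate $u$ must pass through $V$ \emph{unchanged} rather than being raised to the $p$-th power; this is because $V\colon\mathcal{L}\to\mathcal{P}^{(p)}$ is a genuinely $\mathcal{O}_{S}$-linear map of sheaves, the Frobenius twist having been absorbed into the target, and it sends the \emph{horizontal} section $f_{1}$ to the horizontal (hence constant) section $e_{2}^{(p)}$. Second, one must use that $\mathcal{P}_{0}^{(p)}$, a Frobenius pullback along a base map that annihilates $\frak{m}$, is the constant line $R\cdot e_{1}^{(p)}$ and not $R\cdot(e_{1}+ue_{3})^{(p)}$; it is exactly this that makes the equation come out to $u=0$ on the nose. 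Finally, it is worth recording that $R/(u)\simeq\kappa[v]/(v^{2})$ is the first infinitesimal neighbourhood of $x$ inside a smooth curve, consistent with $S_{gss}$ being nonsingular at the general supersingular point $x$ (\thmref{Vollaard}); this is the infinitesimal input which, globalized, shows $div(h_{\bar{\Sigma}})$ is reduced along $S_{gss}$ and thus completes the proof of \thmref{Hasse Invariant}.
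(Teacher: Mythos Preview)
Your argument is correct and follows essentially the same route as the paper: you use the horizontal basis over $R$ from \secref{PD deformation}, compute $V(f_{3}-uf_{1}-vf_{2})=e_{1}^{(p)}-ue_{2}^{(p)}$, and observe that membership in $\mathcal{P}_{0}^{(p)}=R\cdot e_{1}^{(p)}$ forces $u=0$. The additional remarks you make (that $V$ is $\mathcal{O}_{S}$-linear so $u$ passes through unchanged, that $\mathcal{P}_{0}^{(p)}$ is the constant line because $\Phi$ annihilates $\frak{m}$, and the consistency check with smoothness of $S_{gss}$) are all correct and helpful clarifications of points the paper leaves implicit.
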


\subsection{A secondary Hasse invariant on the supersingular locus}

In his forthcoming Ph.D. thesis {[}5{]}, Boxer develops a general
theory of secondary Hasse invariants defined on lower strata of Shimura
varieties of Hodge type. See also {[}21{]}. In this section we provide
an independent approach, in the case of Picard modular surfaces, affording
a detailed study of its properties.

\subsubsection{Definition of $h_{ssp}$}

As we have seen, along the general supersingular locus $S_{gss},$
Verschiebung induces isomorphisms 
\begin{equation}
V_{\mathcal{L}}:\mathcal{L}\simeq\mathcal{P}_{0}^{(p)},\,\,\,V_{\mathcal{P}}:\mathcal{P}_{\mu}\simeq\mathcal{L}^{(p)}.
\end{equation}
(The first is unique to $S_{gss},$ the second holds also on the $\mu$-ordinary
stratum.) Consider the isomorphism 
\begin{equation}
V_{\mathcal{P}}^{(p)}\otimes V_{\mathcal{L}}^{-1}:\mathcal{P}_{\mu}^{(p)}\otimes\mathcal{P}_{0}^{(p)}\simeq\mathcal{L}^{(p^{2})}\otimes\mathcal{L}\simeq\mathcal{L}^{p^{2}+1}.
\end{equation}
Its source is the line bundle $\det\mathcal{P}^{(p)}$ which is identified
with $\mathcal{L}^{(p)}\simeq\mathcal{L}^{p}.$ We therefore get a
nowhere vanishing section 
\begin{equation}
\tilde{h}_{ssp}\in H^{0}(S_{gss},\mathcal{L}^{p^{2}-p+1}).
\end{equation}

Our ``secondary'' Hasse invariant is the nowhere vanishing section
\begin{equation}
h_{ssp}=\tilde{h}_{ssp}^{p+1}\in H^{0}(S_{gss},\mathcal{L}^{p^{3}+1}).
\end{equation}
We shall show that $h_{ssp}$ extends to a holomorphic section on
$S_{ss}$, and vanishes at the superspecial points (to a high order).

\subsubsection{Computations at the superspecial points}

The goal of this subsection is to show that $h_{ssp}$ (unlike $\tilde{h}_{ssp}$)
extends over $S_{ss}$, and to compute its order of vanishing at the
superspecial points. We refer to the computations of Proposition \ref{superspecial}$.$
Dualizing (to put us back in the contravariant world), and using the
letters $e_{i},f_{j}$ to denote \emph{the dual basis} to the basis
used there we get the following.

\begin{lemma} Let $x\in S_{ssp}$ be a superspecial point. There
exist formal coordinates $u$ and $v$ so that the formal completion
of $S$ at $x$ is $\widehat{S}=Spf(\kappa[[u,v]]),$ and $D=H_{dR}^{1}(\mathcal{A}/\widehat{S})$
has a basis $f_{3},e_{1},e_{2},e_{3},f_{1},f_{2}$ over $\kappa[[u,v]]$
with the following properties:

(i) $f_{3},e_{1},e_{2}$ is a basis for $\omega_{\mathcal{A}}$

(ii) The basis is symplectic, i.e. the polarization form is $\left\langle e_{i},f_{j}\right\rangle =-\left\langle f_{j},e_{i}\right\rangle =\delta_{ij},$
$\left\langle e_{i},e_{j}\right\rangle =\left\langle f_{i},f_{j}\right\rangle =0.$

(iii) $\mathcal{O}_{\mathcal{K}}$ acts on the $e_{i}$ via $\Sigma$
and on the $f_{j}$ via $\bar{\Sigma}.$

(iv) $V:D\rightarrow D^{(p)}$ is given by $Vf_{3}=ue_{1}^{(p)}+ve_{2}^{(p)},$
$Ve_{1}=$ $uf_{3}^{(p)},$ $Ve_{2}=vf_{3}^{(p)},$ $Ve_{3}=f_{3}^{(p)},$
$Vf_{1}=e_{1}^{(p)},$ $Vf_{2}=e_{2}^{(p)}.$ \end{lemma}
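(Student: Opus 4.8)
The plan is to dualize the explicit display computation from Proposition \ref{superspecial} and read off the matrix of $V$ on $D=H_{dR}^{1}(\mathcal{A}/\widehat{S})$ from the matrix of $F$ on the Cartier module. Recall that in the proof of Proposition \ref{superspecial} we worked with the covariant Cartier module $M/pM$ of $A=\mathcal{A}_x$ and its universal deformation, and showed that after a suitable change of the formal coordinates $u,v$ and of the (symplectic, $\mathcal{O}_{\mathcal{K}}$-compatible) basis $f_{3},e_{1},e_{2},e_{3},f_{1},f_{2}$ of $H_{dR}^{1}(\mathcal{A}^{t}/\widehat{S})$, the Frobenius $F$ on that module is given by the $6\times 6$ Hasse--Witt matrix with the $3\times 3$ block $\bigl(\begin{smallmatrix}0&u&v\\u&0&0\\v&0&0\end{smallmatrix}\bigr)$ in the upper-left corner, identity in the lower-left $3\times3$ block, and zeros elsewhere. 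Dualizing via the perfect pairing $\langle Fx,y\rangle=\langle x,Vy\rangle^{(p)}$ converts this into the matrix of $V$ on $D=H_{dR}^{1}(\mathcal{A}/\widehat{S})$, and the symplectic duality interchanges the roles of $A$ and $A^{t}$ in a way that leaves the shape of the formulae intact.

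Concretely, I would proceed as follows. First, fix the symplectic basis of $D$ to be the dual basis (with respect to $\langle\,,\,\rangle$) to the basis of $H_{dR}^{1}(\mathcal{A}^{t}/\widehat{S})$ used in Proposition \ref{superspecial}, reusing the names $f_{3},e_{1},e_{2},e_{3},f_{1},f_{2}$; this automatically records property (ii), and property (iii) follows because $F$ (hence its dual $V$) commutes with the $\mathcal{O}_{\mathcal{K}}$-action, as was noted in the proof of Proposition \ref{superspecial} via $Frob\circ a=a^{(p)}\circ Frob$. Second, translate $F(f_{3}^{(p)})=ue_{1}+ve_{2}+e_{3}$, $F(e_{1}^{(p)})=uf_{3}+f_{1}$, $F(e_{2}^{(p)})=vf_{3}+f_{2}$, $F(e_{3}^{(p)})=0$, $F(f_{1}^{(p)})=0$, $F(f_{2}^{(p)})=0$ through the duality relation $\langle Fx,y\rangle=\langle x,Vy\rangle^{(p)}$: pairing against each basis vector of $D$ and using the explicit symplectic form $\langle e_{i},f_{j}\rangle=\delta_{ij}$, $\langle e_i,e_j\rangle=\langle f_i,f_j\rangle=0$ yields exactly $Vf_{3}=ue_{1}^{(p)}+ve_{2}^{(p)}$, $Ve_{1}=uf_{3}^{(p)}$, $Ve_{2}=vf_{3}^{(p)}$, $Ve_{3}=f_{3}^{(p)}$, $Vf_{1}=e_{1}^{(p)}$, $Vf_{2}=e_{2}^{(p)}$, which is (iv). Third, for property (i) I observe that $\omega_{\mathcal{A}}=\ker(V\colon D\to D^{(p)})$ in characteristic $p$ (equivalently, $\omega_{\mathcal{A}}$ is the image of $F$, which here is spanned by $F(f_3^{(p)}),F(e_1^{(p)}),F(e_2^{(p)})$ in the $\mathcal{A}^t$-picture, dualizing to the span of $f_3,e_1,e_2$); alternatively, one checks directly from (iv) that $\ker(V)$ on $D$ is spanned by $f_{3},e_{1},e_{2}$ generically and that this is the flat rank-$3$ Hodge subbundle since the deformation is universal and the identification was set up so that $f_3,e_1,e_2$ project to a basis of $Lie$ in the covariant picture.

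The main obstacle is bookkeeping rather than mathematics: one must be careful with the two Frobenius twists and with the fact that the duality between $H_{dR}^{1}(\mathcal{A}/\widehat{S})$ and $H_{dR}^{1}(\mathcal{A}^{t}/\widehat{S})$ swaps $F$ and $V$ \emph{and} swaps the $\Sigma$- and $\bar\Sigma$-parts, so that what was $F$ on the $Lie$-algebra of $\mathcal{A}^{t}$ becomes $V$ on $\omega_{\mathcal{A}}$, and the signs that appeared in Lemma \ref{Braid}(vi) must be tracked through. I would double-check consistency against the already-known special fibre: setting $u=v=0$ must recover the superspecial Dieudonn\'e module $\bar B(1)\oplus\bar S^{2}$ with $V\equiv 0$ on $\omega_{\mathcal{A}}$, which indeed it does since then $Vf_3=Ve_1=Ve_2=0$. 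I would also sanity-check against Lemma \ref{Braid}: restricting to a formal branch $u=\zeta v$ should reproduce, after the appropriate relabelling, a braid of length $3$; this is exactly the computation of $\ker(F)$ along $\widehat S_{ss}[\zeta]$ carried out at the end of the proof of Proposition \ref{superspecial}, so no new work is needed. Once these compatibilities are in place, the lemma is simply the dual transcription of what was already proved, and the proof is one short paragraph.
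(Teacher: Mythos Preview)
Your approach is exactly the paper's: the lemma is introduced with the words ``Dualizing (to put us back in the contravariant world), and using the letters $e_i,f_j$ to denote the dual basis to the basis used there we get the following'', and no further proof is given. Your transcription of the Hasse--Witt matrix through the duality $\langle Fx,y\rangle=\langle x,Vy\rangle^{(p)}$ is the intended mechanism, and your consistency checks (the special fibre at $u=v=0$, the branch restriction) are sensible.

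There is, however, one genuine slip in your justification of (i). You write that $\omega_{\mathcal A}=\ker(V\colon D\to D^{(p)})$; this is false. In the contravariant theory the Hodge filtration is $\omega_{\mathcal A}=(D/pD)[F]=\mathrm{Im}(V)$, not $\ker(V)$ (the paper recalls this explicitly in the discussion preceding Proposition \ref{superspecial}). Indeed, computing $\ker(V)$ directly from the formulae in (iv) gives the span of $f_3-uf_1-vf_2$, $e_1-ue_3$, $e_2-ve_3$, which is \emph{not} $\mathrm{span}(f_3,e_1,e_2)$ once $u,v\neq 0$. The correct route to (i) is the one you mention only parenthetically: in the display set-up of Proposition \ref{superspecial} the basis is a normal decomposition with $VM=\mathrm{span}(e_3,f_1,f_2)$ and $f_3,e_1,e_2$ projecting to a basis of $Lie(A)=M/VM$; dualizing, the annihilator of $VM$ in $D$ is $\omega_{\mathcal A}$, and in the dual basis this is exactly $\mathrm{span}(f_3,e_1,e_2)$. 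Replace your $\ker(V)$ argument by this, and the proof is complete.
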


Using the lemma, we compute along $\widehat{S}_{ss}[\zeta]$, where
$u=\zeta v$ ($\zeta^{p+1}=-1$). See the discussion following (\ref{ssp equation})
for the definition of the formal branch $\widehat{S}_{ss}[\zeta].$
Denote by $\mathcal{P}[\zeta],$ $\mathcal{P}_{0}[\zeta]$ and $\mathcal{L}[\zeta]$
the pull-backs of the corresponding vector bundles to $\widehat{S}_{ss}[\zeta].$
The map $V_{\mathcal{L}}$ is given by 
\begin{equation}
f_{3}\mapsto ue_{1}^{(p)}+ve_{2}^{(p)}=v\cdot(\zeta e_{1}^{(p)}+e_{2}^{(p)})=v\cdot(\zeta^{p}e_{1}+e_{2})^{(p)}\in\mathcal{P}_{0}^{(p)}[\zeta].
\end{equation}
Use $e_{1}\wedge e_{2}=e_{1}\wedge(\zeta^{p}e_{1}+e_{2})$ as a basis
for $\det\mathcal{P}[\zeta].$ Since $V_{\mathcal{P}}^{(p)}$ maps
$e_{1}^{(p)}$ to $(\zeta v)^{p}f_{3}^{(p^{2})},$ $\tilde{h}_{ssp}$
maps $e_{1}^{(p)}\wedge e_{2}^{(p)}=e_{1}^{(p)}\wedge(\zeta^{p}e_{1}+e_{2})^{(p)}$
to 
\begin{eqnarray}
\tilde{h}_{ssp}(e_{1}^{(p)}\wedge e_{2}^{(p)}) & = & \zeta^{p}v^{p-1}f_{3}^{(p^{2})}\otimes f_{3}\notag\\
 & = & \zeta^{p}v^{p-1}f_{3}^{p^{2}+1}=\zeta u^{p-1}f_{3}^{p^{2}+1}.
\end{eqnarray}

\begin{lemma} There does not exist a function $g\in\kappa[[u,v]]/(u^{p+1}+v^{p+1})$
on $\widehat{S}_{ss}$ whose restriction to the branch $\widehat{S}_{ss}[\zeta]$
is $\zeta u^{p-1}.$ \end{lemma}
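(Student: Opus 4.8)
The plan is to argue by contradiction, using the explicit structure of the local ring $\widehat{\mathcal{O}}_{S_{ss},x}\simeq\kappa[[u,v]]/(u^{p+1}+v^{p+1})$ together with the fact that the $p+1$ formal branches $\widehat{S}_{ss}[\zeta]$ correspond to the prime ideals $(u-\zeta v)$ as $\zeta$ runs over the roots of $-1$. Suppose such a $g$ exists. Restriction to the branch $\widehat{S}_{ss}[\zeta']$ is the quotient map $\kappa[[u,v]]/(u^{p+1}+v^{p+1})\twoheadrightarrow\kappa[[u,v]]/(u-\zeta' v)$, so the hypothesis says $g\equiv \zeta' u^{p-1}$ modulo $(u-\zeta' v)$ for \emph{every} $(p+1)$-st root $\zeta'$ of $-1$ — in particular for $\zeta'=\zeta$, but we only need the family of congruences. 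Lift $g$ to a power series $G\in\kappa[[u,v]]$; then $G - \zeta' u^{p-1}$ lies in the ideal generated by $u^{p+1}+v^{p+1}$ and $u-\zeta'v$ inside $\kappa[[u,v]]$, hence in particular $G(\zeta'v,v) = \zeta' (\zeta'v)^{p-1} = \zeta'^{p}v^{p-1} = -\zeta'^{-1}v^{p-1}$ (using $\zeta'^{p+1}=-1$, so $\zeta'^{p}=-\zeta'^{-1}$).

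The key step is then a linear-algebra/interpolation obstruction: write $G(u,v) = \sum_{k\ge 0} G_k(u,v)$ with $G_k$ homogeneous of degree $k$. The equation $G(\zeta'v,v) = -\zeta'^{-1}v^{p-1}$ must hold as an identity in $v$ for each of the $p+1$ values $\zeta'$, so comparing the degree-$(p-1)$ part we get $G_{p-1}(\zeta',1) = -\zeta'^{-1}$ for all $p+1$ roots $\zeta'$ of $-1$, and comparing the part of degree $k\ne p-1$ gives $G_k(\zeta',1)=0$ for all such $\zeta'$. Now $G_{p-1}(\zeta',1)$ is the value at $\zeta'$ of a polynomial in one variable of degree $\le p-1$, and it is required to equal $-\zeta'^{-1} = \zeta'^{p}$ at the $p+1$ roots of $x^{p+1}+1$. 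The function $\zeta'\mapsto\zeta'^{p}$ on the set of roots of $x^{p+1}+1$ is the restriction of the degree-$p$ polynomial $x^{p}$; since $p < p+1$, the polynomial $x^{p} - G_{p-1}(x,1)$ has degree exactly $p$ (the leading term $x^{p}$ survives because $\deg G_{p-1}(x,1)\le p-1$) yet vanishes at all $p+1$ distinct roots, which is impossible. This contradiction establishes the lemma.

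The main obstacle — really the only subtlety — is making the passage from "restriction to each branch equals $\zeta u^{p-1}$" to clean polynomial identities at the $p+1$ points $\zeta'$. One has to be careful that the branches $\widehat{S}_{ss}[\zeta']$ are the reductions of $\widehat{S}_{ss}$ modulo $(u-\zeta'v)$ and that these are exactly the minimal primes, so a function on $\widehat{S}_{ss}$ is pinned down on each branch by this substitution; and one has to track the twist $\zeta'^{p}=-\zeta'^{-1}$ correctly (this is the same relation $\zeta^{-p}=-\zeta$ used in Proposition \ref{superspecial}). I would also remark that the choice $k=p-1$ is forced: it is precisely the exponent appearing in the computation $\tilde{h}_{ssp}(e_{1}^{(p)}\wedge e_{2}^{(p)}) = \zeta u^{p-1}f_{3}^{p^{2}+1}$ above, and it is what makes $x^{p}-G_{p-1}(x,1)$ land just one degree short of the $p+1$ interpolation conditions — so the argument is sharp, and indeed explains why one must pass to $h_{ssp}=\tilde h_{ssp}^{\,p+1}$, where the exponent becomes $(p-1)(p+1)=p^{2}-1\ge p+1$ and the obstruction disappears.
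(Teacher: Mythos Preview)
Your proof is correct, but it takes a different route from the paper's. The paper observes that on each branch $u=\zeta v$ one has $v\cdot(\zeta u^{p-1})=u^{p}$, so if $g$ existed then $vg=u^{p}$ on \emph{every} branch; since $\kappa[[u,v]]/(u^{p+1}+v^{p+1})$ is reduced, this forces $vG-u^{p}\in(u^{p+1}+v^{p+1})$ for any lift $G$, which is impossible because the monomial $u^{p}$ (of degree $p<p+1$) cannot be cancelled by $vG$ nor lie in the ideal. Your argument instead substitutes $u=\zeta'v$ directly, extracts the degree-$(p-1)$ homogeneous piece, and finds that the polynomial $G_{p-1}(x,1)$ of degree $\le p-1$ would have to interpolate $x^{p}$ at the $p+1$ distinct roots of $x^{p+1}+1$, a Vandermonde-type obstruction. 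The paper's trick of multiplying by $v$ is slicker --- it collapses the family of branch conditions into a single $\zeta$-free equation in one stroke --- while your interpolation argument is a bit longer but has the virtue of making transparent exactly \emph{why} the exponent $p-1$ is the obstruction (one degree short of what $p+1$ interpolation conditions can accommodate), which dovetails nicely with your closing remark about passing to the $(p+1)$-st power.
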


\begin{proof} Had there been such a function $g$, represented by
a power series $G\in\kappa[[u,v]],$ then we would get $vg=u^{p}$
on $\widehat{S}_{ss}[\zeta]$ for every $\zeta,$ hence 
\begin{equation}
vG-u^{p}\in(u^{p+1}+v^{p+1})\subset\kappa[[u,v]].
\end{equation}
But any power series in the ideal $(u^{p+1}+v^{p+1})$ contains only
terms of degree $\ge p+1,$ while in $vG-u^{p}$ we can not cancel
the term $u^{p}.$ \end{proof}

The lemma means that $\tilde{h}_{ssp}$ can not be extended over $S_{ss}$
to a section of $Hom(\det\mathcal{P}^{(p)},\mathcal{L}^{p^{2}+1})\simeq\mathcal{L}^{p^{2}-p+1}.$
However, when we raise it to a $p+1$ power the dependence on $\zeta$
disappears. It then extends to a section $h_{ssp}$ of $\mathcal{L}^{p^{3}+1}$
over $S_{ss},$ given over $\widehat{S}_{ss}$ (the formal completion
of $S_{ss}$ at $x$) by the equation 
\begin{equation}
h_{ssp}=\varepsilon u^{p^{2}-1}f_{3}^{p^{3}+1},\label{order at ssp}
\end{equation}
where $\varepsilon\in\kappa[[u,v]]^{\times}$ depends on the isomorphism
between $\det\mathcal{P}$ and $\mathcal{L}.$

\begin{theorem} The secondary Hasse invariant $h_{ssp}$ belongs
to $H^{0}(S_{ss},\mathcal{L}^{p^{3}+1}).$ It vanishes precisely at
the points of $S_{ssp}.$ The subscheme $``h_{ssp}=0"$ of $S_{ss}$
is not reduced. At $x\in S_{ssp}$, with $u$ and $v$ as above, it
is the spectrum of 
\begin{equation}
\kappa[[u,v]]/(u^{p+1}+v^{p+1},u^{p^{2}-1},v^{p^{2}-1}).
\end{equation}
\end{theorem}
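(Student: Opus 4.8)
The plan is to deduce the theorem directly from the local computation already carried out, namely formula (\ref{order at ssp}). First I would record the global statement: the section $h_{ssp}\in H^0(S_{gss},\mathcal{L}^{p^3+1})$ has been shown to be nowhere vanishing on $S_{gss}$, so the only thing left is the behaviour near the superspecial points. Since $S_{ss}$ is a curve (of pure dimension $1$) and $S_{ssp}$ is a finite set of closed points, extending a section across $S_{ssp}$ is a local question, and it suffices to work in the formal completion $\widehat{S}_{ss}=\mathrm{Spf}(\kappa[[u,v]]/(u^{p+1}+v^{p+1}))$ at a point $x\in S_{ssp}$. The computation following (\ref{ssp equation}) shows that $\tilde h_{ssp}$, as a section of $\mathcal{L}^{p^2-p+1}$, is given on the branch $\widehat{S}_{ss}[\zeta]$ by $\zeta u^{p-1}f_3^{p^2+1}$ (divided by the chosen trivialisation of $\det\mathcal{P}$), and one Lemma above shows the function $\zeta u^{p-1}$ does not patch across the branches; but raising to the $(p+1)$st power kills the $\zeta$-dependence, since $\zeta^{p+1}=-1$ is independent of $\zeta$, so $h_{ssp}=\tilde h_{ssp}^{p+1}$ is given on each branch by $-u^{p(p-1)}f_3^{p^3+1}$ times a unit, i.e. by the \emph{single} element $\varepsilon u^{p^2-p}\cdot(\text{something})$. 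I would double-check the exponent arithmetic: $(p-1)(p+1)=p^2-1$, so $h_{ssp}=\varepsilon u^{p^2-1}f_3^{p^3+1}$, matching (\ref{order at ssp}) — here one must be a little careful that the power series $u^{p^2-1}$ (resp. $v^{p^2-1}$) really does define a well-defined element of $\kappa[[u,v]]/(u^{p+1}+v^{p+1})$ agreeing with the branch-wise values, which follows because $u^{p^2-1}$ is already a genuine power series (no patching needed, unlike for $u^{p-1}$).

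Next I would argue that $h_{ssp}$ so defined is a \emph{regular} section of $\mathcal{L}^{p^3+1}$ over all of $S_{ss}$, not merely a rational one: it is regular on $S_{gss}$ (indeed invertible there), and in the formal neighbourhood $\widehat{S}_{ss}$ of each $x\in S_{ssp}$ it is given by $\varepsilon u^{p^2-1}f_3^{p^3+1}$ with $\varepsilon$ a unit and $f_3$ a basis of $\mathcal{L}$, hence it lies in $\mathcal{O}_{\widehat{S}_{ss}}\otimes\mathcal{L}^{p^3+1}$. Since $S_{ss}$ is covered by $S_{gss}$ together with these formal (equivalently, étale or Zariski) neighbourhoods, and $h_{ssp}$ glues (it is the $(p+1)$st power of $\tilde h_{ssp}$ wherever the latter is defined, and on the overlap $\widehat{S}_{ss}\cap S_{gss}$ both descriptions agree by construction), this shows $h_{ssp}\in H^0(S_{ss},\mathcal{L}^{p^3+1})$. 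Its zero locus is then read off from the local equations: away from $S_{ssp}$ it is nonvanishing, and at $x\in S_{ssp}$ the ideal of the subscheme $``h_{ssp}=0"$ inside $\widehat{\mathcal{O}}_{S_{ss},x}=\kappa[[u,v]]/(u^{p+1}+v^{p+1})$ is $(u^{p^2-1})$. By the symmetry $u\leftrightarrow v$ of the defining equation $u^{p+1}+v^{p+1}$ (which just relabels the branches), one has equally $(u^{p^2-1})=(v^{p^2-1})$ in this ring, so the ideal can be written $(u^{p^2-1},v^{p^2-1})$; intersecting with the relation gives the presentation $\kappa[[u,v]]/(u^{p+1}+v^{p+1},u^{p^2-1},v^{p^2-1})$ claimed. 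In particular the support is exactly $\{x\}$ (the only common zero of $u^{p^2-1}$ and $v^{p^2-1}$ modulo $u^{p+1}+v^{p+1}$), so $``h_{ssp}=0"$ is supported on $S_{ssp}$, and it is visibly non-reduced (e.g. $u$ is a nonzero nilpotent in that quotient, since $u^{p^2-1}=0$ but $u\neq0$ when $p^2-1>1$).

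The one genuine subtlety — the main obstacle — is justifying that the equation (\ref{order at ssp}) computed on the \emph{union of the formal branches} $\bigcup_\zeta\widehat{S}_{ss}[\zeta]$ actually pins down $h_{ssp}$ as an element of $\widehat{\mathcal{O}}_{S_{ss},x}$, i.e. that $\widehat{S}_{ss}\hookrightarrow\bigsqcup_\zeta\widehat{S}_{ss}[\zeta]$ induces an \emph{injection} on functions, or more precisely that a section of a line bundle on $\widehat{S}_{ss}$ is determined by its restrictions to the branches. This is true because $\kappa[[u,v]]/(u^{p+1}+v^{p+1})$ is reduced (the polynomial $u^{p+1}+v^{p+1}=\prod_\zeta(u-\zeta v)$ is squarefree, as $p\nmid p+1$, so $(u^{p+1}+v^{p+1})$ is a radical ideal and the ring injects into the product of its quotients by the minimal primes $(u-\zeta v)$); thus an element with prescribed images $\zeta u^{p-1}$ on the branches, if it exists in the ring, is unique, and one verifies $u^{p^2-1}$ (read as $u^{p^2-1}=(u^{p-1})^{p+1}$, and on the branch $u-\zeta v$ one has $u^{p+1}=-u^{p+1}/... $ — concretely $u^{p^2-1}$ restricts to $(\zeta u^{p-1})^{?}$... ) — here I would simply present the bookkeeping showing the honest power series $u^{p^2-1}$ restricts on $\widehat{S}_{ss}[\zeta]$ to the same thing as $(\zeta u^{p-1})^{p+1}$ up to the unit coming from $\det\mathcal{P}\simeq\mathcal{L}$, which is the content already established. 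Everything else — non-vanishing on $S_{gss}$, the shape of the local ring, non-reducedness — is then immediate from the explicit presentation.
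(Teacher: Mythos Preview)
Your proposal is correct and follows essentially the same approach as the paper; in fact the paper gives no separate proof of this theorem, presenting it instead as a summary of the local computation culminating in (\ref{order at ssp}), which is precisely what you invoke. Your added justifications---that extension across the finite set $S_{ssp}$ is a local question, that $\kappa[[u,v]]/(u^{p+1}+v^{p+1})$ is reduced (since $u^{p+1}+v^{p+1}=\prod_\zeta(u-\zeta v)$ is squarefree) so a section is determined by its branch restrictions, and that $(u^{p^2-1})=(v^{p^2-1})$ in this ring (more directly: $u^{p^2-1}=(u^{p+1})^{p-1}=(-1)^{p-1}(v^{p+1})^{p-1}=v^{p^2-1}$)---supply details the paper leaves implicit.
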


\section{On the number of supersingular curves on $S$}

We continue to work over $\kappa,$ and to ease the notation drop
the subscript $\kappa$.

\subsection{The connected components of $S_{ss}$}

The Picard surface $\bar{S}$ is not connected. The supersingular
locus $S_{ss}$ is, however, as connected as it could be.

For the next proposition we need, besides the smooth compactification,
also the Baily-Borel (singular) compactification $S^{*}$ of $S,$
see {[}25{]} and {[}3{]}. Every geometric component of $C=\bar{S}-S$
is contracted in $S^{*}$ to a point. The surface $S^{*}$ is known
to be normal.

\begin{proposition} The scheme $S_{\mu}^{*}=S^{*}-S_{ss}$ is affine
and the intersection of $S_{ss}$ with every connected component of
$S$ is connected. \end{proposition}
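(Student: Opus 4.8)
The plan is to deduce both assertions from a single affineness statement, namely that $S^*_\mu = S^* - S_{ss}$ is affine. Granting this, the connectedness of $S_{ss} \cap S'$ for each connected component $S'$ of $S$ follows from a standard argument comparing the two compactifications. Concretely: let $\bar{S}'$ be the corresponding connected component of $\bar{S}$ and $S'^*$ of $S^*$. The surface $\bar{S}'$ is a smooth connected projective surface. If $S_{ss} \cap S'$ were disconnected, say $S_{ss} \cap S' = Z_1 \sqcup Z_2$ with both $Z_i$ nonempty and closed, then (since $S_{ss}$ is one-dimensional and its components can be contracted — cf. the negativity $E_j . E_j < 0$ used in Proposition~\ref{canonical class}, and more relevantly the contractibility coming from the Rapoport--Zink uniformization) one would build a nonconstant global regular function on an affine neighborhood that separates the $Z_i$; but actually the cleanest route is cohomological: $S'^*_\mu$ affine of dimension $2$ forces $H^0(S'^*_\mu, \mathcal{O})$ to have ``many'' functions, and the number of connected components of the boundary $S'^* - S'^*_\mu = S_{ss} \cap S'$ is governed by $H^1_c$ or equivalently by the failure of $S'^*$ to be affine. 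I would phrase it as: $S'^*$ is projective (being a compactification of $S'$), $S'^*_\mu$ is affine open in it, hence its complement $S_{ss}\cap S'$ is connected of pure codimension $1$ — this is the general fact that the complement of an affine open subset of a normal projective variety of dimension $\ge 2$ is connected (an application of the Enriques--Severi--Zariski lemma / Hartshorne's connectedness results, using that $S'^*$ is normal, as stated in the excerpt). This is where normality of $S^*$ is essential.

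So the real content is the affineness of $S^*_\mu$. The plan here is to exhibit an explicit ample line bundle on $S^*$ whose some power has a section cutting out exactly $S_{ss}$. The Hasse invariant $h_{\bar{\Sigma}} \in H^0(\bar{S}, \mathcal{L}^{p^2-1})$ from Theorem~\ref{Hasse Invariant} has divisor precisely $S_{ss}$, which (by Theorem~\ref{Vollaard}(i)) is disjoint from the cuspidal divisor $C$. Therefore $h_{\bar{\Sigma}}$ descends to a section of the corresponding line bundle on the Baily--Borel compactification $S^*$ — here one uses that $\mathcal{L}$ extends to $S^*$ as (a multiple of) the tautological ample bundle $\mathcal{O}(1)$ on $S^*$, since $S^*$ is by construction $\mathrm{Proj}$ of the ring of modular forms $\bigoplus_k M_k(N,\kappa) = \bigoplus_k H^0(\bar{S}, \mathcal{L}^k)$. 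Thus on $S^*$ some power of $\mathcal{O}(1)$ has a section whose zero locus is exactly $S_{ss}$, and consequently $S^*_\mu = S^* \setminus S_{ss}$ is the non-vanishing locus of a section of an ample line bundle on the projective variety $S^*$, hence affine. (One must be slightly careful: $S_{ss}$ meets $S$, so on $S^*$ its closure is still $S_{ss}$ itself and does not pass through the finitely many cusp points; the section $h_{\bar{\Sigma}}$ restricted to a neighborhood of each cusp is a nowhere-vanishing section, so it genuinely extends across the cusps of $S^*$.)

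The main obstacle, I expect, is the bookkeeping around the Baily--Borel compactification: one needs that $\mathcal{L}$ (or a suitable power) really is the restriction to $\bar{S}$ of the ample generator of $\mathrm{Pic}$-or-rather of the $\mathcal{O}(1)$ on $S^*$, and that the Hasse invariant, a priori defined only on $\bar{S}$, extends to a section over all of $S^*$. The first point is essentially the definition of $S^*$ as $\mathrm{Proj}$ of the graded ring of modular forms (weight graded), together with the Koecher principle already invoked in the excerpt to identify $M_k(N,R) = H^0(\bar{S},\mathcal{L}^k) = H^0(S,\mathcal{L}^k)$; the ampleness of $\mathcal{O}(1)$ on $\mathrm{Proj}$ of a graded ring finitely generated in positive degrees is automatic. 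The second point reduces to checking that $h_{\bar{\Sigma}}$ is non-vanishing in a formal/analytic neighborhood of each boundary component $E_i \subset C$ — which is exactly the statement that the $\mu$-ordinary locus $\bar{S}_\mu$ contains $C$, already established (the semi-abelian fibers along $C$ are extensions of an ordinary part by a torus, so $V_{\mathcal{P}}$ and $V_{\mathcal{L}}$ behave as in the $\mu$-ordinary case and $h_{\bar{\Sigma}} = V_{\mathcal{P}}^{(p)} \circ V_{\mathcal{L}}$ is invertible there). Once these two points are in hand, affineness of $S^*_\mu$ is immediate, and the connectedness of $S_{ss}$ on each component follows from the normal-projective connectedness lemma as above.
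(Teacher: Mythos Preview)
Your proposal is correct and follows essentially the same approach as the paper: exhibit $S_{ss}$ as the zero locus of (a power of) the Hasse invariant, regarded as a global section of an ample line bundle on the normal projective Baily--Borel compactification $S^*$, so that $S^*_\mu$ is affine and connectedness of $S_{ss}$ on each component follows from Hartshorne III, Corollary 7.9. The paper handles the bookkeeping obstacle you anticipate in exactly the way you suggest, with one refinement worth noting: it does not claim $\mathcal{L}$ itself extends to $S^*$ (and remarks in a footnote that this is unclear), only that $\mathcal{L}^m = \pi^*\mathcal{O}_{S^*}(1)$ for suitable $m$ divisible by $p^2-1$, so one works with $h_{\bar{\Sigma}}^{m/(p^2-1)}$ rather than $h_{\bar{\Sigma}}$.
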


\begin{proof} The line bundle $\mathcal{L}$ is ample on $S$, even
over $R_{0}$.\footnote{One way to see it is to use the ampleness of the Hodge bundle $\det\omega_{\mathcal{A}}\simeq\mathcal{L}^{2}$
(pull back from Siegel space, where it is known to be ample by {[}Fa-Ch{]}).} Hence for large enough $m,$ which we take to be a multiple of $p^{2}-1,$
$\mathcal{L}^{m}$ is very ample, and by {[}25{]} the Baily-Borel
compactification $S^{*}$ is the closure of $S$ in the projective
embedding supplied by the linear system $H^{0}(S,\mathcal{L}^{m}).$
It follows that $\mathcal{L}^{m}$ has an extension to a line bundle
on $S^{*}$ which we denote $\mathcal{O}_{S^{*}}(1),$ since it comes
from the restriction of the $\mathcal{O}(1)$ of the projective space
to $S^{*}.$ Moreover, Larsen proves that on the smooth compactification
$\bar{S},$ $\mathcal{L}^{m}=\pi^{*}\mathcal{O}_{S^{*}}(1)$ where
$\pi:\bar{S}\rightarrow S^{*}$.\footnote{It is not clear that $\mathcal{L}$ itself has an extension to a line
bundle on $S^{*},$ or that $\pi_{*}\mathcal{L},$ which is a coherent
sheaf extending $\mathcal{L}|_{S},$ is a line bundle (the problem
lying of course only at the cusps). In other words, it is not clear
that we can extract an $m$th root of $\mathcal{O}_{S^{*}}(1)$ as
a line bundle.}

Replacing $h_{\bar{\Sigma}}$ by its power $h_{\bar{\Sigma}}^{m/(p^{2}-1)}$,
this power becomes a global section of $\mathcal{L}^{m}$, hence its
zero locus $S_{ss}$ a hyperplane section of $S^{*}$ in the projective
embeding supplied by $H^{0}(S,\mathcal{L}^{m}).$ Its complement is
therefore affine. The second claim follows from the fact {[}18{]},
III, 7.9, that a positive dimensional hyperplane section of a normal
projective variety is connected. \end{proof}

\subsection{The number of irreducible components}

\subsubsection{The degree of $\mathcal{L}$ along an irreducible component of $S_{ss}$}

Assume that $N$ is large enough (depending on $p$) so that Theorem
\ref{Vollaard}(iii) holds. Each irreducible component $Z$ of $S_{ss}$
is non-singular, and the secondary Hasse invariant $h_{ssp}$ has
a zero of order $p^{2}-1$ at every superspecial point of $Z$, as
follows from (\ref{order at ssp}). Each component contains $p^{3}+1$
superspecial points. It follows that if $Z$ is such a component,
\begin{equation}
\deg(\mathcal{L}^{p^{3}+1}|_{Z})=\deg(div_{Z}(h_{ssp}))=(p^{3}+1)(p^{2}-1).
\end{equation}
We have proved the following lemma.

\begin{lemma} Let $Z$ be an irreducible component of $S_{ss},$
and assume that $N$ is large enough. Then $\deg(\mathcal{L}|_{Z})=p^{2}-1.$
\end{lemma}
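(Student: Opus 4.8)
The plan is to extract the statement directly from the vanishing computation for the secondary Hasse invariant, which has already been done in the preceding subsection. Recall that $h_{ssp}$ is a global section of $\mathcal{L}^{p^3+1}$ over $S_{ss}$, and that the irreducible component $Z$ is nonsingular (this uses the hypothesis that $N$ is large, i.e. Theorem~\ref{Vollaard}(iii)). On a nonsingular projective curve, the degree of a line bundle equals the degree of the divisor of any nonzero rational section; since $h_{ssp}$ is a genuine (regular) section and not identically zero on $Z$ — it is nonvanishing on the dense open $Z \cap S_{gss}$ by construction of $\tilde h_{ssp}$ — we may compute $\deg(\mathcal{L}^{p^3+1}|_Z)$ as $\deg(\mathrm{div}_Z(h_{ssp}))$.

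Next I would identify that divisor. By the local formula \eqref{order at ssp}, in the formal completion of $S_{ss}$ at a superspecial point $x$ the section $h_{ssp}$ equals $\varepsilon u^{p^2-1} f_3^{p^3+1}$ with $\varepsilon$ a unit, so after trivializing $\mathcal{L}^{p^3+1}$ locally the section vanishes to order exactly $p^2-1$ along $Z$ at $x$ (here $u$ restricts to a uniformizer on the branch $\widehat{S}_{ss}[\zeta]$ that is the local model of $Z$ at $x$). Away from $S_{ssp}$ the section $h_{ssp} = \tilde h_{ssp}^{p+1}$ is nowhere vanishing. Hence $\mathrm{div}_Z(h_{ssp}) = (p^2-1)\sum_{x \in Z \cap S_{ssp}} [x]$. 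By Theorem~\ref{Vollaard}(iii), $Z$ carries exactly $p^3+1$ superspecial points, so $\deg(\mathrm{div}_Z(h_{ssp})) = (p^3+1)(p^2-1)$.

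Finally I would conclude: $\deg(\mathcal{L}^{p^3+1}|_Z) = (p^3+1)\deg(\mathcal{L}|_Z)$, so $(p^3+1)\deg(\mathcal{L}|_Z) = (p^3+1)(p^2-1)$, and dividing by the nonzero integer $p^3+1$ gives $\deg(\mathcal{L}|_Z) = p^2-1$. There is essentially no obstacle here, since all the analytic work — the local shape of $h_{ssp}$, its nonvanishing off $S_{ssp}$, the nonsingularity of $Z$, and the count of superspecial points per component — has already been established; the only points to be careful about are that $h_{ssp}$ is a regular section (so its divisor is effective and the order-$p^2-1$ vanishing is a genuine zero, not a pole) and that $u$ is indeed a local uniformizer of $Z$ at $x$, which follows from the description of the branch $\widehat{S}_{ss}[\zeta]$ as $\mathrm{Spf}(\kappa[[u,v]]/(u-\zeta v))$ in the proof of Proposition~\ref{superspecial}.
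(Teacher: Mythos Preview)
Your proposal is correct and follows essentially the same argument as the paper: both compute $\deg(\mathcal{L}^{p^{3}+1}|_{Z})$ as the degree of $\mathrm{div}_{Z}(h_{ssp})$, use the local formula (\ref{order at ssp}) to see that $h_{ssp}$ vanishes to order $p^{2}-1$ at each of the $p^{3}+1$ superspecial points of $Z$, and divide by $p^{3}+1$. Your write-up simply spells out in more detail the justifications (non-singularity of $Z$, non-vanishing of $h_{ssp}$ on $S_{gss}$, $u$ restricting to a uniformizer on the branch) that the paper leaves implicit.
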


\subsubsection{A computation of intersection numbers}

Let 
\begin{equation}
Z=\bigcup_{i=1}^{n}Z_{i}
\end{equation}
be the decomposition into irreducible components of a single connected
component $Z$ of $S_{ss}$ (recall that $Z$ is the intersection
of $S_{ss}$ with a connected component of $\bar{S}$). If $N$ is
large, then the $Z_{i}$ are smooth, and as they are Fermat curves
of degree $p+1$, their genus is $g(Z_{i})=p(p-1)/2.$

\begin{theorem} \label{ss curves}Let $c_{2}$ be the Euler characteristic
of the connected component of $\bar{S}$ containing $Z,$ i.e. if
over $\Bbb{C}$ this connected component is $\bar{X}_{\Gamma}$ then
\begin{equation}
c_{2}=\sum_{i=0}^{4}(-1)^{i}\dim_{\Bbb{C}}H^{i}(\bar{X}_{\Gamma},\Bbb{C}).
\end{equation}
Then the number $n$ of irreducible components of $Z$ is given by
\begin{equation}
3n=c_{2}.
\end{equation}
\end{theorem}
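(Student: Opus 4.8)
The plan is to compute the second Chern class $c_2$ of the connected component $\bar{X}_\Gamma$ of $\bar{S}$ by intersection theory, using the automorphic line bundle $\mathcal{L}$ together with the two Hasse invariants constructed above, and to compare the outcome with the combinatorial data of the supersingular configuration $Z = \bigcup_{i=1}^n Z_i$. Throughout I would assume $N$ is large so that Theorem~\ref{Vollaard}(iii) applies: each $Z_i$ is a smooth Fermat curve of genus $g(Z_i)=p(p-1)/2$, carrying $p^3+1$ superspecial points, through each of which pass exactly $p+1$ of the $Z_i$, any two components meeting transversally in at most one point. Write $D = \sum_i Z_i$ for the reduced supersingular divisor on $\bar{S}$; by Theorem~\ref{Hasse Invariant} we have $D = \operatorname{div}(h_{\bar\Sigma})$, so $\mathcal{O}_{\bar S}(D)\simeq\mathcal{L}^{p^2-1}$.

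The key inputs are three numerical identities. First, from the preceding lemma, $\deg(\mathcal{L}|_{Z_i}) = p^2-1$ for every $i$. Second, since $D\sim (p^2-1)\,c_1(\mathcal{L})$ (as divisor classes restricted to the relevant component), restricting to a fixed $Z_j$ gives
\begin{equation}
(p^2-1)(p^2-1) = \deg(\mathcal{L}^{p^2-1}|_{Z_j}) = D\cdot Z_j = Z_j^2 + \sum_{i\ne j} Z_i\cdot Z_j = Z_j^2 + (p^3+1)(p+1),
\end{equation}
because each of the $p^3+1$ superspecial points on $Z_j$ contributes the $p$ transversal intersections with the other branches through it, i.e. $\sum_{i\ne j}Z_i\cdot Z_j = (p^3+1)\cdot p$ --- wait, I must be careful: through each superspecial point of $Z_j$ pass $p+1$ components counting $Z_j$, hence $p$ others, giving $\sum_{i\ne j}Z_i\cdot Z_j = (p^3+1)p$; so $Z_j^2 = (p^2-1)^2 - (p^3+1)p$. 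Third, the adjunction formula on the smooth surface $\bar S$ gives $2g(Z_j)-2 = Z_j^2 + Z_j\cdot K_{\bar S}$, and by Corollary/Proposition~\ref{canonical class} we have $K_{\bar S}\sim \mathcal{L}^3 \otimes \mathcal{O}(C)^\vee$ with $C$ disjoint from $S_{ss}$, so $Z_j\cdot K_{\bar S} = 3\deg(\mathcal{L}|_{Z_j}) = 3(p^2-1)$. These three relations let me solve for everything in terms of $p$ and $n$.

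The final step is to extract $c_2$. The strategy is to use Noether's formula $c_1(\bar S)^2 + c_2(\bar S) = 12\chi(\mathcal{O}_{\bar S})$ together with $K_{\bar S}^2 = c_1^2$ computed from $K_{\bar S}\sim 3c_1(\mathcal{L}) - C$, reducing $K_{\bar S}^2$ to $9\,c_1(\mathcal{L})^2$ plus boundary terms that vanish because $\mathcal{L}|_{E_i}$ and $C$-self-intersections were pinned down in Proposition~\ref{canonical class}; then relate $c_1(\mathcal{L})^2$ to the supersingular configuration via $(p^2-1)^2 c_1(\mathcal{L})^2 = D^2 = \sum_j D\cdot Z_j = n(p^2-1)^2$, giving $c_1(\mathcal{L})^2 = n$. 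Combined with the adjunction computation summed over the $n$ components, $\sum_j(2g(Z_j)-2) = D^2 + D\cdot K_{\bar S}$, i.e. $n(p(p-1)-2) = n + 3n = 4n$ after dividing by $(p^2-1)$ --- this is a consistency check, not the answer. The honest route to $c_2$ is: $c_2(\bar S)$ restricted to this component equals $e(\bar X_\Gamma)$; I would compute it by the "stratification" $\bar S = \bar S_\mu \sqcup S_{gss} \sqcup S_{ssp}$, but more robustly by Noether plus the Chern-class relation $c_2 = 12\chi(\mathcal{O}) - c_1^2 = 12\chi(\mathcal{O}) - 9n$ and identifying $\chi(\mathcal{O}_{\bar S})$ in terms of $n$ via Riemann--Roch for $\mathcal{L}$ on the surface (the Hilbert polynomial $\chi(\mathcal{L}^k) = \tfrac{k^2}{2}\mathcal{L}^2 - \tfrac{k}{2}\mathcal{L}\cdot K + \chi(\mathcal{O})$), where the $k=p^2-1$ value is controlled because $\mathcal{L}^{p^2-1}=\mathcal{O}(D)$ with $D$ a disjoint union of Fermat curves whose cohomology is explicit. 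Carrying this through should collapse to $c_2 = 3n$.

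I expect the main obstacle to be the bookkeeping of boundary contributions: the isomorphism $\det\mathcal{P}\simeq\mathcal{L}$ is only known over $R_0$ up to a fractional-ideal twist, and $K_{\bar S}$ genuinely involves $\mathcal{O}(C)$, so I must verify carefully that every intersection number I need ($\mathcal{L}\cdot E_i$, $E_i^2$, $D\cdot E_i$) is either zero or already computed in Proposition~\ref{canonical class}, and that passing between $\bar S$ over $R_0$ and its characteristic-$p$ fiber does not disturb these numbers (flatness of $\bar S/R_0$ and constancy of the relevant intersection numbers along the base). The second delicate point is making the identity $\deg(\mathcal{L}|_{Z_i}) = p^2-1$ --- which rests on the order-$(p^2-1)$ vanishing of $h_{ssp}$ at superspecial points from \eqref{order at ssp} --- interact correctly with the self-intersection computation, since a sign error or an off-by-$p$ in the count $\sum_{i\ne j}Z_i\cdot Z_j$ propagates directly into $c_2$. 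Everything else is linear algebra in $p$ and $n$, and the factor $3$ in $3n=c_2$ should emerge from the $\mathcal{L}^3\simeq\Omega^2_S$ relation.
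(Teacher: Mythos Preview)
Your derivation of $c_1(\mathcal{L})^2 = n$ is correct and matches the paper exactly: from $\mathcal{O}(D)\simeq\mathcal{L}^{p^2-1}$ and $D\cdot Z_j = (p^2-1)^2$ one gets $D^2 = n(p^2-1)^2$, hence $c_1(\mathcal{L})^2 = n$. The adjunction and intersection bookkeeping (with the correction $\sum_{i\ne j}Z_i\cdot Z_j = (p^3+1)p$ that you caught yourself) is also right.

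The gap is in the final step, where you try to extract $c_2$ via Noether's formula. This requires both $K_{\bar S}^2$ and $\chi(\mathcal{O}_{\bar S})$, and neither is available from the data you have assembled. Concretely: with $K_{\bar S}\sim 3c_1(\mathcal{L}) - C$ and $\mathcal{L}\cdot C = 0$ one gets $K_{\bar S}^2 = 9n + C^2$, and the boundary term $C^2 = \sum_i E_i^2$ is strictly \emph{negative} (each $E_i$ is contractible), not zero; Proposition~\ref{canonical class} does not pin it down. Likewise, your Riemann--Roch scheme for $\chi(\mathcal{O}_{\bar S})$ is circular: to solve $\chi(\mathcal{L}^k) = \tfrac{k^2}{2}c_1(\mathcal{L})^2 - \tfrac{k}{2}\mathcal{L}\cdot K + \chi(\mathcal{O})$ for $\chi(\mathcal{O})$ you need an independent value of $\chi(\mathcal{L}^k)$, and ``$\mathcal{L}^{p^2-1}=\mathcal{O}(D)$ with $D$ a union of Fermat curves'' does not give you $\chi(\bar S,\mathcal{O}(D))$ --- that is a surface cohomology group, not the cohomology of the curve $D$ (which, incidentally, is not a disjoint union). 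Your ``consistency check'' also slips: $\sum_j(2g(Z_j)-2)=\sum_j Z_j^2 + D\cdot K_{\bar S}$, not $D^2 + D\cdot K_{\bar S}$, since the cross-terms $Z_i\cdot Z_j$ are missing.

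The paper closes the argument by a different route that avoids Noether entirely. Having established $9n = 9\,c_1(\mathcal{L})^2 = (K_{\bar S}+C)^2$, it invokes Holzapfel's identity $(K_{\bar S}+C)^2 = 3\,c_2(\bar X_\Gamma)$ (a logarithmic Hirzebruch--Mumford proportionality statement for ball quotients, [19] (4.3.11$'$)), which immediately gives $3n = c_2$. This external input is the crux: for complex ball quotients the log-Chern numbers satisfy $\bar c_1^2 = 3\bar c_2$, and it is precisely this equality that produces the factor $3$; it is not recoverable from the characteristic-$p$ intersection data alone. The paper then removes the ``$N$ large'' hypothesis by noting that both $n$ and $c_2$ scale by the degree under the \'etale covers $S(N')\to S(N)$ (using Theorem~\ref{Vollaard}(iv) for $n$), a step you should also include.
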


This $c_{2}=c_{2}(\bar{X}_{\Gamma})$ is given by Holzapfel's formula.
To get the theorem quoted in the introduction, one would have to sum
over all the connected components.

\begin{proof} We first prove the theorem under the assumption that
$N$ is large enough. Computing intersection numbers, and using the
fact that every $Z_{i}$ meets transversally $(p^{3}+1)p$ other $Z_{j}^{\prime}s$
(there are $p^{3}+1$ intersection points on $Z_{i},$ through each
of which pass $p+1$ components, including $Z_{i}$ itself), we get
\begin{equation}
(Z.Z)=n(p^{3}+1)p+\sum_{i=1}^{n}(Z_{i}.Z_{i}).
\end{equation}
Denote by $K_{\bar{S}}$ a canonical divisor on the given connected
component of $\bar{S}.$ From the adjunction formula, 
\begin{equation}
p(p-1)-2=2g(Z_{i})-2=Z_{i}.(Z_{i}+K_{\bar{S}}).
\end{equation}
As we have seen in Proposition \ref{canonical class}, $\mathcal{O}(K_{\bar{S}}+C)\simeq\mathcal{L}^{3}$
where $C$ is the cuspidal divisor (on the given connected component
of $\bar{S})$. Hence 
\begin{equation}
(Z_{i}.K_{\bar{S}})=Z_{i}.(K_{\bar{S}}+C)=\deg(\mathcal{L}^{3}|_{Z_{i}})=3(p^{2}-1)
\end{equation}
by the previous lemma. We get 
\begin{equation}
(Z_{i}.Z_{i})=-2p^{2}-p+1.
\end{equation}
Plugging this into the expression for $(Z.Z)$ we get 
\begin{equation}
(Z.Z)=n(p^{2}-1)^{2}.
\end{equation}

On the other hand, as $Z$ is the divisor of the Hasse invariant on
the given connected component of $\bar{S}$, $div(h_{\bar{\Sigma}})=Z,$
and $h_{\bar{\Sigma}}$ is a global section of $\mathcal{L}^{p^{2}-1},$
we get $\mathcal{O}(Z)=\mathcal{L}^{p^{2}-1}$. From the relation
$(Z.Z)=c_{1}(\mathcal{O}(Z))^{2}$ between the self-intersection number
and the first Chern class, 
\begin{equation}
n=c_{1}(\mathcal{L})^{2}.
\end{equation}
From this and $\mathcal{O}(K_{\bar{S}}+C)\simeq\mathcal{L}^{3}$ we
get $9n=(K_{\bar{S}}+C).(K_{\bar{S}}+C).$ Holzapfel {[}19{]} (4.3.11$^{\prime}$)
on p.184, implies 
\begin{equation}
9n=3c_{2}(X_{\Gamma})=3c_{2}(\bar{X}_{\Gamma})
\end{equation}
proving the theorem when $N$ is large.

We now note that while the assumption of $N$ being large was crucial
for the intersection-theoretic computations, the end result $3n=c_{2}$
holds for a given $N\ge3$ if and only if it holds for any multiple
$N^{\prime}$ of $N.$ Indeed, the covering $S(N^{\prime})\rightarrow S(N)$
is étale, say of degree $d(N,N^{\prime}).$ The second Chern class,
being equal to the Euler characteristic, gets multiplied by $d(N,N^{\prime}).$
But thanks to Theorem \ref{Vollaard}(iv), the same holds true for
the number $n.$ We may therefore deduce the validity of our formula
for $N$ from its validity for $N^{\prime}.$ This completes the proof
of the theorem. \end{proof}

One can probably get a ``mass formula'' for the number of irreducible
components weighted by the reciprocals of the orders of certain automorphism
groups even if $N=1.$ We do not pursue it here.

We easily deduce the following two corollaries.

\begin{corollary} (a) Let $p$ be inert in $\mathcal{K}.$ Then for
$N$ sufficiently large the number of superspecial points on $S$
is 
\begin{equation}
\frac{c_{2}(\overline{S})}{3}\cdot(p^{2}-p+1).
\end{equation}

(b) The aritmetic genus $g_{a}$ of a connected component $Z$ of
$S_{ss}$ is given by 
\begin{equation}
g_{a}(Z)=\frac{c_{2}(\overline{S}_{Z})}{3}\cdot\frac{(p^{4}+p^{2}-2)}{2}+1,
\end{equation}
where $\overline{S}_{Z}$ is the connected component of $\overline{S}$
containing $Z.$ \end{corollary}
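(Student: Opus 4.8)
The final statement to prove is the Corollary with parts (a) and (b), deducing the number of superspecial points and the arithmetic genus of a connected component $Z$ of $S_{ss}$ from Theorem \ref{ss curves}.

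\medskip

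The plan is to extract both formulas purely from the combinatorial/intersection-theoretic data already assembled in the proof of Theorem \ref{ss curves}, under the standing hypothesis that $N$ is large enough that Theorem \ref{Vollaard}(iii) applies. For part (a): by Theorem \ref{Vollaard}(iii) each irreducible component $Z_i$ carries exactly $p^3+1$ superspecial points, and through each superspecial point pass exactly $p+1$ components. Hence if $Z=\bigcup_{i=1}^n Z_i$ is a connected component of $S_{ss}$, a simple double-counting gives the number of superspecial points on $Z$ as $n(p^3+1)/(p+1) = n(p^2-p+1)$. Now invoke Theorem \ref{ss curves}, which gives $n = c_2(\bar S_Z)/3$, and sum over connected components (or just state it for the whole surface since every superspecial point lies on some component). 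This yields $\frac{c_2(\bar S)}{3}\cdot(p^2-p+1)$. I should double-check the divisibility: $p^3+1=(p+1)(p^2-p+1)$, so the count is an integer, as it must be.

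\medskip

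For part (b): the arithmetic genus of the (possibly reducible, nodal) curve $Z$ sitting inside the smooth surface $\bar S_Z$ is computed via the adjunction formula for the (possibly singular) divisor $Z$: $2g_a(Z)-2 = Z.(Z+K_{\bar S})$. All three ingredients were already computed in the proof of Theorem \ref{ss curves}: we have $Z.Z = n(p^2-1)^2$, and $Z.K_{\bar S} = \sum_i Z_i.K_{\bar S} = n\cdot 3(p^2-1)$ using $Z_i.K_{\bar S}=Z_i.(K_{\bar S}+C)=\deg(\mathcal L^3|_{Z_i})=3(p^2-1)$ (the cuspidal divisor $C$ meets $S_{ss}$ trivially since $S_{ss}$ is disjoint from the cusps by Theorem \ref{Vollaard}(i)). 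Therefore $2g_a(Z)-2 = n(p^2-1)^2 + 3n(p^2-1) = n(p^2-1)(p^2+2)$. Substituting $n=c_2(\bar S_Z)/3$ and simplifying $(p^2-1)(p^2+2)=p^4+p^2-2$ gives $g_a(Z) = \frac{c_2(\bar S_Z)}{3}\cdot\frac{p^4+p^2-2}{2}+1$, exactly as claimed.

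\medskip

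The only subtlety — and the step I would be most careful about — is the reduction away from the "$N$ large" hypothesis, or rather the decision not to bother: Theorem \ref{ss curves} itself is stated for general $N\ge 3$, but its proof of the clean formula $3n=c_2$ already handles the descent from large $N'$ to arbitrary $N$ via the étale covering $S(N')\to S(N)$, under which both $c_2$ (as an Euler characteristic) and $n$ (by Theorem \ref{Vollaard}(iv)) scale by the degree. For the present corollary it is cleanest to simply state parts (a) and (b) under the same "$N$ sufficiently large" proviso that appears in part (a)'s statement, since the superspecial-point count and the genus computation both rely on the smoothness of the $Z_i$ and the transversality statements in Theorem \ref{Vollaard}(iii), which require $N$ large. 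So the proof is essentially a two-line bookkeeping exercise assembling already-established numbers, and I would present it as such: invoke the intersection numbers from the proof of Theorem \ref{ss curves}, apply double-counting for (a) and adjunction for (b), and substitute $n=c_2/3$.
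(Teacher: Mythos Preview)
Your proof is correct and follows essentially the same approach as the paper: double-counting of incidences for part (a), and adjunction on the singular curve $Z$ using the already-computed values of $Z.Z$ and $Z.K_{\bar S}$ for part (b). The paper's proof is terser but identical in substance.
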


\begin{proof} (a) Each irreducible component of $S_{ss}$ contains
$p^{3}+1$ points of $S_{ssp}$ and each point of $S_{ssp}$ is shared
by $p+1$ irreducible components.

(b) This follows from the adjunction formula, which for the singular
curve $Z$ takes the form 
\begin{equation}
2g_{a}(Z)-2=Z.(Z+K_{\bar{S}})
\end{equation}
(see {[}27{]}, Chapter 9, Theorem 1.37), using the computations of
$Z.Z$ and $Z.K_{\bar{S}}$ carried out in the proof of the theorem.
\end{proof}

\bigskip{}

\textbf{Bibliography\medskip{}
}

{[}1{]} F. Andreatta, E. Z. Goren: Hilbert modular forms: mod $p$
and $p$-adic aspects, Memoirs A.M.S. \textbf{819, }2005.\medskip{}

{[}2{]} E. Bachmat, E. Z. Goren: On the non ordinary locus in Hilbert-Blumenthal
surfaces, Math. Annalen, \textbf{313} (1999) 475-506.\medskip{}

{[}3{]} J. Bella\"{i}che: Congruences endoscopiques et représentations
Galoisiennes, \emph{Thèse}, Paris XI (Orsay), 2002.\medskip{}

{[}4{]} J. Bella\"{i}che: Relévement des formes modulaires de Picard,
J. London Math. Soc.(2), \textbf{74 }(2006), 13-25.\medskip{}

{[}5{]} G. Boxer, \emph{Ph.D. Thesis, }Harvard, to appear.\medskip{}

{[}6{]} O.Bültel, T.Wedhorn: Congruence relations for Shimura varieties
associated to some unitary groups, J. Instit. Math. Jussieu \textbf{5},
(2006) 229-261.\medskip{}

{[}7{]} J. Cogdell: Arithmetic cycles on Picard modular surfaces and
modular forms of nebentypus, J. Reine Angew. Math. \textbf{357} (1985),
115-137.\medskip{}

{[}8{]} P. Deligne: Travaux de Shimura, Sém. Bourbaki \textbf{389}
(1971).\medskip{}

{[}9{]} P. Deligne, M. Rapoport: Les schémas de modules de courbes
elliptiques, \emph{in:} \emph{Modular Functions of One Variable} II,
LNM \textbf{349} (1973), 143-316.\medskip{}

{[}10{]} M. Demazure: \emph{Lectures on} $p$\emph{-divisible groups},
LNM \textbf{302}, Springer, 1972.\medskip{}

{[}11{]} G. Faltings, C.-L. Chai: \emph{Degeneration of abelian varieties,
}Springer, 1990.\medskip{}

{[}12{]} W. Goldring, M.-H. Nicole: A $\mu$-ordinary Hasse invariant,
\emph{preprint}, arXiv:1302.1614.\medskip{}

{[}13{]} B. Gordon: Canonical models of Picard modular surfaces, \emph{in:
}{[}24{]}, 1-29.\medskip{}

{[}14{]} E. Z. Goren: Hasse invariants for Hilbert modular varieties,
Israel J. Math. \textbf{122} (2001), 157-174.\medskip{}

{[}15{]} E. Z. Goren, F. Oort: Stratification of Hilbert modular varieties,
J. Alg. Geom. \textbf{9} (2000), 111-154.\medskip{}

{[}16{]} A. Grothendieck: \emph{Groupes de Barsotti-Tate et cristaux
de Dieudonné}, Les Presses de l'Université de Montréal, 1974.\medskip{}

{[}17{]} M. Harris: Arithmetic vector bundles and automorphic forms
on Shimura varieties. I, Invent. Math. \textbf{82} (1985), 151-189.\medskip{}

{[}18{]} R. Hartshorne: \emph{Algebraic Geometry,} Graduate Texts
in Mathematics, No. \textbf{52}, Springer, 1977.\medskip{}

{[}19{]} R.-P. Holzapfel: \emph{Ball and surface arithmetics}, Aspects
of Mathematics, \textbf{E29}, Vieweg \& Sohn, 1998.\medskip{}

{[}20{]} N. Katz, T. Oda: On the differentiation of de Rham cohomology
classes with respect to parameters, J. Math. Kyoto Univ. \textbf{8}
(1968), 199-213.\medskip{}

{[}21{]} J.-S. Koskivirta: Sections of the Hodge bundle over Ekedahl
Oort strata of Shimura varieties of Hodge type, \emph{preprint}, arXiv:1410.1317.\medskip{}

{[}22{]} J.-S. Koskivirta, T. Wedhorn: Hasse invariants for Shimura
varieties of Hodge type, \emph{preprint,} arXiv:1406.2178.\medskip{}

{[}23{]} K.-W. Lan: Arithmetic compactifications of PEL-type Shimura
varieties, London Mathematical Society Monographs \textbf{36}, Princeton,
2013.\medskip{}

{[}24{]} R. Langlands, D. Ramakrishnan: \emph{The zeta functions of
Picard modular surfaces}, Univ. Montréal, Montréal, 1992.\medskip{}

{[}25{]} M. Larsen: Unitary groups and $l$-adic representations,
\emph{Ph.D. Thesis}, Princeton University, 1988.\medskip{}

{[}26{]} M. Larsen: Arithmetic compactification of some Shimura surfaces
\emph{in}: {[}24{]}, 31-45.\medskip{}

{[}27{]} Q. Liu: \emph{Algebraic geometry and arithmetic curves},
Oxford University Press, 2002.\medskip{}

{[}28{]} J. Milne: \emph{Introduction to Shimura varieties,} available
on the web at:\linebreak{}
www.jmilne.org/math/xnotes/svi.pdf.\medskip{}

{[}29{]} T. Oda: On the first de Rham cohomology group and Dieudonné
modules, Ann. Sci. ENS \textbf{2 }(1969), 63-135.\medskip{}

{[}30{]} G. Shimura: The arithmetic of automorphic forms with respect
to a unitary group, Ann. of Math. \textbf{107} (1978), 569-605.\medskip{}

{[}31{]} I. Vollaard: The supersingular locus of the Shimura variety
for $GU(1,s)$, Canad. J. Math. \textbf{62} (2010), 668-720.\medskip{}

{[}32{]} T. Wedhorn: The dimension of Oort strata of Shimura varieties
of PEL-type, \emph{in: Moduli of abelian varieties,} Progr. Math.
\textbf{195}, 441-471, Birkhäuser, 2001.\medskip{}

\end{document}